\numberwithin{equation}{section}
\newtheorem{thm}{Theorem}[section]
\newtheorem{corollary}[thm]{Corollary}
\newtheorem{lem}[thm]{Lemma}
\newtheorem{prop}[thm]{Proposition}
\theoremstyle{definition}
\newtheorem{defn}[thm]{Definition}
\theoremstyle{remark}
\newtheorem{rem}[thm]{\bf{Remark}}
\newtheorem{exam}[thm]{Example}{\bfseries}{\itshape}
\newcommand\bE{\mathbb{E}}
\newcommand\bH{\mathbb{H}}
\newcommand\bL{\mathbb{L}}
\newcommand\bN{\mathbb{N}}
\newcommand\bP{\mathbb{P}}
\newcommand\bR{\mathbb{R}}
\newcommand\bZ{\mathbb{Z}}
\newcommand\fO{\mathbf{O}}
\newcommand\cD{\mathcal{D}}
\newcommand\cE{\mathcal{E}}
\newcommand\cF{\mathcal{F}}
\newcommand\cK{\mathcal{K}}
\newcommand\cO{\mathcal{O}}
\newcommand\cQ{\mathcal{Q}}
\newcommand\cS{\mathcal{S}}
\newcommand\cT{\mathcal{T}}
\newcommand\frH{\mathfrak{H}}
\begin{document}

\title[Nonlocal Dirichlet problems with exterior condition]{Weighted Sobolev space theory for non-local elliptic and parabolic equations with non-zero exterior condition on $C^{1,1}$ open sets}

\author{Kyeong-Hun Kim$^1$}
\address{$^{1}$ Department of Mathematics, Korea University, 145 Anam-ro, Seongbuk-gu, Seoul,
02841, Republic of Korea}
\email{kyeonghun@korea.ac.kr}

\author{Junhee Ryu$^{*2}$}
\address{$^2$ School of Mathematics, Korea Institute for Advanced Study, 85 Hoegi-ro, Dongdaemun-gu, Seoul, 02455, Republic of Korea}
\email{junhryu@kias.re.kr}
\thanks{$^*$Corresponding author: Junhee Ryu}

\thanks{K.-H. Kim was supported by the National Research Foundation of Korea(NRF) grant funded by the Korea government(MSIT) (No. RS-2025-00556160). J. Ryu was supported by a KIAS Individual Grant
(MG101501) at Korea Institute for Advanced Study.}

\subjclass[2020]{35B65, 35S16, 47G20}

\keywords{Non-local equations, fractional Laplacian, Dirichlet problem, Sobolev regularity theory, boundary regularity, H\"older estimates}

\begin{abstract}
We introduce a weighted Sobolev space theory for the non-local elliptic equation
$$
\Delta^{\alpha/2}u=f, \quad  x\in \mathcal{O}\,; \quad r_{\overline{\mathcal{O}}^c}u=g
$$
as well as for the non-local parabolic equation
$$
u_t=\Delta^{\alpha/2}u+f, \quad t>0,\, x\in \mathcal{O} \,; \quad r_{\mathcal{O}}u(0,\cdot)=u_0, \,r_{(0,T)\times \overline{\mathcal{O}}^c}u=g.
$$
Here, $\alpha\in (0,2)$ and $\mathcal{O}$ is a $C^{1,1}$ open set.
We prove uniqueness and existence results in weighted Sobolev spaces. We measure the Sobolev and H\"older regularities of arbitrary order derivatives of solutions using a system of weights consisting of appropriate powers of the distance to the boundary.  

 One of the most interesting features of our results is that, unlike the classical results in Sobolev spaces without weights, the weighted regularities of solutions in $\mathcal{O}$ are less affected by those of exterior conditions on $\overline{\mathcal{O}}^c$. For instance, even if $g=\delta_{x_0}$, the dirac delta distribution concentrated at $x_0\in \overline{\mathcal{O}}^c $, the solution to the elliptic equation given with $f=0$ is infinitely differentiable in $\mathcal{O}$, and  for any $k=0,1,2, 3,\cdots$, $\varepsilon>0$, and $\delta\in (0,1)$, it holds that
$$
|d_x^{-\frac{\alpha}{2}+\varepsilon+k}D^k_xu|_{C_b(\mathcal{O})}
+|d_x^{-\frac{\alpha}{2}+\varepsilon+k+\delta} D^k_xu|_{C^{\delta}(\mathcal{O})}<\infty,
$$
where $d_x=dist(x, \partial \mathcal{O})$.
\end{abstract}

\maketitle

\section{Introduction}

We present a weighted Sobolev space theory for  the elliptic equation
\begin{equation} \label{elliptic eqn}
\begin{cases}
\Delta^{\alpha/2}u(x)=f(x),\quad &x\in \cO,
\\
u(x)=g(x),\quad &x\in \overline{\cO}^c,
\end{cases}
\end{equation}
and for the parabolic equation
\begin{equation} \label{parabolic eqn}
\begin{cases}
\partial_t u(t,x)=\Delta^{\alpha/2}u(t,x)+f(t,x),\quad &(t,x)\in(0,T)\times \cO,
\\
u(0,x)=u_0(x),\quad & x\in \cO,
\\
u(t,x)=g(t,x),\quad &(t,x)\in (0,T)\times \overline{\cO}^c,
\end{cases}
\end{equation}
  where $\alpha\in (0,2)$, $\cO$ is a $C^{1,1}$ open set, $\overline{\cO}^c$ is the complement of the closure of $\cO$, and the fractional Laplacian $\Delta^{\alpha/2}u$ is defined as 
\begin{equation}
  \label{def frac}
  \Delta^{\alpha/2} u(x) :=c_d\lim_{\varepsilon \downarrow 0}\int_{|y|>\varepsilon} \frac{u(x+y)-u(x)}{|y|^{d+\alpha}}dy
  \end{equation}
with $c_d:=\frac{2^{\alpha}\Gamma(\frac{d+\alpha}{2})}{\pi^{d/2}|\Gamma(-\alpha/2)|}$. One can easily check that \eqref{def frac} is well defined if $u\in C^2_b(\bR^d)$. Generally, if $u$ is a distribution on $\bR^d$, then on $\cO$ we interpret  $\Delta^{\alpha/2}u$  as the distribution (cf. \cite[Section 3]{BB}) defined by
$$
(\Delta^{\alpha/2}u, \phi)_{\cO}=(u, \Delta^{\alpha/2}\phi)_{\bR^d}, \qquad \phi\in C^{\infty}_c(\cO),
$$
given that the right hand side makes sense and defines a distribution on $\cO$. For instance, consider $u:=\delta_{x_0}$, where $x_0\in \overline{\cO}^c$, then for any $\phi \in C_c^\infty(\cO)$,
$$
(\Delta^{\alpha/2}u,\phi)_{\cO}= \Delta^{\alpha/2}\phi(x_0)= c_d \int_{\cO} \frac{\phi(x)}{|x-x_0|^{d+\alpha}}dx.
$$
Thus in the distribution sense, $u=\delta_{x_0}$ satisfies
\begin{equation*}
\begin{cases}
\Delta^{\alpha/2}u(x) = c_d|x-x_0|^{-d-\alpha},\quad &x\in \cO,
\\
u=\delta_{x_0},\quad &x\in \overline{\cO}^c.
\end{cases}
\end{equation*}

In this article, we study equations \eqref{elliptic eqn} and \eqref{parabolic eqn} in the weighted Sobolev spaces $H^{\gamma}_{p,\theta,\sigma}(\fO)$ and $L_p((0,T); H^{\gamma}_{p,\theta,\sigma}(\fO))$, respectively.  Here $\fO$ is either $\cO$ or $\overline{\cO}^c$, $p>1$ and  $\theta,\sigma, \gamma \in \bR$. For instance, if $\gamma=0,1,2,\cdots$, then
$$
\|u\|_{H^{\gamma}_{p,\theta,\sigma}(\cO)}=\left(\sum_{k=0}^{\gamma} \int_{\cO} |d^{k}_x D_x^{k}u|^p d_x^{\theta-d} (1+d_x)^{\sigma}dx \right)^{1/p},
$$
where $d_x$ denotes the distance from $x$ to $\partial \cO$. Here, the powers of $d_x$ and $(1+d_x)$ are used to control  the behaviors of functions near the boundary and at infinity, respectively. These types of spaces were introduced in Section 2.6.3 of \cite{LM} and Section 23 of \cite{E} for $p=2$, $\theta=d$ and $\sigma=0$. In \cite{Krylovhalf,Lototsky}, they were generalized for $p\in(1,\infty)$, $\theta,\gamma\in\bR$ and $\sigma=0$. 
We also refer the reader to \cite{CD} for anisotropic weighted Bessel potential spaces with results for general pseudodifferential operators.
In Section \ref{Secfunc}, we use a unified way introduced in \cite{Krylovhalf,Lototsky} to extend the spaces for all $\sigma\in \bR$.

The equations were studied in a similar setting in  \cite{Dirichlet} provided that  the exterior data is zero. 
The main contribution of this article is to prove that   the weighted regularities of solutions are barely affected by those of the exterior conditions.
 To be more precise,  for elliptic equation \eqref{elliptic eqn}, we prove
\begin{equation}
 \label{eqn 4.11.1}
\|u\|_{H_{p,\theta-\alpha p/2, 0}^{\gamma}(\cO)} \leq N\left(\|f\|_{H^{\gamma-\alpha}_{p,\theta+\alpha p/2,0}(\cO)}+ \|g\|_{H_{p,\theta-\alpha p/2,\sigma}^{\lambda}(\overline{\cO}^c)}\right),
\end{equation}
provided that $\theta \in (d-1, d-1+p)$ and $\sigma>-\theta-\alpha p/2$. Here $\gamma, \lambda \in \bR$ have no relation at all, and $N$ is a constant depending  also on $\gamma$ and $\lambda$. As an example, we can show if $x_0\in \overline{\cO}^c$, then 
$$
g:=\delta_{x_0} \in H^{\lambda}_{p,\theta-\alpha p/2,\sigma}(\overline{\cO}^c), \qquad \lambda < -(p-1)d/p.
$$
Consequently, if $f=0$, then the left side of \eqref{eqn 4.11.1} is finite for any $\gamma\in \bR$ and $p>1$.

Now we summarize the main results of this article.

\begin{enumerate}

\item[$(i)$] Together with the uniqueness and existence results, we prove  \eqref{eqn 4.11.1} for any given $\gamma, \lambda \in \bR$, $p>1$, $\theta\in (d-1,d-1+p)$ and $\sigma>-\theta-\alpha p/2$. The range of $\theta$ is sharp.  We are dealing with arbitrary positive or negative order regularity theory, and the parameter $\lambda$ corresponding to the regularity of exterior data $g$  has no impact on the regularity of the solution. Our results are new even if $g=0$ because \cite{Dirichlet} does not deal with negative regularity theory.  

\vspace{2mm}

\item[$(ii)$] We prove a parabolic version of  $(i)$ for parabolic equation  \eqref{parabolic eqn}.

\vspace{2mm}

\item[$(iii)$] We obtain weighted H\"older estimates of solutions in $\cO$ for both elliptic and parabolic equations.  These are based on   weighted Sobolev-H\"older embedding theorems described in Propositions  \ref{Holder para} and \ref{Holder ellip}. 

\vspace{2mm}

\item[$(iv)$]  We obtain a global regularity on $\bR^d$ in Sobolev spaces without weight.  For this,  we prove some embedding results from weighted Sobolev spaces into Sobolev spaces on $\bR^d$ without weight, and prove 
\begin{equation}
\label{eqn 4.12.1}
\|u\|_{H^{\beta}_p(\bR^d)}\leq N \left(\|f\|_{H^{\beta-\alpha}_{p}(\cO)}+\|g\|_{H^{\beta}_p(\overline{\cO}^c)} \right)
\end{equation}
for elliptic equation \eqref{elliptic eqn}.  Here, $ \frac{\alpha}{2}+ \frac{1}{p}-1<\beta < \frac{\alpha}{2}+ \frac{1}{p}$.
 We also prove a parabolic version of \eqref{eqn 4.12.1} for the parabolic equation \eqref{parabolic eqn}. 
Our approach is different from \cite{Grubb nonlocal} where \eqref{eqn 4.12.1} was already introduced on smooth domains.
 \end{enumerate}

Now we give a short review on related works. For the results on the equations with zero-exterior condition, we only  refer to the  introductions of \cite{AG23,Dirichlet,Grubb nonlocal,Grubb para,Grubb resol} and references therein. Regarding works dealing with non-zero exterior data, we first  refer to \cite{Grubb nonlocal} which includes results in classical Sobolev spaces together with some optimal  H\"older estimates. These results certainly cover \eqref{eqn 4.12.1} on $C^{\infty}$ domains,  which can be extended to domains with $C^{1,\tau}$ regularity for $\tau>\alpha$ when combined with the results of \cite{AG23}. The idea of \cite{Grubb nonlocal} is just to reduce elliptic problem \eqref{elliptic eqn} into the problem having $g=0$. More precisely, let $G\in H_p^{\beta}(\bR^d)$ be an extension of $g\in H^{\beta}_p(\overline{\cO}^c)$, then $v:=u-G$ satisfies
\begin{equation*}
\begin{cases}
\Delta^{\alpha/2}v(x) = f-\Delta^{\alpha/2}G,\quad &x\in \cO,
\\
v=0,\quad &x\in \overline{\cO}^c.
\end{cases}
\end{equation*}
To apply \eqref{eqn 4.12.1} for $v$ with $g=0$, $\Delta^{\alpha/2}G$ should be as smooth as $f$. This approach is no use for us because our exterior data $g$ can be extremely rough, that is it can belong to $H^{\lambda}_{p,\theta-\alpha p/2,\sigma}(\overline{\cO}^c)$ with $\lambda \approx -\infty$, without hurting the regularity of the solution. 
 We next refer to \cite{BCI} (resp. \cite{GKL}) for the existence and uniqueness  results of viscosity (resp. distributional) solution of  non-local elliptic equations.
Also see \cite{FKV,HJ,R} for $L_2$-solvability results of non-local elliptic equations. Finally we refer to \cite{Bogdan trace,Bogdanlptr,Kasstr,Grube} for the extension and trace problems for non-local elliptic operators.
For instance, in \cite{Bogdan trace}, the following Douglas-type formula for the quadratic form of the Poisson extension is introduced; for given $g:\cO^c\to\bR$,
\begin{align*}
&\int\int_{\bR^d\times\bR^d\setminus \cO^c\times \cO^c} (P_{\cO}g(x)-P_{\cO} g(y))^2 \nu(x,y)dxdy
\\
&= \int\int_{\cO^c\times \cO^c} (g(w)-g(z))^2\gamma_{\cO}(z,w)dwdz,
\end{align*}
where $\cO^c:=\bR^d \setminus \cO$.
Here, $\nu(x,y)$ is a unimodal L\'evy measure, $P_\cO$ is the Poisson kernel associated to the operator generated by $\nu$, and $\gamma_\cO$ is the interaction kernel (see (2.8) in \cite{Bogdan trace}). In particular, if $\nu(x,y)=c_d |x-y|^{-d-\alpha}$, then $P_\cO g(x)$ is a solution to \eqref{elliptic eqn} with $f=0$.

\section{Main results} \label{Main results}

Throughout this article, we assume that $\cO$ is either a half space $\bR^d_+$ or a bounded $C^{1,1}$ open set.

\subsection{Function spaces} \label{Secfunc}

First we introduce notations used in this article. As usual $\bR^d$ stands for the Euclidean space of points $x=(x^1,\dots,x^d)$, 
$\bR^d_+=\{(x^1,\dots,x^d)\in\bR : x^1>0\}$ is a half space, and 
$B_r(x)=\{y\in\bR^d : |x-y|<r\}$.
We use $``:="$ or $``=:"$ to denote a definition.  $\bN$ and $\bZ$ denote the natural number system and the integer number system, respectively.  We denote $\bN_+:=\bN\cup\{0\}$. For nonnegative functions $f$ and $g$, we write $f(x)\approx g(x)$  if there exists a constant $N>0$, independent of $x$,  such that $N^{-1} f(x)\leq g(x) \leq N f(x)$. For  multi-indices $\beta=(\beta_1,\cdots,\beta_d)$, $\beta_i\in\bN_+$, and functions $u(x)$ depending on $x$,
$$
 D_iu(x):=\frac{\partial u}{\partial x^i},\quad  D^{\beta}_xu(x):=D^{\beta_d}_{d}\cdots D_1^{\beta_1}u(x).
$$
We also use  $D^n_xu$ to denote   the  partial derivatives of order $n\in\bN_+$ with respect to the space variables.  By $\cF$ and $\cF^{-1}$ we denote the $d$-dimensional Fourier transform and the inverse Fourier transform respectively,
 i.e.,
$$
\cF[f](\xi):=\int_{\bR^d} e^{-i\xi\cdot x} f(x) dx, \quad  \cF^{-1}[f](x):=\frac{1}{(2\pi)^d}\int_{\bR^d} e^{i\xi\cdot x} f(\xi) d\xi.
$$
For an open set $U\subset \bR^d$, $C_b(U)$ denotes the space of continuous functions $u$ in $U$ such that $|u|_{C_b(U)}:=\sup_U |u(x)|<\infty$. 
By $C^n_b(U)$ we denote the space of functions whose derivatives of order up  to $n$ are in $C_b(U)$. Here we drop $U$ if $U=\bR^d$.   
For an open set $V\subset \bR^d$, where $d\in \bN$, by $C_c^\infty(V)$ we denote the space of infinitely differentiable functions with compact support in $V$. Here, we extend a function $u\in C_c^\infty(V)$ to all of $\bR^d$ by letting $f(x)=0$ on $V^c$, if necessary.
For  a Banach space $F$ and $\delta\in (0,1]$,   $C^{\delta}(V;F)$ denotes the space of $F$-valued continuous functions $u$ on $V$  such that
\begin{eqnarray*}
    |u|_{C^{\delta}(V;F)}&:=&|u|_{C_b(V;F)}+[u]_{C^{\delta}(V;F)}
    \\
    &:=&
     \sup_{x\in V}|u(x)|_F+\sup_{x,y\in V}\frac{|u(x)-u(y)|_F}{|x-y|^{\delta}}<\infty.
\end{eqnarray*}
Also, for  $p>1$ and a measure $\mu$ on $V$, $L_p(V, \mu; F)$  denotes the set of $F$-valued Lebesgue measurable functions $u$ such that 
$$
\|u\|_{L_p(V, \mu; F)}:=\left(\int_V |u|^p_F \,d\mu\right)^{1/p}<\infty.
$$
 We drop $F$ and $\mu$ if $F=\bR$ and $\mu$ is the Lebesgue  measure.
 For a Banach space $F$, the dual space of $F$ is denoted by $F^*$.
By  $\cD'(U)$, where $U$ is an open set in $\bR^d$, 
we denote the space of all distributions on $U$, and  for given $f\in \cD'(U)$, the action of $f$ on $\phi \in C_c^\infty(U)$ is denoted by
$$
( f, \phi)_{U} :=f(\phi).
$$
 For open sets $V\subset U\subset \bR^d$ and $f\in \cD'(U)$, the restriction of $f$ to $V$ is denoted by $r_Vf$. For $\cD'(\bR^d)$-valued function $g$ defined on $(0,T)$, $r_{(0,T)\times U}g(t,\cdot):=r_U(g(t))(\cdot)$ for each $t\in (0,T)$. The extension by zero from $U\subset \bR^d$  to $\bR^d$ is denoted by $e_{U}$.
Finally, if  we write $N=N(a,b,\cdots)$, then this means that the constant $N$ depends only on $a,b,\cdots$.

Now we recall Sobolev and Besov spaces on $\bR^d$.
For $p\in(1,\infty)$ and $\gamma\in\bR$,  the Sobolev space $H_p^\gamma=H_p^{\gamma}(\bR^d)$ is defined as the space of all tempered distributions $f$ on $\bR^d$ satisfying
$$
\|f\|_{H_p^{\gamma}}:=\|(1-\Delta)^{\gamma/2}f\|_{L_p}<\infty,
$$
where
$$
(1-\Delta)^{\gamma/2} f(x) := \cF^{-1} \left[(1+|\cdot|^2)^{\gamma/2}\cF [f] \right](x).
$$
For $T\in(0,\infty)$, we define
$$\bH_p^{\gamma}(T):=L_p((0,T);H_p^{\gamma}),\quad \bL_p(T):=\bH_p^0(T)=L_p((0,T);L_p).
$$
Next we take a smooth function $\Psi$ whose Fourier transform $ \cF[\Psi]$ is infinitely differentiable, supported in an annulus $\{\xi\in\bR^d : \frac{1}{2} \leq |\xi| \leq 2\}$, $\cF[\Psi]\geq0$ and
$$
\sum_{j\in \bZ} \cF[\Psi](2^{-j}\xi)=1, \qquad \forall \, \xi\neq0.
$$
For a tempered distribution $f$ and $j\in \bZ$, we denote
$$
\Delta_j f(x):=\cF^{-1}\left[\cF[\Psi](2^{-j}\cdot)\cF [f]\right](x), \qquad 
S_0 f(x):= \sum_{ j=-\infty}^0 \Delta_j f(x).
$$
 The Besov space $B_p^\gamma=B_p^\gamma(\bR^d)$, where $p>1, \gamma\in \bR$,  is defined as the space of all tempered distributions $f$ satisfying
$$
\|f\|_{B_p^\gamma}:=\| S_0 f\|_{L_p} + \left(\sum_{j=1}^\infty 2^{\gamma p j} \| \Delta_j f \|_{L_p}^p \right)^{1/p} < \infty.
$$

Now, we introduce weighted spaces on $\fO \subset\bR^d$, where $\fO$ is either $\cO$ or $\overline{\cO}^c$.  Here, $\overline{\cO}^c$ is the complement of the closure of $\cO$.
We denote $d_x:=d_{\cO,x}:=dist (x, \partial \cO)$ and 
 $$L_{p,\theta,\sigma}(\fO):=L_p(\fO, d_x^{\theta-d} (1+d_x)^{\sigma}dx)$$
 and
 $$
 H^{n}_{p,\theta,\sigma}(\fO):=\{u: u, d_x D_xu, \cdots, d_x^n D^n_xu \in L_{p,\theta, \sigma}(\fO)\}
 $$
   for any  $p>1, \theta,\sigma\in \bR$ and $n\in \bN_+$. The norm in this space is defined as
\begin{align} \label{21.10.06.0918}
\|u\|_{H_{p,\theta,\sigma}^{n}(\fO)}=\sum_{|\beta|\leq n} \left( \int_{\fO}|d_x^{|\beta|}D_x^{\beta}u(x)|^p d_x^{\theta-d}(1+d_x)^{\sigma} dx\right)^{1/p}.
\end{align}

Next, we generalize these spaces to arbitrary order weighted Sobolev and Besov spaces.
Take a sequence of nonnegative functions $\zeta_n \in C^{\infty}_c (\fO), n\in \bZ$,  having the following properties:
\begin{align} 
    &(i)\,\,supp (\zeta_n) \subset \{x\in \fO : k_1e^{-n}< d_x <k_2e^{-n}\}, \quad k_2>k_1>0, \label{zeta prop 1}
    \\
    &(ii)\,\,\sup_{x\in\bR^d}|D^m_x \zeta_n (x)| \leq N(m)e^{mn},\quad \forall m\in\bN_+ \label{zeta prop 2}
    \\
    &(iii)\,\,\sum_{n\in\bZ} \zeta_n(x) > c>0,\quad\forall x\in \fO. \label{zeta prop 3}
\end{align}
If $\{x\in \fO: k_1e^{-n}< d_x <k_2e^{-n}\}=\emptyset$ , we just take $\zeta_n=0$. One can easily construct such functions by mollifying indicator functions of the sets of the type $\{x\in\fO:k_3e^{-n}<d_x<k_4e^{-n}\}$.
We remark that the constant $e$ in \eqref{zeta prop 1} and \eqref{zeta prop 3} is not crucial. In other words, one may assume that $supp (\zeta_n) \subset \{x\in \fO : k_1c^{-n}< d_x <k_2c^{-n}\}$ with $c>0$, instead of \eqref{zeta prop 1}. However, in this paper, the constant $e$ is used for consistency with \cite{Dirichlet}.

Now we define weighted Sobolev spaces $H^{\gamma}_{p,\theta,\sigma}(\fO)$ and weighted Besov spaces $B^{\gamma}_{p,\theta}(\fO)$ for any $\gamma, \theta\in \bR$ and $p>1$. Note that for any distribution $u$ on $\fO$, one can define $\zeta_{-n}u$ as a distribution on $\bR^d$. Obviously, the action of $\zeta_{-n}u$ on $C_c^\infty(\bR^d)$ is defined as
$$
(\zeta_{-n}u,\phi)_{\bR^d}=(u,\zeta_{-n}\phi)_{\fO}, \quad \forall \phi\in C_c^\infty(\fO).
$$
By $H_{p,\theta,\sigma}^{\gamma}(\fO)$ and $B_{p,\theta}^{\gamma}(\fO)$, we denote the sets of  distributions $u$ on $\fO$ such that
\begin{equation}
\label{def sobolev}
\|u\|_{H_{p,\theta,\sigma}^{\gamma}(\fO)}^p:=\sum_{n\in\bZ}e^{n\theta} (1+e^n)^{\sigma} \|\zeta_{-n}(e^n\cdot)u(e^n\cdot)\|_{H_p^{\gamma}}^p<\infty,
\end{equation}
and
\begin{equation*}
\|u\|_{B_{p,\theta}^{\gamma}(\fO)}^p:=\sum_{n\in\bZ}e^{n\theta}  \|\zeta_{-n}(e^n\cdot)u(e^n\cdot)\|_{B_{p}^{\gamma}}^p<\infty,
\end{equation*}
respectively. We also denote $H^{\gamma}_{p,\theta}(\fO):=H^{\gamma}_{p,\theta,0}(\fO)$.
  One can easily check that those spaces are independent of choice of $\{\zeta_n\}$ (see e.g. \cite[Proposition 2.2]{Lototsky}). More precisely,  if $\{\zeta_{n}\}$ satisfies \eqref{zeta prop 1} and \eqref{zeta prop 2}, then
  $$
\sum_{n\in\bZ} e^{n\theta} (1+e^n)^{\sigma} \|\zeta_{-n}(e^n\cdot)u(e^n\cdot) \|_{H_p^\gamma}^p \leq N \|u\|^p_{H_{p,\theta,\sigma}^{\gamma}(\fO)}.
  $$
  The reverse inequality holds if $\{\zeta_{n}\}$ additionally satisfies \eqref{zeta prop 3}. Furthermore, if $\gamma=n \in \bN_+$,  then  the norms defined in \eqref{21.10.06.0918}  and \eqref{def sobolev} are equivalent.
Also, the similar properties are valid for the spaces $B_{p,\theta}^{\gamma}(\fO)$.

The weighted Besov space $B_{p,\theta}^{\gamma}(\fO)$ is introduced as above to handle the initial data, based on results in the whole space $\bR^d$. It is known that, for the parabolic Sobolev space given on $(0,T)\times \bR^d$, the trace space for initial data is the Besov space $B_p^\gamma(\bR^d)$ (see \cite[Theorem 1.8.2]{TrIn}). From this, $B_{p,\theta}^{\gamma}(\fO)$ naturally arises in accordance with \eqref{def sobolev}.

Next, we choose (cf. \cite{KK2004}) an infinitely differentiable function $\psi$ in $\bR^d\setminus \partial \cO$ such that $\psi\approx d_x$ and for any $m\in \bN_+$
\begin{align*}
\sup_{\bR^d\setminus\partial \cO} |d_x^m D^{m+1}_x \psi(x)|\leq N(m)<\infty.
\end{align*}
For instance, one can take $\psi(x):=\sum_{n\in\bZ} e^{-n} \zeta_n(x)$ on $\fO$ and similarly define $\psi$ on $\overline{\fO}^c$.

Below we collect some basic properties of the spaces $H^{\gamma}_{p,\theta,\sigma}(\fO)$ and $B_{p,\theta}^\gamma(\fO)$. For $\nu_1,\nu_2\in \bR$, we write $u\in \psi^{-\nu_1}(1+\psi)^{-\nu_2} H_{p,\theta,\sigma}^{\gamma}(\fO)$ (resp. $u\in \psi^{-\nu_1} B_{p,\theta}^{\gamma}(\fO)$) if $\psi^{\nu_1} (1+\psi)^{\nu_2} u \in  H_{p,\theta,\sigma}^{\gamma}(\fO)$ (resp. $\psi^{\nu_1}  u \in B_{p,\theta}^{\gamma}(\fO)$).

\begin{lem}\label{lem space}
Let $\gamma,\theta,\sigma \in \bR$ and $p\in(1,\infty)$.

(i) The space $C^{\infty}_c(\fO)$ is dense in $H^{\gamma}_{p,\theta,\sigma}(\fO)$ and $B^{\gamma}_{p,\theta}(\fO)$.

(ii) For each $i=1,2,\cdots, d$, the operator $D_i$ is a bounded operator from $H^{\gamma}_{p,\theta-p,\sigma}(\fO)$ (resp. $B^{\gamma}_{p,\theta-p}(\fO)$) to $H^{\gamma-1}_{p,\theta,\sigma}(\fO)$ (resp. $B^{\gamma-1}_{p,\theta}(\fO)$).

(iii) For $\delta_1,\delta_2\in\bR$, $H_{p,\theta,\sigma}^\gamma (\fO) = \psi^{\delta_1} (1+\psi)^{\delta_2} H_{p,\theta+\delta_1 p,\sigma+\delta_2 p}^\gamma(\fO)$ and $B_{p,\theta}^\gamma (\fO) = \psi^{\delta_1} B_{p,\theta+\delta_1 p}^\gamma(\fO)$. Moreover,
\begin{align*}
&\|u\|_{H_{p,\theta,\sigma}^\gamma (\fO)}\approx \|\psi^{-\delta_1} (1+\psi)^{-\delta_2} u\|_{H_{p,\theta+\delta_1 p,\sigma+\delta_2 p}^\gamma(\fO)}, 
\end{align*}
and
\begin{align*}
&\|u\|_{B_{p,\theta}^\gamma (\fO)} \approx \|\psi^{-\delta_1} u\|_{B_{p,\theta+\delta_1 p}^\gamma(\fO)}.
\end{align*}

(iv) (Duality) Let $1/p+1/p'=1, \theta/p+\theta'/p' = d$, and $\sigma/p+\sigma'/p'=0$.
Then, the dual spaces of $H_{p,\theta,\sigma}^\gamma(\fO)$ and $B_{p,\theta}^\gamma(\fO)$ are $H_{p',\theta',\sigma'}^{-\gamma}(\fO)$ and $B_{p',\theta'}^{-\gamma}(\fO)$, respectively.
 Moreover, for $u\in H_{p,\theta,\sigma}^\gamma(\fO)$ (resp. $u\in B_{p,\theta}^\gamma(\fO)$), $(u,\phi)_{\fO}$, defined on $C_c^\infty(\fO)$, can be extended by continuity to $H_{p',\theta',\sigma'}^{-\gamma}(\fO)$ (resp.$B_{p',\theta'}^{-\gamma}(\fO)$).

(v) (Sobolev-H\"older embedding)

Let
$\gamma-\frac{d}{p} \geq n+\delta$ for some $n\in\bN_+$ and $\delta\in(0,1)$. Then for any $k\leq n$,
\begin{eqnarray*}
    &|\psi^{k+\frac{\theta}{p}} (1+\psi)^{\frac{\sigma}{p}} D_x^k u|_{C_b(\fO)}+[\psi^{n+\frac{\theta}{p}+\delta} (1+\psi)^{\frac{\sigma}{p}} D_x^{n}u]_{C^{\delta}(\fO)}
    \\
    &\leq N(d,\gamma,p,\theta,\sigma)\|u\|_{H_{p,\theta,\sigma}^{\gamma}(\fO)}.
\end{eqnarray*}
\end{lem}

\begin{proof}
We only treat the special case $H_{p,\theta}^\gamma(\fO)=H_{p,\theta,0}^\gamma(\fO)$ since the proof for general spaces goes the same way.

When $\cO$ is a half space, the claims are proved by Krylov in \cite{KrylovSome}, and those are generalized by Lototsky in \cite{Lototsky} for arbitrary domains. See Proposition 2.2 and 2.4, and Theorems 4.1 and 4.3 in \cite{Lototsky}.
Here we remark that those in \cite{Lototsky} are still valid for general bounded $C^{1,1}$ open sets or their complements.
 The lemma is proved.
\end{proof}

\begin{rem} \label{rem10081613}
For $\sigma\in \bR$, we have $u\in H^{\gamma}_{p,\theta,\sigma}(\fO)$ if and only if $(1+\psi)^{\sigma/p}u \in H^{\gamma}_{p,\theta}(\fO)$, and 
\begin{equation*}
\|u\|_{H^{\gamma}_{p,\theta,\sigma}(\fO)} \approx \|(1+\psi)^{\sigma/p}u\|_{H^{\gamma}_{p,\theta}(\fO)}.
\end{equation*}
Also, using \eqref{def sobolev}, one can easily check $H^{\gamma}_{p,\theta,\sigma_2}(\fO) \subset H^{\gamma}_{p,\theta,\sigma_1}(\fO)$ if $\sigma_1<\sigma_2$.
 In particular, for any $\sigma<0$, $H^{\gamma}_{p,\theta}(\fO)\subset H^{\gamma}_{p,\theta,\sigma}(\fO)$.
Similarly,  if $\fO$ is bounded and $\theta_1<\theta_2$ then $H^{\gamma}_{p,\theta_1, \sigma}(\fO)\subset H^{\gamma}_{p,\theta_2,\sigma}(\fO)$.
\end{rem}

By $e_{\fO}H^{\gamma}_{p,\theta,\sigma}(\fO)$, we denote the space of distributions $u\in\cD'(\bR^d)$ such that $supp(u) \subset \overline{\fO}$ and $r_{\fO}u\in H^{\gamma}_{p,\theta,\sigma}(\fO)$.

In general, for a (tempered) distribution $u$ on $\bR^d$, since $\Delta^{\alpha/2}\phi$ may not be in the Schwartz space $\cS(\bR^d)$ for $\phi\in \cS(\bR^d)$, $\Delta^{\alpha/2}u$ may also not be well defined (see e.g. \cite{K17}). However, if $u$ is sufficiently regular in $\bR^d$, say $u\in H^{\alpha}_p(\bR^d)$, then for any $\phi\in C_c^\infty(\cO)$,
\begin{equation}
\label{eqn 3.14.5}
\langle \Delta^{\alpha/2}u,\phi\rangle_{\cO} =\langle\Delta^{\alpha/2}u,\phi\rangle_{\bR^d} = \langle u,\Delta^{\alpha/2}\phi\rangle_{\cO}+\langle u,\Delta^{\alpha/2}\phi\rangle_{\overline{\cO}^c},
\end{equation}
where $\langle f,g\rangle_{\fO}:=\int_{\fO} fgdx$.
Thus, $\Delta^{\alpha/2}u$ induces a distribution on $\cO$. Now, we consider $\Delta^{\alpha/2}$ on weighted Sobolev spaces. In \eqref{frac dist} below, we define $\Delta^{\alpha/2}u$ in accordance with relation \eqref{eqn 3.14.5}.
The proof of the lemma is given in Section \ref{sec prop}.

\begin{lem} \label{thm frac def}
Let $\gamma,\lambda\in\bR$, $\alpha\in(0,2)$, $p\in(1,\infty)$ and 
$d-1-\alpha p/2<\theta< d-1+p+\alpha p/2$.

(i) Let $\sigma\in \bR$. Then, for any $\eta\in C_c^\infty(\bR^d)$, we have $r_{\cO}\eta \in (H^{\gamma}_{p,\theta-\alpha p/2}(\cO))^*$ and $r_{\overline{\cO}^c}\eta \in (H^{\lambda}_{p,\theta-\alpha p/2,\sigma}(\overline{\cO}^c))^*$. Moreover, for given $v\in H^{\gamma}_{p,\theta-\alpha p/2}(\cO)$ and $w\in H^{\lambda}_{p,\theta-\alpha p/2,\sigma}(\overline{\cO}^c)$, the functional
\begin{align} \label{eq2311111704}
u(\eta):=(u,\eta)_{\bR^d} := (v,r_{\cO}\eta)_{\cO}+(w,r_{\overline{\cO}^c}\eta)_{\overline{\cO}^c}, \quad \eta \in C_c^\infty(\bR^d),
\end{align}
is well defined and $u\in e_{\cO}H^{\gamma}_{p,\theta-\alpha p/2}(\cO) \oplus e_{\overline{\cO}^c}H^{\lambda}_{p,\theta-\alpha p/2,\sigma}(\overline{\cO}^c)$, where $\oplus$ denotes the span of two linearly independent subspaces of $\cD'(\bR^d)$.

(ii) For any $\phi \in C^{\infty}_c(\cO)$,  we have
$$
r_{\cO}\Delta^{\alpha/2} (e_{\cO}\phi) \in (H^{\gamma}_{p,\theta-\alpha p/2}(\cO))^*, \,\text{ and }\, r_{\overline{\cO}^c}\Delta^{\alpha/2} (e_{\cO}\phi) \in (H^{\lambda}_{p,\theta-\alpha p/2,\sigma}(\overline{\cO}^c))^*,
$$
provided that $\sigma>-\theta-\alpha p/2$ if $\cO$ is bounded, and $\sigma=0$ if $\cO$ is a half space. 

(iii) Assume that $\sigma$ satisfies the condition in (ii). For $u\in e_{\cO}H^{\gamma}_{p,\theta-\alpha p/2}(\cO) \oplus e_{\overline{\cO}^c}H^{\lambda}_{p,\theta-\alpha p/2,\sigma}(\overline{\cO}^c)$,
the functional $\Delta^{\alpha/2}u$ defined as
\begin{align} \label{frac dist}
(\Delta^{\alpha/2}u,\phi)_{\cO} 
:= (r_{\cO}u,r_{\cO}\Delta^{\alpha/2} (e_{\cO}\phi))_{\cO}+(r_{\overline{\cO}^c}u,r_{\overline{\cO}^c}\Delta^{\alpha/2}(e_{\cO}\phi))_{\overline{\cO}^c}, \, \phi \in C^{\infty}_c(\cO),
\end{align}
is well defined and belongs to $H^{\gamma-\alpha}_{p,\theta+\alpha p/2}(\cO)$. 
 Moreover, we have
\begin{align} \label{ineq 1025-1}
&\|\psi^{\alpha/2} \Delta^{\alpha/2}u\|_{H^{\gamma-\alpha}_{p,\theta}(\cO)} \nonumber
\\
&\leq N \left(\|\psi^{-\alpha /2}r_{\cO}u\|_{H_{p,\theta}^{\gamma}(\cO)} + \|\psi^{-\alpha/2}(1+\psi)^{\sigma}r_{\overline{\cO}^c}u\|_{H_{p,\theta}^\lambda(\overline{\cO}^c)}\right).
\end{align}
\end{lem}

\begin{rem}
$(i)$ Thanks to Lemma \ref{lem space}$(iv)$, we can consider both $(v,r_{\cO}\eta)_{\cO}$ and $(w,r_{\overline{\cO}^c}\eta)_{\overline{\cO}^c}$ in \eqref{eq2311111704} although $r_{\cO}\eta$ and $r_{\overline{\cO}^c}\eta$ may not be in $C_c^\infty(\cO)$ and $C_c^\infty(\overline{\cO}^c)$, respectively.

$(ii)$ Under the assumptions of Lemma \ref{thm frac def}, for $v\in H^{\gamma}_{p,\theta-\alpha p/2}(\cO)$, one can find $\tilde{v}\in e_{\cO}H^{\gamma}_{p,\theta-\alpha p/2}(\cO)$ such that $r_{\cO}\tilde{v}=v$.

$(iii)$ Let conditions on $\gamma,\theta,\lambda, \sigma$ used  Lemma \ref{thm frac def}$(iii)$ hold. Obviously, if $\gamma, \lambda \geq 0$, that is $u$ is sufficiently regular, then we have
$$
(\Delta^{\alpha/2}u,\phi)_{\cO} = \langle u,\Delta^{\alpha/2}(e_{\cO}\phi)\rangle_{\cO}+\langle u,\Delta^{\alpha/2}(e_{\overline{\cO}^c}\phi)\rangle_{\overline{\cO}^c}, \quad \phi\in C_c^\infty(\cO).
$$

$(iv)$ If $\Delta^{\alpha/2}u\in\cD(\bR^d)$, which means that it is a well defined distribution on $\bR^d$, then the functional $\Delta^{\alpha/2}u$ in \eqref{frac dist} is actually $r_{\cO}\Delta^{\alpha/2}u$. However, for $u\in e_{\cO}H^{\gamma}_{p,\theta-\alpha p/2}(\cO) \oplus e_{\overline{\cO}^c}H^{\lambda}_{p,\theta-\alpha p/2,\sigma}(\overline{\cO}^c)$, $\Delta^{\alpha/2}u$ may not be defined on the whole space $\bR^d$. For instance, \eqref{ineq 1025-1} with $\gamma=\alpha$ implies that
\begin{equation*}
  \int_{D} |\Delta^{\alpha/2}u|^p d_x^{\theta-d+\alpha p/2} dx <\infty,
\end{equation*}
which allows $\Delta^{\alpha/2}u$ to blow up near the boundary $\partial\cO$.
Thus, in this paper, we do not employ the restriction notation for $\Delta^{\alpha/2}$. Furthermore, since we consider $\Delta^{\alpha/2}u$ with a distribution defined only on $\cO$, we do not explicitly specify the domain $\cO$ for $\Delta^{\alpha/2}$. 
  \end{rem}

Now we define the weak solution.

\begin{defn} \label{sol def}

$(i)$ (Parabolic problem)  Suppose $f(t,\cdot), u_0 \in \cD'(\cO)$ and $g(t,\cdot)\in \cD'(\overline{\cO}^c)$ for each $0<t< T$.   Then,
we say that a $\cD'(\bR^d)$-valued function $u$ is a (weak) solution to the problem
\begin{equation} \label{para def}
\begin{cases}
\partial_t u(t,x)=\Delta^{\alpha/2}u(t,x)+f(t,x),\quad &(t,x)\in(0,T)\times \cO,
\\
u(0,x)=u_0(x),\quad & x\in \cO,
\\
u(t,x)=g(t,x),\quad &(t,x)\in (0,T)\times \overline{\cO}^c,
\end{cases}
\end{equation}
if (a) $\Delta^{\alpha/2}u(t,\cdot)$ defined by \eqref{frac dist} makes sense and belongs to $\cD'(\cO)$,
 (b) $u(t,\cdot)=g(t,\cdot)$ on $\overline{\cO}^c$ for each $t\in (0,T)$, (c) for any $\phi\in C^{\infty}_c(\cO)$,
 \begin{equation*}
(\Delta^{\alpha/2}u(s,\cdot),\phi)_{\cO}, \, (f(s,\cdot),\phi)_{\cO} \in L_1((0,T)),
 \end{equation*}
 and the equality
\begin{align*}
(u(t,\cdot),\phi)_{\cO} &= (u_0,\phi)_{\cO} + \int_0^t (\Delta^{\alpha/2}u(s,\cdot),\phi)_{\cO} ds + \int_0^t (f(s,\cdot),\phi)_{\cO} ds
\end{align*}
holds for all $t < T$.

$(ii)$ (Elliptic problem) 
Let $f\in \cD'(\cO)$ and $g\in \cD'(\overline{\cO}^c)$. We say that $u\in \cD'(\bR^d)$ is a (weak) solution to the problem
\begin{equation} \label{elliptic def}
\begin{cases}
\Delta^{\alpha/2}u(x) = f(x),\quad &x\in \cO,
\\
u(x)=g(x),\quad &x\in \overline{\cO}^c,
\end{cases}
\end{equation}
if  (a)  $\Delta^{\alpha/2}u$ defined by \eqref{frac dist} makes sense and belongs to $\cD'(\cO)$, (b) $u=g $ on $\overline{\cO}^c$, (c) for any  $\phi\in C^{\infty}_c(\cO)$ we have
\begin{equation*}
(\Delta^{\alpha/2}u,\phi)_{\cO} = (f,\phi)_{\cO}.
\end{equation*}
\end{defn}

\subsection{Main results} \label{Regularity of solution}

 For $T>0, p>1$ and $\gamma, \theta, \sigma \in \bR$,  we denote
\begin{equation*}
\begin{aligned}
\bH_{p,\theta,\sigma}^\gamma(\fO,T) := L_{p}((0,T);H_{p,\theta,\sigma}^\gamma(\fO)).
\end{aligned}
\end{equation*}
We write $u\in \frH_{p,\theta}^{\gamma+\alpha}(\cO,T)$ if $u\in \psi^{\alpha/2}\bH_{p,\theta}^{\gamma+\alpha}(\cO,T)$, $u(0,\cdot) \in \psi^{\alpha/2-\alpha/p} B_{p,\theta}^{\gamma+\alpha-\alpha /p}(\cO)$, and  there exists $\tilde{f}\in \psi^{-\alpha/2}\bH_{p,\theta}^{\gamma} (\cO,T)$ such that for any $\phi\in C_c^\infty(\cO)$ 
$$
(u(t,\cdot),\phi)_{\cO}=(u(0,\cdot),\phi)_{\cO}+\int_0^t (\tilde{f}(s,\cdot),\phi)_{\cO} \,ds, \quad \forall \, t < T.
$$
Here, we write $u_t:=\partial_t u:=\tilde{f}$.  The norm in  $ \frH_{p,\theta}^{\gamma+\alpha}(\cO,T)$ is defined as 
\begin{align*}
\|u\|_{\frH_{p,\theta}^{\gamma+\alpha}(\cO,T)} :=& \|\psi^{-\alpha/2} u\|_{\bH_{p,\theta}^{\gamma+\alpha}(\cO,T)} + \|\psi^{\alpha/2} u_t\|_{\bH_{p,\theta}^{\gamma}(\cO,T)}
\\
& + \|\psi^{-\alpha/2+\alpha/p} u(0,\cdot) \|_{B_{p,\theta}^{\gamma+\alpha-\alpha /p}(\cO)}.
\end{align*}
Similar to $e_{\fO}H^{\gamma}_{p,\theta,\sigma}(\fO)$, by $e_{(0,T)\times\fO}\bH^{\gamma}_{p,\theta,\sigma}(\fO,T)$, we denote the space of $\cD'(\bR^d)$-valued distribution $u$ such that $supp(u(t,\cdot))\subset \overline{\fO}$ and $r_{(0,T)\times\fO}u\in\bH^{\gamma}_{p,\theta,\sigma}(\fO,T)$. One can also define $e_{(0,T)\times\cO}\frH^{\gamma+\alpha}_{p,\theta}(\cO,T)$ in a similar manner.

\begin{rem}
\label{remark 3.18}
$(i)$ Let $u$ be a weak solution to equation \eqref{para def}, and $\theta$ and $\sigma$ satisfy the conditions  in Lemma \ref{thm frac def}$(iii)$.
Then, by Lemma \ref{thm frac def}, $r_{(0,T)\times \cO}u\in \frH^{\gamma+\alpha}_{p,\theta}(\cO,T)$ if $r_{(0,T)\times \cO}u\in \psi^{\alpha/2}\bH_{p,\theta}^{\gamma+\alpha}(\cO,T)$, $f\in \psi^{-\alpha/2}\bH_{p,\theta}^{\gamma} (\cO,T)$, $g\in \bH^{\lambda}_{p,\theta,\sigma}(\overline{\cO}^c,T)$, and $u_0 \in \psi^{\alpha/2-\alpha/p} B_{p,\theta}^{\gamma+\alpha-\alpha /p}(\cO)$. Here, $\lambda\in \bR$. Moreover, in this case, $u_t=\Delta^{\alpha/2}u+f$ and 
\begin{align*}
&\|r_{(0,T)\times \cO}u\|_{\frH_{p,\theta}^{\gamma+\alpha}(\cO,T)} 
\\
&\leq N \|\psi^{-\alpha/2} r_{(0,T)\times \cO}u\|_{\bH_{p,\theta}^{\gamma+\alpha}(\cO,T)} + \|\psi^{\alpha/2} f\|_{\bH_{p,\theta}^{\gamma}(\cO,T)}
\\
&\quad+ \|\psi^{-\alpha/2+\alpha/p} u(0,\cdot) \|_{B_{p,\theta}^{\gamma+\alpha-\alpha /p}(\cO)} + \|\psi^{-\alpha/2}g\|_{\bH_{p,\theta,\sigma}^{\lambda}(\overline{\cO}^c,T)}.
\end{align*}

$(ii)$ 
By the trace theorem \cite[Theorem 1.8.2]{TrIn}, the Besov space $B_{p,\theta}^{\gamma+\alpha-\alpha /p}(\cO)$ is the optimal trace space for $\frH_{p,\theta}^{\gamma+\alpha}(\cO,T)$ (see also \cite[Remark 4.8]{Seo}).

\end{rem}

Theorem \ref{main thm para} and Theorem \ref{main thm ellip} below are main results of this article.

\begin{thm}[Parabolic case] \label{main thm para}
Let $p\in(1,\infty)$, $\gamma,\lambda\in\bR$,  $\theta\in(d-1, d-1+p)$, and $\sigma>-\theta-\alpha p/2$ if $\cO$ is bounded, and $\sigma=0$ if $\cO$ is the half space $\bR_+^d$. Then, for any $u_0\in \psi^{\alpha/2-\alpha/p} B_{p,\theta}^{\gamma+\alpha-\alpha/p}(\cO)$, $f \in \psi^{-\alpha/2}\bH_{p,\theta}^{\gamma}(\cO,T)$ and $g \in \psi^{\alpha/2}\bH_{p,\theta,\sigma}^{\lambda}(\overline{\cO}^c,T)$, 
 parabolic equation \eqref{para def} has a unique weak solution $u\in e_{(0,T)\times\cO}\frH^{\gamma+\alpha}_{p,\theta}(\cO,T)\, \oplus\, e_{(0,T)\times\overline{\cO}^c}\bH^{\lambda}_{p,\theta-\alpha p/2,\sigma}(\overline{\cO}^c,T)$ (see  Definition \ref{sol def}(i)), and for this solution $u$ we have
\begin{align} \label{est main para}
\|r_{(0,T)\times \cO}u\|_{\frH_{p,\theta}^{\gamma+\alpha}(\cO,T)} &\leq N \|\psi^{-\alpha/2+\alpha/p}u_0\|_{B_{p,\theta}^{\gamma+\alpha-\alpha/p}(\cO)} + N \|\psi^{\alpha/2}f\|_{\bH_{p,\theta}^{\gamma}(\cO,T)} \nonumber
\\
&\quad + N \|\psi^{-\alpha/2}g\|_{\bH_{p,\theta,\sigma}^{\lambda}(\overline{\cO}^c,T)},
\end{align}
where $N=N(d,p,\alpha,\gamma,\lambda,\theta,\sigma,\cO)$.
\end{thm}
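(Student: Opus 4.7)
The plan is to reduce \eqref{para def} to a problem with zero exterior data. Let $\tilde g$ denote the distribution on $\bR^d$ obtained by extending $g$ by zero to $D$; that is, $\tilde g = g$ on $\overline{D}^c$ and $\tilde g = 0$ on $D$. By Lemma \ref{lem space}(iii), the hypothesis $g\in\psi^{\alpha/2}\bH^\lambda_{p,\theta,\sigma}(\overline{D}^c,T)$ places $g$ in $\bH^\lambda_{p,\theta-\alpha p/2,\sigma}(\overline{D}^c,T)$, so $\tilde g$ is a well defined element of $\cD(\bR^d)$ via Lemma \ref{thm frac def}(i). Setting $v := u-\tilde g$, the function $v$ vanishes on $\overline{D}^c$, satisfies $v(0,\cdot)=u_0$ on $D$, and solves
\begin{equation*}
\partial_t v = \Delta^{\alpha/2}v + F \quad \text{in }(0,T)\times D,
\end{equation*}
where $F := f + \Delta^{\alpha/2}\tilde g$, since $\partial_t \tilde g \equiv 0$ on $D$ and $\Delta^{\alpha/2}$ is linear.

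The first task is to verify that $F\in \psi^{-\alpha/2}\bH^\gamma_{p,\theta}(D,T)$. Because $\tilde g$ vanishes on $D$, the interior contribution in Lemma \ref{thm frac def}(iii) is identically zero, and applying that lemma with its interior index raised from $\gamma$ to $\gamma+\alpha$ yields, for a.e.\ $t$,
\begin{equation*}
\|\psi^{\alpha/2}\Delta^{\alpha/2}\tilde g(t,\cdot)\|_{H^{\gamma}_{p,\theta}(D)} \le C\|\psi^{-\alpha/2} g(t,\cdot)\|_{H^{\lambda}_{p,\theta,\sigma}(\overline{D}^c)}.
\end{equation*}
Integrating in time, $F$ lies in the desired source class with norm controlled by $\|\psi^{\alpha/2}f\|_{\bH^\gamma_{p,\theta}(D,T)} + \|\psi^{-\alpha/2}g\|_{\bH^\lambda_{p,\theta,\sigma}(\overline{D}^c,T)}$. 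This is where the insensitivity to the regularity $\lambda$ of the exterior data enters: the interior regularity of $\Delta^{\alpha/2}\tilde g$ is limited only by the smoothness of the reciprocal-distance kernel between $D$ and $\overline{D}^c$, not by the smoothness of $g$ itself.

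The second task is to solve the reduced zero-exterior problem
\begin{equation*}
\partial_t v = \Delta^{\alpha/2}v+F \text{ in }D,\quad v(0,\cdot)=u_0,\quad v|_{\overline{D}^c}=0,
\end{equation*}
in the class $\frH^{\gamma+\alpha}_{p,\theta}(D,T)$ for every $\gamma\in\bR$, together with the corresponding a priori estimate. For $\gamma\ge 0$ this is a direct consequence of the theory developed in \cite{Dirichlet}. To reach arbitrary $\gamma\in\bR$, I would invoke the lifting $L = \psi^2\Delta - c$ recalled in the remark following Lemma \ref{thm frac def}: for $n\in\bN_+$ with $\gamma + 2n\ge 0$, the homeomorphism $L^n$ shifts the weighted Sobolev scale by $2n$ and allows one to transfer the nonnegative-order existence and uniqueness to the negative-order range, after a compatible treatment of the initial-trace space $B^{\gamma+\alpha-\alpha/p}_{p,\theta}(D)$ through standard interpolation/trace theory for weighted Besov spaces. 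Combining the estimate for $v$ with the estimate for $F$ from the previous step then delivers \eqref{est main para}.

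Uniqueness follows from the same reduction applied to the difference of two candidate solutions: this difference corresponds to $f=0$, $u_0=0$, $g=0$, hence $\tilde g=0$ and $v$ solves the homogeneous zero-exterior problem, which admits only the trivial solution by the uniqueness portion of the reduced theory. The principal obstacle is the negative-regularity extension of the zero-exterior theorem of \cite{Dirichlet}: one must ensure that the solutions produced via the $L^n$-lifting genuinely satisfy Definition \ref{sol def}(i) when paired with $\phi\in C_c^\infty(D)$, and that commutator terms arising between $L^n$ and $\Delta^{\alpha/2}$ on the weighted scale remain lower order and absorbable, so that the estimate is genuinely uniform in $\gamma$.
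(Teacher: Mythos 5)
Your reduction $v=u-\tilde g$ (zero extension of $g$) is a legitimate alternative to the paper's construction: the paper instead builds the exterior-data part of the solution directly from the parabolic Poisson kernel $\cQ_D g$, proves a zeroth-order estimate via kernel bounds (Lemmas \ref{Qd est}, \ref{lem para pp}, \ref{lem zeroth para}), and bootstraps with Lemma \ref{higher para}; negative $\lambda$ is then handled by decomposing $g=g_0+\sum_i D_i(\psi g_i)$ via Lemma \ref{negative rep}. Your key estimate $\|\psi^{\alpha/2}\Delta^{\alpha/2}\tilde g\|_{H^{\gamma}_{p,\theta}(D)}\le C\|\psi^{-\alpha/2}g\|_{H^{\lambda}_{p,\theta,\sigma}(\overline{D}^c)}$ is exactly Lemma \ref{lem out}$(ii)$ (extended by density), valid for arbitrary $\gamma,\lambda$, and since $\tilde g$ vanishes on $D$ the interior term in \eqref{ineq 1025-1} drops out; so the reduction does correctly transfer all the work to the zero-exterior problem and explains the insensitivity to $\lambda$. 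This part of your argument is sound and arguably more transparent than the paper's, though it leans on the hardest estimate of Section \ref{sec prop} just as the paper does.

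The genuine gap is in your second step. The zero-exterior theory for $\gamma<0$ is \emph{not} available in \cite{Dirichlet} -- the paper states explicitly that \cite{Dirichlet} does not treat negative regularity -- and your proposed mechanism for supplying it does not work. The operator $L=\psi^2\Delta-c$ does not commute with $\Delta^{\alpha/2}$: since $\Delta$ and $\Delta^{\alpha/2}$ commute, $[L,\Delta^{\alpha/2}]=[\psi^2,\Delta^{\alpha/2}]\Delta$, which is of order $\alpha+1$, i.e.\ strictly higher order than $\Delta^{\alpha/2}$ itself, so the commutator cannot be treated as a lower-order absorbable perturbation; moreover $\psi^2$ degenerates at $\partial D$ and $\Delta^{\alpha/2}$ is nonlocal, so even the mapping properties of this commutator on the weighted scale are nontrivial. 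The paper's actual route avoids commutators entirely: for the source term it proves the a priori estimate for $\gamma<-\alpha$ by testing $u=\cT_D f$ against an adjoint solution (duality in $H^{\gamma}_{p,\theta}$ via Lemma \ref{lem space}$(iv)$) and then fills in the intermediate range by complex interpolation (Lemma \ref{lem zero para}); for the initial data it uses the first-order representation $u_0=u_0^0+\sum_i D_i(\psi u_0^i)$ of Lemma \ref{negative rep} at the Besov level and corrects with a solvable source problem (Lemma \ref{lem zero initial}). Until you replace the $L^n$-lifting by an argument of this kind, your proof of existence, of the estimate \eqref{est main para}, and of uniqueness (which you also route through the zero-exterior problem) is incomplete for $\gamma<0$.
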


The space $\frH_{p,\theta}^{\gamma+\alpha}(\cO,T)$ is a Banach space (see \cite[Remark 2.8]{Dirichlet}), and this space provides nice regularities in $\cO$, especially near $\partial \cO$, as follows.

\begin{prop}(H\"older regularity for parabolic equation in $\cO$)
\label{Holder para}
Let $u$ taken from Theorem \ref{main thm para}.
Suppose  $1/p<\nu\leq1$  and 
    $$
    \gamma+\alpha-\nu\alpha-\frac{d}{p} \geq n+\delta, \quad n\in \bN_+, \, \delta\in (0,1).
    $$
   Then, 
    \begin{align*}
        &\sum_{k=0}^n|\psi^{k+\frac{\theta}{p}+\alpha\left(\nu-\frac{1}{2}\right)}D^k_x(u-u(0,\cdot))|_{C^{\nu-1/p}([0,T];C(\cO))}
        \\
        &+\sup_{t,s\in[0,T]}\frac{[\psi^{n+\delta+\frac{\theta}{p}+\alpha\left(\nu-\frac{1}{2}\right)}D^n_x(u(t,\cdot)-u(s,\cdot))]_{C^{\delta}(\cO)}}{|t-s|^{\nu-1/p}}\leq N \|r_{(0,T)\times\cO}u\|_{\frH_{p,\theta}^{\gamma+\alpha}(\cO,T)},
    \end{align*}
where $N$ depends only on $d,\nu,p,\theta,\alpha$ and $T$.
    
\end{prop}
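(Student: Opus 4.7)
The plan is to factor the proof into two independent embeddings: a sharp time-Hölder embedding of the parabolic class $\frH^{\gamma+\alpha}_{p,\theta}(D,T)$ into a $C^{\nu-1/p}$-in-time space with values in an intermediate weighted Sobolev space on $D$, followed by the weighted spatial Sobolev-Hölder embedding already available in Lemma \ref{lem space}(v). Write $v:=u-u(0,\cdot)$, so $v(0)=0$, $v_t=u_t$, and all three seminorms in the conclusion depend only on $v$. By Lemma \ref{lem space}(iii) the hypothesis $u\in\frH^{\gamma+\alpha}_{p,\theta}(D,T)$ is equivalent to
\[
v\in L_p(0,T;A),\qquad v_t\in L_p(0,T;B),\qquad A:=H^{\gamma+\alpha}_{p,\theta-\alpha p/2}(D),\quad B:=H^\gamma_{p,\theta+\alpha p/2}(D).
\]
Setting $\gamma_\nu:=\gamma+\alpha-\nu\alpha$ and $\theta_\nu:=\theta+\alpha p(\nu-\tfrac12)$, the target reduces to proving
\[
\|v(t)-v(s)\|_{H^{\gamma_\nu}_{p,\theta_\nu}(D)}\le C\,|t-s|^{\nu-1/p}\|u\|_{\frH^{\gamma+\alpha}_{p,\theta}(D,T)},\qquad 1/p<\nu\le 1,
\]
because then Lemma \ref{lem space}(v) applied to $w=v(t)-v(s)$ (valid by hypothesis $\gamma_\nu-d/p\ge n+\delta$ and the identity $\theta_\nu/p=\theta/p+\alpha(\nu-\tfrac12)$) converts the spatial Sobolev regularity to the stated weighted $C^\delta$ regularity, while division by $|t-s|^{\nu-1/p}$ and supremization over $t,s$ produces the Hölder-in-time seminorms.

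The heart of the argument is the displayed inequality. For $\nu=1$ it is elementary: $v(t)-v(s)=\int_s^t u_r\,dr$ and Hölder's inequality in time give $\|v(t)-v(s)\|_B\le|t-s|^{1-1/p}\|u_t\|_{L_p(B)}$. For $1/p<\nu<1$, I would regularize in time at scale $\rho:=|t-s|$: choose a smooth mollifier $\eta_\rho(t):=\rho^{-1}\eta(t/\rho)$, extend $v$ by a reflection outside $(0,T)$, and set $v^\rho:=\eta_\rho*_t v$. The elementary estimates $\|\eta'_\rho\|_{L_{p'}}\lesssim\rho^{-1-1/p}$ and $v(r)-v(\tau)=\int_\tau^r u_\sigma\,d\sigma$ yield
\[
\|v^\rho(t)-v^\rho(s)\|_A\lesssim\rho^{-1/p}\|v\|_{L_p(A)},\qquad \|(v-v^\rho)(t)\|_B+\|(v-v^\rho)(s)\|_B\lesssim\rho^{1-1/p}\|u_t\|_{L_p(B)}.
\]
Writing $v(t)-v(s)=\mathfrak a+\mathfrak b$ with $\mathfrak a:=v^\rho(t)-v^\rho(s)\in A$ and $\mathfrak b:=(v-v^\rho)(t)-(v-v^\rho)(s)\in B$, combined with the trivial bound $K(\tau,v(t)-v(s);A,B)\le\tau\|v(t)-v(s)\|_B\lesssim\tau\rho^{1-1/p}\|u_t\|_{L_p(B)}$ for small $\tau$, splitting the K-functional integral at $\tau_*\approx\rho^{-1}\|v\|_{L_p(A)}/\|u_t\|_{L_p(B)}$ produces
\[
\|v(t)-v(s)\|_{(A,B)_{\nu,p}}\lesssim\rho^{-1/p(1-\nu)}\rho^{(1-1/p)\nu}\|v\|_{L_p(A)}^{1-\nu}\|u_t\|_{L_p(B)}^\nu=\rho^{\nu-1/p}\bigl(\|v\|_{L_p(A)}+\|u_t\|_{L_p(B)}\bigr).
\]

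It remains to identify the real interpolation space $(A,B)_{\nu,p}$ with (or embed it into) $H^{\gamma_\nu}_{p,\theta_\nu}(D)$. The arithmetic matches,
\[
\gamma_\nu=(1-\nu)(\gamma+\alpha)+\nu\gamma,\qquad\theta_\nu=(1-\nu)(\theta-\alpha p/2)+\nu(\theta+\alpha p/2),
\]
and the identification can be verified directly at the level of the defining norm \eqref{def sobolev}: apply the Fourier interpolation inequality $\|f\|_{H^{\gamma_\nu}_p}\lesssim\|f\|_{H^{\gamma+\alpha}_p}^{1-\nu}\|f\|_{H^\gamma_p}^\nu$ to each local piece $\zeta_{-n}(e^n\cdot)w(e^n\cdot)$, split $e^{n\theta_\nu}$ multiplicatively, and sum in $n\in\bZ$ with Hölder's inequality in $\ell^p$. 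Since the $C^\delta$ norm on the right with $\delta\in(0,1)$ is insensitive to a small loss, the Besov-versus-Sobolev distinction caused by real interpolation is harmless. Chaining this with Lemma \ref{lem space}(v) yields the proposition.

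The main obstacle is the sharp time-Hölder exponent $\nu-1/p$. The naive pointwise bound $\|v(t)-v(s)\|_{(A,B)_{\nu,p}}\lesssim\|v(t)-v(s)\|_A^{1-\nu}\|v(t)-v(s)\|_B^\nu$ is unavailable, because we only control $\|v\|_A$ in $L_p$, not $L_\infty$, in time; the mollification splitting at scale $\rho=|t-s|$ is precisely what restores the correct $\rho^{\nu-1/p}$ factor. Conceptually this is the weighted, non-local parabolic analogue of the classical Lions trace theorem $L_p(X_1)\cap W^{1,p}(X_0)\hookrightarrow C^{\mu-1/p}([0,T];(X_0,X_1)_{1-\mu,p})$ applied with $X_0=B$, $X_1=A$, $\mu=\nu$, and the verification that the abstract trace space matches the concrete weighted Sobolev space is the key bookkeeping step.
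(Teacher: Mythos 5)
Your overall architecture --- reduce to the abstract trace embedding of $L_p(0,T;A)\cap W^1_p(0,T;B)$ into $C^{\nu-1/p}([0,T];(A,B)_{\nu,p})$ with $A=H^{\gamma+\alpha}_{p,\theta-\alpha p/2}(D)$ and $B=H^{\gamma}_{p,\theta+\alpha p/2}(D)$, identify the interpolation space, then apply the spatial embedding of Lemma \ref{lem space}(v) --- is sound, and the index bookkeeping ($\gamma_\nu=\gamma+\alpha-\nu\alpha$, $\theta_\nu/p=\theta/p+\alpha(\nu-\tfrac12)$) is exactly right. Note the paper does not prove this proposition at all; it cites \cite[Proposition 2.15]{Dirichlet}, so any self-contained argument is ``different'', but yours is the natural one.

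There is, however, one step that fails as written: the K-functional estimate for large $\tau$. With the mollification scale frozen at $\epsilon=\rho=|t-s|$, your decomposition only gives $K(\tau,v(t)-v(s);A,B)\le M_A+\tau M_B$ with $M_A=\rho^{-1/p}\|v\|_{L_p(A)}$ and $M_B=\rho^{1-1/p}\|u_t\|_{L_p(B)}$; the term $\tau M_B$ grows linearly, so $\int_{\tau_*}^{\infty}\bigl(\tau^{-\nu}K(\tau)\bigr)^p\,\tau^{-1}d\tau$ diverges (the integrand behaves like $\tau^{(1-\nu)p-1}$ with $\nu<1$). You cannot ``split at $\tau_*$'' and retain only $M_A$ on $(\tau_*,\infty)$ while keeping a single fixed decomposition. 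The standard repair is to let the scale depend on $\tau$: for $\tau\ge\tau_*$ take $\epsilon(\tau)\approx\|v\|_{L_p(A)}/(\tau\|u_t\|_{L_p(B)})\le\rho$, which gives $K(\tau)\lesssim\tau^{1/p}\|v\|_{L_p(A)}^{1-1/p}\|u_t\|_{L_p(B)}^{1/p}$, and then the tail integral converges precisely because $\nu>1/p$. As written, your argument never invokes $\nu>1/p$ in the large-$\tau$ regime, which is the telltale sign of the gap. A secondary, smaller issue: verifying the interpolation inequality on the local pieces of \eqref{def sobolev} only yields the multiplicative bound, i.e.\ the embedding of $(A,B)_{\nu,1}$ into $H^{\gamma_\nu}_{p,\theta_\nu}(D)$; the space $(A,B)_{\nu,p}$ is of weighted Besov type and for $p>2$ does not embed into $H^{\gamma_\nu}_{p,\theta_\nu}(D)$. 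Your $\epsilon$-loss remark covers this except at the endpoint $\gamma+\alpha-\nu\alpha-d/p=n+\delta$, which the proposition permits; the clean fix is to land in $B^{\gamma_\nu}_{p,\theta_\nu}(D)$ and use the Besov analogue of Lemma \ref{lem space}(v), which is available in this framework since $\delta\in(0,1)$.
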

\begin{proof}
See e.g. \cite[Proposition 2.15]{Dirichlet}.
\end{proof}

\begin{thm}[Elliptic case] \label{main thm ellip}
Let $p\in(1,\infty)$, $\gamma,\lambda\in\bR$,  $\theta\in(d-1, d-1+p)$, and $\sigma>-\theta-\alpha p/2$ if $\cO$ is bounded and $\sigma=0$ if $\cO$ is a half space. Then, for any $f \in \psi^{-\alpha/2}H_{p,\theta}^{\gamma}(\cO)$ and $g \in \psi^{\alpha/2}H_{p,\theta,\sigma}^{\lambda}(\overline{\cO}^c)$, 
elliptic equation \eqref{elliptic def} has a unique weak solution $u\in e_{\cO}H^{\gamma+\alpha}_{p,\theta-\alpha p/2}(\cO)\, \oplus\, e_{\overline{\cO}^c}H^{\lambda}_{p,\theta-\alpha p/2,\sigma}(\overline{\cO}^c)$ (see Definition \ref{sol def}(ii)), and for this solution $u$ we have
\begin{align} \label{est main ellip}
\|\psi^{-\alpha/2}r_{\cO}u\|_{H_{p,\theta}^{\gamma+\alpha}(\cO)} \leq N \left( \|\psi^{\alpha/2}f\|_{H_{p,\theta}^{\gamma}(\cO)} + \|\psi^{-\alpha/2}g\|_{H_{p,\theta,\sigma}^{\lambda}(\overline{\cO}^c)}\right),
\end{align}
where $N=N(d,p,\alpha,\gamma,\lambda,\theta,\sigma,\cO)$.
\end{thm}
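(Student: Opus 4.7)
My approach is to split $u = v + w$, where $v$ handles the source term and $w$ handles the exterior data. Concretely, let $v$ solve $\Delta^{\alpha/2}v = f$ in $D$ with $v = 0$ on $\overline{D}^c$, and let $w$ solve $\Delta^{\alpha/2}w = 0$ in $D$ with $w = g$ on $\overline{D}^c$. The solvability and the estimate $\|\psi^{-\alpha/2}v\|_{H_{p,\theta}^{\gamma+\alpha}(D)} \leq C\|\psi^{\alpha/2}f\|_{H_{p,\theta}^{\gamma}(D)}$ for $v$ reduce, when $\gamma \geq 0$, to the zero-exterior Dirichlet theory of \cite{Dirichlet}. To cover arbitrary $\gamma \in \bR$ I would first extend that theory to negative $\gamma$ by the duality of Lemma~\ref{lem space}(iv) combined with the homeomorphism property of $L = \psi^2\Delta - c$ noted in the remark after Lemma~\ref{thm frac def}, which shifts the regularity index by two. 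The genuinely new content of the theorem lies in $w$.

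For $w$, I would first construct candidates corresponding to smooth exterior data $g \in C_c^\infty(\overline{D}^c)$ via the classical Poisson representation $w(x) = \int_{\overline{D}^c} P_D(x,y)g(y)\,dy$, and then extend to arbitrary $g \in \psi^{\alpha/2}H_{p,\theta,\sigma}^{\lambda}(\overline{D}^c)$ by density (Lemma~\ref{lem space}(i)) once the uniform a priori bound
\begin{equation*}
\|\psi^{-\alpha/2}w\|_{H_{p,\theta}^{\gamma+\alpha}(D)} \leq C\|\psi^{-\alpha/2}g\|_{H_{p,\theta,\sigma}^{\lambda}(\overline{D}^c)}
\end{equation*}
is in hand. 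The a priori bound I would prove by duality. By Lemma~\ref{lem space}(iii)--(iv), it suffices to bound $|(\psi^{-\alpha/2}w,\eta)_D|$ for every $\eta$ in the unit ball of $H_{p',\theta'}^{-\gamma-\alpha}(D)$, where $p',\theta',\sigma'$ are the conjugate indices from Lemma~\ref{lem space}(iv). For such $\eta$, the zero-exterior theory applied with these switched exponents produces a unique $\phi$ with $\phi|_{\overline{D}^c} = 0$, $\Delta^{\alpha/2}\phi = \psi^{-\alpha/2}\eta$ in $D$, and $\|\psi^{-\alpha/2}\phi\|_{H_{p',\theta'}^{-\gamma}(D)} \leq C\|\eta\|_{H_{p',\theta'}^{-\gamma-\alpha}(D)}$. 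Using $\Delta^{\alpha/2}w = 0$ in $D$, $w|_{\overline{D}^c} = g$, and Lemma~\ref{thm frac def}, one obtains the integration-by-parts identity
\begin{equation*}
(\psi^{-\alpha/2}w,\eta)_D = (w,\Delta^{\alpha/2}\phi)_D = -(g,\Delta^{\alpha/2}\phi)_{\overline{D}^c},
\end{equation*}
which is bounded by the product $\|\psi^{-\alpha/2}g\|_{H_{p,\theta,\sigma}^{\lambda}(\overline{D}^c)}\,\|\psi^{\alpha/2}\Delta^{\alpha/2}\phi\|_{H_{p',\theta',\sigma'}^{-\lambda}(\overline{D}^c)}$.

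The principal obstacle is therefore to establish, for every $\lambda \in \bR$ and every $\phi$ arising as above,
\begin{equation*}
\|\psi^{\alpha/2}\Delta^{\alpha/2}\phi\|_{H_{p',\theta',\sigma'}^{-\lambda}(\overline{D}^c)} \leq C(\lambda)\,\|\psi^{-\alpha/2}\phi\|_{H_{p',\theta'}^{-\gamma}(D)},
\end{equation*}
with the constant depending on $\lambda$ but uniform in $\phi$. The key point --- and the mechanism by which rough $g$ does not hurt the interior regularity of $u$ --- is that $\phi|_{\overline{D}^c} = 0$, so for $x \in \overline{D}^c$ the pointwise formula $\Delta^{\alpha/2}\phi(x) = c_d\int_D \phi(y)|x-y|^{-d-\alpha}\,dy$ holds and defines a function smooth in $x$ away from $\partial D$ with $|x|^{-d-\alpha}$ decay at infinity. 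Together with the boundary behavior $\phi \sim d_y^{\alpha/2}$ encoded in the solution space $\psi^{\alpha/2}H_{p',\theta'}^{-\gamma}(D)$, this yields the estimate uniformly in the smoothness index $-\lambda$ after a $\zeta_n$-decomposition of $\overline{D}^c$ as in \eqref{def sobolev}. The restrictions $\theta \in (d-1,d-1+p)$ and $\sigma > -\theta - \alpha p/2$ enter decisively here: the former (in its conjugate form) ensures the auxiliary Dirichlet problem is well-posed, while the latter guarantees convergence of the defining integrals at infinity when $D$ is bounded. Finally, uniqueness follows by applying the same duality identity to the difference $u_1-u_2$ of two solutions (for which $f = 0$ and $g = 0$): the identity forces $(u_1-u_2,\eta)_D = 0$ for $\eta$ in a dense set, so $u_1 = u_2$ on $D$, and coincidence on $\overline{D}^c$ completes the proof.
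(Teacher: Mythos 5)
Your proposal is correct in outline and its skeleton (the splitting $u=v+w$, the zero--exterior theory extended to all real $\gamma$ by duality/interpolation, and the Poisson representation $\cK_Dg$ for smooth $g$ followed by density) coincides with the paper's. Where you genuinely diverge is in how the a priori bound for the exterior-data part $w$ is obtained. You run a single global duality argument: test $\psi^{-\alpha/2}w$ against $\eta$ in the dual unit ball, solve the adjoint zero-exterior problem $\Delta^{\alpha/2}\phi=\psi^{-\alpha/2}\eta$ with conjugate indices, and reduce everything to the ``interior-to-exterior'' mapping estimate $\|\psi^{\alpha/2}\Delta^{\alpha/2}\phi\|_{H^{-\lambda}_{p',\theta',\sigma'}(\overline{D}^c)}\leq C\|\psi^{-\alpha/2}\phi\|_{H^{-\gamma}_{p',\theta'}(D)}$ --- which is exactly Lemma \ref{lem out}$(i)$ of the paper (your conjugate conditions on $\theta'$ and $\sigma'$ do match its hypotheses), so the ``principal obstacle'' you isolate is already available. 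The paper instead proves the zeroth-order estimate directly from the Poisson kernel bounds (Lemmas \ref{Kd est} and \ref{lem zeroth ell}), bootstraps to arbitrary positive $\gamma$ by a localization/commutator argument (Lemmas \ref{lem perturb} and \ref{higher ellip}), and handles negative $\lambda$ by writing $g=g_0+\sum_iD_i(\psi g_i)$ via Lemma \ref{negative rep} and correcting with the zero-exterior theory. Your route is more uniform --- it treats all $\gamma,\lambda$ simultaneously and makes transparent why $\lambda$ never enters the interior regularity --- at the price of leaning entirely on the negative-order zero-exterior solvability for the adjoint problem and on Lemma \ref{lem out}. Two small points to tighten: (a) when you substitute the adjoint solution $\phi$ into the weak formulation of $w$, you must approximate $\phi$ by $C^\infty_c(D)$ functions in a norm strong enough that \emph{both} pairings $(w,\Delta^{\alpha/2}\phi_n)_D$ and $(g,\Delta^{\alpha/2}\phi_n)_{\overline{D}^c}$ converge; since at that stage $w$ is only known in $\psi^{\alpha/2}L_{p,\theta}(D)$, you need $\phi_n\to\phi$ in $\psi^{\alpha/2}H^{\alpha\vee(-\gamma)}_{p',\theta'}(D)$, which is legitimate because $\eta\in C^\infty_c(D)$ makes $\phi$ lie in every order, but this should be said; (b) the membership $\psi^{-\alpha/2}w\in H^{\gamma+\alpha}_{p,\theta}(D)$ is concluded from the bound on the dense set $\eta\in C^\infty_c(D)$ together with reflexivity, not assumed beforehand --- your write-up reads slightly as if the a priori estimate presupposes membership.
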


\begin{rem}
We discuss the necessity of assumptions  and optimality of results  in  \ref{main thm ellip}. The similar statements hold for Theorem \ref{main thm para}.

\begin{enumerate}

\item Range of weight parameter $\theta$ for $f$. 

 The range of $\theta$ is optimal in the sense that if $\theta\leq d-1$ or $\theta\geq d-1+p$, then \eqref{est main para} and \eqref{est main ellip} fail to hold even if  $f\in C^{\infty}_c(\cO)$ and $g=0$. 
See \cite[Remark 2.5]{Dirichlet} for details.

\vspace{1mm}

\item Ranges of weight parameters $\theta$ and $\sigma$ for $g$. 

 These ranges are technical  and   derived from the  behavior of the kernel which appears in the representation of solutions (see Lemma \ref{lem zeroth ell}).

\vspace{1mm}

(i) The weight parameter $\theta$ characterizes the boundary behaviors of functions. The range of $\theta$ for $u$ is determined by $f$ as discussed above, and  we  use  the same weight parameter $\theta$ for $g$. To explain the reason for this, for simplicity, let $f=0$ and  $g\in \psi^{\alpha/2}L_{p,\theta,\sigma}(\overline{\cO}^c)$. Then the solution to the elliptic equation is 
 represented as
\begin{equation}
    u(x)=  \int_{\overline{\cO}^c} K_{\cO}(x,y) g(y)dy, \label{eqn 400}
\end{equation}
where $K_{\cO}$  is the Poisson kernel associated with $\Delta^{\alpha/2}$ in $\cO$ (see Section \ref{sec repre}).  If both $x$ and $y$ are near the boundary $\partial \cO$, then $\psi^{-\alpha/2}(x) K_{\cO}(x,y) \leq N {\psi^{-\alpha/2}(y)}$.
This leads us to use the same weight parameter $\theta$ for $u$ and $g$.

\vspace{1mm}

(ii) Due to the decay of $K_{\cO}$ near infinity, we need certain blow up rate of $g$ near infinity to make sense of integral \eqref{eqn 400}. This leads us to the range of $\sigma$ prescribed in Theorem \ref{main thm ellip}.

\vspace{1mm}

\item Optimal regularity (differentiability) relation between solution and free terms. 

  At first, note that  there is no relation between $\lambda$ and $\gamma$, and consequently (weighted) regularity of solution in $\cO$ is not affected by that of exterior condition $g$ in $\overline{\cO}^c$.

\vspace{1mm}
For the relation between $u$ and $f$, consider \eqref{elliptic def} given with $g=0$. If $u$ is a solution such that $r_{\cO}u \in H^{\gamma+\alpha}_{p,\theta-\alpha p/2}(\cO)$, then by Lemma \ref{thm frac def}$(iii)$,  $f=\Delta^{\alpha/2}u$ automatically belongs to $H^{\gamma}_{p,\theta+\alpha p/2}(\cO)$. This shows that the assumptions for $f$ are necessary.

\end{enumerate}

\end{rem}
%
%
%

\begin{rem}
 If we (formally) take $\alpha\to2$, then we get classical results in \cite{KK2004,Krylovhalf} for the case $\alpha=2$.  For a rigorous proof,  we need to verify that the constants in \eqref{est main para} and \eqref{est main ellip} are uniformly bounded as $\alpha\to2$.  While we believe this to be true, we do not address it in this article. Most of our constants are inherited from \cite{Dirichlet}, and thus  to check the uniform bound it is needed to   examine   the constants in \cite{Dirichlet} and its  references.

\end{rem}

    


The following result is a consequence of Lemma \ref{lem space}$(v)$.

\begin{prop}(H\"older regularity for elliptic equation in $\cO$) \label{Holder ellip}
Let $u$ be taken from Theorem \ref{main thm ellip}   and 
    $$
    \gamma+\alpha-\frac{d}{p}\geq n+\delta, \quad n\in \bN_+, \, \delta\in (0,1).
    $$
Then,
    \begin{align*}
        \sum_{k=0}^n|\psi^{k+\frac{\theta}{p}-\frac{\alpha}{2}}D^k_xu|_{C_b(\cO)}+[\psi^{n+\delta+\frac{\theta}{p}-\frac{\alpha}{2}}D^n_xu]_{C^{\delta}(\cO)}\leq N\|\psi^{-\alpha/2}r_{\cO}u\|_{H_{p,\theta}^{\gamma+\alpha}(\cO)},
    \end{align*}
    where $N$ depends only on $d,\nu,p,\theta$ and $\alpha$.
\end{prop}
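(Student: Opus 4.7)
My plan is to apply the weighted Sobolev--H\"older embedding in Lemma \ref{lem space}(v) directly to $u$, after absorbing the factor $\psi^{-\alpha/2}$ into a shift of the weight index $\theta$. From Theorem \ref{main thm ellip} I only know that $\psi^{-\alpha/2}u\in H_{p,\theta}^{\gamma+\alpha}(D)$, which is not immediately in the form demanded by Lemma \ref{lem space}(v); the role of Lemma \ref{lem space}(iii) is precisely to convert such a statement into membership of $u$ in a weighted Sobolev space with a different $\theta$.

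Concretely, I first invoke Lemma \ref{lem space}(iii) with $\delta_1=-\alpha/2$ and $\delta_2=0$, which gives the identity
$$
H_{p,\theta}^{\gamma+\alpha}(D) \;=\; \psi^{-\alpha/2}\,H_{p,\theta-\alpha p/2}^{\gamma+\alpha}(D),
$$
with equivalent norms. Consequently
$$
u \in H_{p,\theta-\alpha p/2}^{\gamma+\alpha}(D), \qquad \|u\|_{H_{p,\theta-\alpha p/2}^{\gamma+\alpha}(D)} \;\approx\; \|\psi^{-\alpha/2}u\|_{H_{p,\theta}^{\gamma+\alpha}(D)}.
$$
Now I apply Lemma \ref{lem space}(v) to $u$ itself, with regularity index $\gamma+\alpha$, weight parameter $\theta-\alpha p/2$, and $\sigma=0$. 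The hypothesis $(\gamma+\alpha)-d/p\geq n+\delta$ is exactly what is required, so for every $k\leq n$ I obtain
$$
\bigl|\psi^{k+(\theta-\alpha p/2)/p}D_x^k u\bigr|_{C(D)} + \bigl[\psi^{n+(\theta-\alpha p/2)/p+\delta}D_x^n u\bigr]_{C^{\delta}(D)} \;\leq\; C\,\|u\|_{H_{p,\theta-\alpha p/2}^{\gamma+\alpha}(D)}.
$$
Using $(\theta-\alpha p/2)/p=\theta/p-\alpha/2$ to rewrite the exponents and combining with the norm equivalence above yields the stated estimate.

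There is no real obstacle here: the proposition is essentially a bookkeeping consequence of the shifting identity in Lemma \ref{lem space}(iii) and the embedding in Lemma \ref{lem space}(v). The one thing to verify carefully is the arithmetic identity $(\theta-\alpha p/2)/p=\theta/p-\alpha/2$, which ensures that the powers of $\psi$ produced by Lemma \ref{lem space}(v) with the shifted weight $\theta-\alpha p/2$ match precisely the powers $k+\theta/p-\alpha/2$ and $n+\delta+\theta/p-\alpha/2$ appearing in the statement. The constant $C$ then inherits dependence only on $d,p,\theta,\alpha,\gamma,\delta$, as claimed.
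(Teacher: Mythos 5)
Your argument is correct and is exactly the route the paper intends: the paper states that Proposition \ref{Holder ellip} is a consequence of Lemma \ref{lem space}$(v)$, and the only nontrivial step is precisely the weight shift via Lemma \ref{lem space}$(iii)$ with $\delta_1=-\alpha/2$, $\delta_2=0$, which you carry out correctly, including the exponent arithmetic $(\theta-\alpha p/2)/p=\theta/p-\alpha/2$.
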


\begin{rem}
$(i)$ To compare Proposition \ref{Holder ellip} with results in \cite{Ros2014},  let us first consider the case $\gamma=0$, $\theta=d, g=0$ and  $\cO$ is bounded. Proposition \ref{Holder ellip} yields H\"older continuity of solutions provided that  $f$ belongs to appropriate $L_p$ spaces, and the H\"older exponent varies according to $p$. In particular, for any given $\delta\in (0,\alpha)$ and small $\varepsilon>0$, taking $p$ sufficiently large we get
\begin{equation} \label{eq8141636}
|\psi^{-\alpha/2 +\varepsilon}u|_{C_b(\cO)}+ |\psi^{\delta-\frac{\alpha}{2}+\varepsilon}u|_{C^{\delta}(\cO)}\leq
 N \|\psi^{\alpha/2}f\|_{L_{p}(\cO)},
\end{equation}
where the constant $N$ depends also on $p$.  On the other hand, it is shown  in \cite{Ros2014} that,  if $f\in L_{\infty}(\cO)$ and $g=0$, then the following optimal regulaity result holds; there exists $\delta < (\alpha/2 \wedge 1/2)$ such that
\begin{equation}
\label{eqn 4.2.3}
|u|_{C^{\alpha/2}(\bR^d)}+ |\psi^{-\frac{\alpha}{2}}u|_{C^{\delta}(\bR^d)} \leq N \|f\|_{L_\infty(\cO)}.
\end{equation}
Note that \eqref{eqn 4.2.3} gives better H\"older regularity of solution than \eqref{eq8141636} if $f\in L_{\infty}(\cO)$. Our approach  does not lead to \eqref{eqn 4.2.3} even if $f$ is bounded,

$(ii)$ Next we consider higher order H\"older regularity of solutions.  Let $\cO$ be  bounded, and $0<n+\delta<\gamma+\alpha$ where $n\in\bN_0$ and $\delta\in(0,1)$. Then, as in $(i)$, for any small $\varepsilon>0$, there exists (sufficiently large) $p$ such that
\begin{align*} 
  \sum_{k=0}^n|\psi^{k-\frac{\alpha}{2}+\varepsilon}D^k_xu|_{C_b(\cO)}+[\psi^{n+\delta-\frac{\alpha}{2}+\varepsilon}D^n_xu]_{C^{\delta}(\cO)} \leq N \|\psi^{\alpha/2}f\|_{H_{p,d}^\gamma(\cO)}.
\end{align*}
In \cite[Corollary 3.1]{ABC16}, it is shown that if $\partial \cO \in C^2_b$, and $f\in C^{\gamma'}(\cO)$ with $\gamma'\in(0,\alpha/2)$, then 
\begin{equation} \label{ineq1017-2}
  u\in C_{\alpha+\gamma'}^{(-\alpha/2)}(\cO)
\end{equation}
where $C_{\alpha+\gamma'}^{(-\alpha/2)}(\cO)$ denotes the interior (weighted) H\"older space (see e.g. \cite{GT01}).  This and the relation $C^{\gamma'}(\cO)\subset H_{p,d}^\gamma(\cO)$ for any $\gamma'>\gamma$ show that \eqref{ineq1017-2} can give better H\"older regularity of solutions if $f$ is H\"older continuous.

$(iii)$ If $u_0=0$ and $f=0$,  then by  Proposition \ref{Holder para} the solution to the parabolic equation \eqref{para def} is infinitely differentiable in $\cO$. Similar statement also holds for the elliptic equation.

\end{rem}

 The following example shows that exterior data $g$ can be rough near the boundary.
\begin{exam}
 Let $\cO$ be bounded and  
 $$g(x)=D^{\beta}_x F(x),$$
where  $\beta$ is a multi-index, and  $F$ is a function defined on $\overline{\cO}^c$ such that 
\begin{equation*}
|F(x)|\leq \begin{cases} N \psi^{ \upsilon+|\beta|}(x), \, \quad  \upsilon>-1+\frac{\alpha}{2} &: d_x \leq c,  \,x\in \overline{\cO}^c, 
\\
N \psi(x)^{\alpha-\varepsilon}, \quad \varepsilon>0 \quad &: d_x\geq c,  \,x\in \overline{\cO}^c.
\end{cases}
\end{equation*}
We take 
 $\sigma=-\theta-\alpha p/2+\kappa$, where  $\kappa\in (0,\varepsilon p)$,  such that $\sigma>-\theta-\alpha p/2$ and
\begin{align*}
&\|F\|^p_{L_{p,\theta-|\beta| p-\alpha p/2,\sigma}(\overline{\cO}^c)}
\\
&\leq N \int_{\{x\in \overline{\cO}^c: \psi(x)\leq c\}} d_x^{\theta-d-\alpha p/2+\upsilon p}dx + N \int_{\{x\in \overline{\cO}^c: \psi(x)\geq c\}} \frac{1}{d_x^{d+\varepsilon p-\kappa}}dx<\infty,
\end{align*}
provided that 
\begin{equation}
\label{eqn 3.30.1}
\theta-d-\alpha p/2+\upsilon p>-1.
\end{equation}
Therefore by Lemma \ref{lem space}$(ii)$, if \eqref{eqn 3.30.1} holds, then $g\in H^{-|\beta|}_{p,\theta-\alpha p/2,\sigma}(\overline{\cO}^c)$.
Note that since $\upsilon > -1+\alpha/2$, one can find 
$\theta_0=\theta_0(\upsilon)<d-1+p$ such that \eqref{eqn 3.30.1} holds for $\theta\in (\theta_0, d-1+p)$.
Thus, from the case $\beta=0$, measurable function that blows up near the boundary can be considered as an exterior data $g$.
\end{exam}

We also consider functions(distributions) whose supports are disjoint from $\overline{\cO}$ (see also \cite[Corollary 2.6]{Grubb nonlocal}).

\begin{exam}
  $(i)$ Let $\theta \in (d-1,d-1+p)$, $\sigma:=\alpha p/2-\theta+d$, $\delta\in \bR$, and $g$ belong to either $H_p^\delta(\bR^d)$ or $B_p^\delta(\bR^d)$. Here, for simplicity, we assume $g\in H_p^\delta(\bR^d)$. We show that if $supp(g) \subset \overline{\cO}^c$, then we have $r_{\overline{\cO}^c}g \in \psi^{\alpha/2} H_{p,\theta,\sigma}^\lambda(\overline{\cO}^c)$ for any $\lambda\in\bZ$ satisfying $\lambda<0\wedge \delta$. 
  Take $\eta\in C^\infty(\bR^d)$ such that $\eta=1$ on $supp(g)$, and $supp(\eta)\subset \overline{\cO}^c$.
  By \eqref{21.10.06.0918}, for any $\phi\in C_c^\infty(\overline{\cO}^c)$,
  \begin{align*}
    &\sum_{n=0}^{-\lambda} \int_{\bR^d} |D_x^n (e_{\overline{\cO}^c}\phi\eta)|^{p'} dx 
    \\
    &\leq N(supp(g)) \sum_{n=0}^{-\lambda} \int_{supp(g)} |D_x^n (\phi\eta)|^{p'} d_x^{\alpha p'/2 + \theta'-d}(1+d_x)^{-\alpha p'/2 - \theta' +d} dx 
    \\
    &\leq N \|\psi^{\alpha/2}\phi\|_{H_{p',\theta',\sigma'}^{-\lambda}(\overline{\cO}^c)}^{p'},
  \end{align*}
  where $1/p+1/p'=1, \theta/p+\theta'/p' = d$, and $\sigma/p+\sigma'/p'=0$.
  Therefore, we have $\phi\eta \in H_{p'}^{-\lambda}$ and
  \begin{align*}
    |(r_{\overline{\cO}^c}g,\phi)_{\overline{\cO}^c}| &= |(g,e_{\overline{\cO}^c}\phi\eta)_{\bR^d}| \leq \|g\|_{H_p^\delta} \|e_{\overline{\cO}^c}\phi\eta\|_{H_{p'}^{-\delta}} \leq N\|g\|_{H_p^\delta} \|\psi^{\alpha/2}\phi\|_{H_{p',\theta',\sigma'}^{-\lambda}}.
  \end{align*}
  By Lemma \ref{lem space}$(iv)$, we have $r_{\overline{\cO}^c}g \in H_{p,\theta,\sigma}^\lambda(\overline{\cO}^c)$ with $\sigma=\alpha p/2-\theta+d$.

  Thus, we obtain the solvability of solution to elliptic equation \eqref{elliptic def} with $r_{\overline{\cO}^c}g$. Moreover, due to Proposition \ref{Holder ellip}, if $f=0$, then the solution is infinitely differentiable in $\cO$.

  $(ii)$ 
  Next, we consider distributions having compact support in $\overline{\cO}^c$.
  Let $c_1,c_2>0$ and $g\in H_p^\lambda(\bR^d)$ such that $supp(g)\subset \{x\in \overline{\cO}^c: c_1<d_x<c_2\}$. Then, $r_{\overline{\cO}^c}g\in H_{p,\theta,\sigma}^\lambda(\overline{\cO}^c)$ for any $\theta,\sigma\in\bR$. Indeed, under our assumption, there exist $n_1,n_2\in\bZ$ such that for $n>n_2$ or $n<n_1$,
  \begin{equation*}
    g(e^n\cdot)\zeta_{-n}(e^n\cdot)=0.
  \end{equation*}
  Hence, \eqref{def sobolev} easily yields $r_{\overline{\cO}^c}g\in H_{p,\theta,\sigma}^\lambda(\overline{\cO}^c)$ for any $\theta,\sigma\in\bR$.

  It is well known that $\delta_{x_0}\in H_p^\lambda(\bR^d)$ if $\lambda p<(1-p)d$ (see e.g. \cite[Exercise 13.3.23]{KryLec}). Thus, as a candidate for an exterior data, we can consider $D^{\beta}_x\delta_{x_0}$ for any multi-index $\beta$.
\end{exam}

For the rest of this section we derive some  regularity results of solutions on $\bR^d$.  For $\beta\in \bR$,
define the space
$$
H_p^{\beta}(\fO) := \{ u: u=U \text{ on } \fO \text{ for some } U\in H_p^{\beta}(\bR^d) \}
$$
with the norm  given by
$$
\|u\|_{H_p^{\beta}(\fO)}:=\inf \{\|U\|_{H_p^\beta}: u=r_{\fO}U,  \,U\in H_p^\beta(\bR^d)\}.
$$
Similarly, we define
$$
\tilde{H}_p^{\beta}(\fO) := \{ u: u=r_{\fO}U \text{ for some } U\in H_p^{\beta}(\bR^d), \, supp\, U \subset \overline{\fO} \},
$$
and
$$
\|u\|_{\tilde{H}_p^{\beta}(\fO)}:=\inf \{\|U\|_{H_p^\beta}: u=r_{\fO}U,  \,U\in H_p^\beta(\bR^d), \, supp\, U \subset \overline{\fO}\}.
$$

The following result is proved in Section 3. 

\begin{lem}
\label{lem 3.29}
Let $\cO$ be bounded.

(i) Let $\kappa \geq 0$,  then we have continuous embeddings
$$
H^{\kappa}_{p,d-\kappa p}(\cO) \subset \tilde{H}^{\kappa}_p(\cO)\subset H^{\kappa}_p(\cO),
$$
and
$$
H^{-\kappa}_p(\cO) \subset  H^{-\kappa}_{p,d+\kappa p}(\cO).
$$
Moreover, $H^{\kappa}_{p,d-\kappa p}(\cO) =\tilde{H}^{\kappa}_p(\cO)=H^{\kappa}_p(\cO)$ if  $\kappa<1/p$, and $H^{-\kappa}_p(\cO) =H^{-\kappa}_{p,d+\kappa p}(\cO)$ if $\kappa<1/{p'}$. Here, $p'$ is the H\"older conjugate of $p$.

(ii) Let $0\leq \beta\leq 2$. Then, for any $g\in \tilde{H}^{\beta}_{p}(\overline{\cO}^c)$,
$$
\|\psi^{-\alpha/2}g\|_{L_{p,d-\beta p+\alpha p/2,\beta p}(\overline{\cO}^c)} \approx \|\psi^{-\beta}(1+\psi)^{\beta}g\|_{L_p(\overline{\cO}^c)}\leq N \|g\|_{\tilde{H}^{\beta}_p(\overline{\cO}^c)}.
$$

(iii)  Let $\beta\geq 0$ and
\begin{equation}
  \label{eqn 3.29.1}
\frac{\alpha}{2}+ \frac{1}{p}-1<\beta < \frac{\alpha}{2}+ \frac{1}{p},
\end{equation}
 then for any $u$ such that $r_{\cO}u\in H^{\beta}_{p,d-\beta p}(\cO)$ and $r_{\overline{\cO}^c}u \in \tilde{H}^{\beta}_p(\overline{\cO}^c)$, we have
\begin{equation}
\label{eqn 4.2.9}
\|u\|_{H^{\beta}_p(\bR^d)}\leq N \left(\|r_{\cO}u\|_{H^{\beta}_{p,d-\beta p}(\cO)}+\|r_{\overline{\cO}^c}u\|_{H^{\beta}_p(\overline{\cO}^c)} \right).
\end{equation}
\end{lem}

\begin{rem}
\label{rem 4.1}
If \eqref{eqn 3.29.1} holds, then $\beta-\alpha<1/p$, and therefore by Lemma \ref{lem 3.29}$(i)$, $H^{\beta-\alpha}_p(\cO) \subset H^{\beta-\alpha}_{p, d-(\beta-\alpha)p}(\cO)$.
\end{rem}

\begin{corollary}(Regularity of the elliptic equation on $\bR^d$)
 \label{cor 3.29}
Let $\cO$ be bounded, $\beta \geq 0$ and \eqref{eqn 3.29.1} hold. Then, for any $f \in H^{\beta-\alpha}_{p, d-(\beta-\alpha)p}(\cO)$ and 
$g \in \tilde{H}^{\beta}_{p}(\overline{\cO}^c)$, 
equation \eqref{elliptic def} has a unique solution $u\in\cD'(\bR^d)$ such that $r_{\cO}u\in  H_{p,d-\beta p}^{\beta}(\cO)$, and for this solution  we have
\begin{equation} \label{cor 3.39.2}
\|u\|_{H^{\beta}_p(\bR^d)}  \leq N \left( \|f\|_{H_{p, d-(\beta-\alpha)p}^{\beta-\alpha}(\cO)} + \|g\|_{\tilde{H}^{\beta}_{p}(\overline{\cO}^c)}\right),
\end{equation}
where $N=N(d,p,\alpha,\beta,\cO)$.
\end{corollary}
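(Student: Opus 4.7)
The plan is to apply Theorem \ref{main thm ellip} with a carefully chosen set of parameters and then combine the resulting weighted estimate with the embeddings of Lemma \ref{lem 3.29} to pass to the unweighted norm on $\bR^d$.

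First I would set
\[
\gamma = \beta - \alpha, \quad \theta = d - \beta p + \tfrac{\alpha p}{2}, \quad \lambda = 0, \quad \sigma = \beta p.
\]
A direct computation shows that $\theta \in (d-1, d-1+p)$ is equivalent to $\tfrac{\alpha}{2}+\tfrac{1}{p}-1 < \beta < \tfrac{\alpha}{2}+\tfrac{1}{p}$, which is precisely hypothesis \eqref{eqn 3.29.1}; moreover $\sigma = \beta p \geq 0 > -\theta-\alpha p/2$ is immediate. Theorem \ref{main thm ellip} then produces a unique weak solution $u \in \psi^{\alpha/2} H^{\beta}_{p,\theta}(D)$ satisfying
\[
\|\psi^{-\alpha/2} u\|_{H^{\beta}_{p,\theta}(D)} \leq C \bigl( \|\psi^{\alpha/2} f\|_{H^{\beta-\alpha}_{p,\theta}(D)} + \|\psi^{-\alpha/2} g\|_{L_{p,\theta,\sigma}(\overline{D}^c)}\bigr).
\]

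Next, using Lemma \ref{lem space}$(iii)$ to absorb the powers $\psi^{\pm\alpha/2}$ into the subscripts, the left side becomes $\|u\|_{H^{\beta}_{p,d-\beta p}(D)}$ and the $f$-term becomes $\|f\|_{H^{\beta-\alpha}_{p,d-(\beta-\alpha)p}(D)}$. For the $g$-term I invoke Lemma \ref{lem 3.29}$(ii)$, whose range $0 \leq \beta \leq 2$ is compatible since \eqref{eqn 3.29.1} forces $\beta < \tfrac{\alpha}{2}+\tfrac{1}{p} \leq 2$; this gives
\[
\|\psi^{-\alpha/2} g\|_{L_{p,\theta,\sigma}(\overline{D}^c)} \leq C \|g\|_{\tilde{H}^{\beta}_p(\overline{D}^c)}.
\]
Combining these ingredients produces the intermediate estimate
\[
\|u\|_{H^{\beta}_{p,d-\beta p}(D)} \leq C \bigl(\|f\|_{H^{\beta-\alpha}_{p,d-(\beta-\alpha)p}(D)} + \|g\|_{\tilde{H}^{\beta}_p(\overline{D}^c)}\bigr).
\]

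To pass from the $D$-estimate to the full $\bR^d$-estimate, I apply Lemma \ref{lem 3.29}$(iii)$ to $u$. Since $u = g$ on $\overline{D}^c$ and $g \in \tilde{H}^{\beta}_p(\overline{D}^c)$, the hypothesis of $(iii)$ is satisfied, and $\|u\|_{H^\beta_p(\overline{D}^c)} \leq \|g\|_{\tilde{H}^\beta_p(\overline{D}^c)}$ follows since the $\tilde{H}$-norm dominates the $H$-norm by definition. Then \eqref{eqn 4.2.9} yields
\[
\|u\|_{H^\beta_p(\bR^d)} \leq C \bigl(\|u\|_{H^\beta_{p, d-\beta p}(D)} + \|g\|_{\tilde{H}^\beta_p(\overline{D}^c)}\bigr),
\]
which combined with the preceding estimate gives \eqref{cor 3.39.2}. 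Uniqueness follows directly from the uniqueness clause of Theorem \ref{main thm ellip}, since any two solutions in $H^{\beta}_{p,d-\beta p}(D)$ correspond, after multiplication by $\psi^{-\alpha/2}$, to elements of $H^{\beta}_{p,\theta}(D)$ solving the same equation.

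The conceptual work is small; the main obstacle is entirely bookkeeping. One must verify that the power-weight shifts from Lemma \ref{lem space}$(iii)$ line up so that the three weighted norms appearing in Theorem \ref{main thm ellip} collapse exactly to $H^{\beta}_{p,d-\beta p}$, $H^{\beta-\alpha}_{p,d-(\beta-\alpha)p}$, and (via Lemma \ref{lem 3.29}$(ii)$) $\tilde{H}^{\beta}_p$. In particular, there is zero slack in the constraint $\theta \in (d-1,d-1+p)$: that interval is precisely \eqref{eqn 3.29.1}, so any misalignment of exponents would shrink or void the admissible range of $\beta$.
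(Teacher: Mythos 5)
Your proposal is correct and follows essentially the same route as the paper's proof: the same parameter choice $\gamma=\beta-\alpha$, $\theta=d-\beta p+\alpha p/2$, $\sigma=\beta p$ in Theorem \ref{main thm ellip}, the same use of Lemma \ref{lem space}$(iii)$ to translate the weighted norms, Lemma \ref{lem 3.29}$(ii)$ for the $g$-term, and Lemma \ref{lem 3.29}$(iii)$ to pass to the unweighted $\bR^d$-estimate. Your extra verifications (that $\sigma=\beta p>-\theta-\alpha p/2$ and that $\beta<2$) are details the paper leaves implicit, and they check out.
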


\begin{proof}
If \eqref{eqn 3.29.1} holds, then $\theta_0:=d-\beta p+\alpha p/2 \in (d-1, d-1+p)$.  Note that $\psi^{\alpha/2}f\in H^{\beta-\alpha}_{p,\theta_0}(\cO)$ if and only if $ f\in H^{\beta-\alpha}_{p,d-(\beta-\alpha)p}(\cO)$. Also, by Lemma \ref{lem 3.29}$(ii)$,
$$
\|\psi^{-\alpha/2}g\|_{L_{p,\theta_0,\beta p}(\overline{\cO}^c)}\leq N \|g\|_{\tilde{H}^{\beta}_p(\overline{\cO}^c)}<\infty.
$$
Thus, applying 
 Theorem  \ref{main thm ellip} with $\gamma=\beta-\alpha$, $\theta=\theta_0$ and $\sigma=\beta p$,   we have a unique solution $u\in\cD'(\bR^d)$ such that $u|_{\cO}\in H_{p,d-\beta p}^{\beta}(\cO)$. Furthermore, by 
 \eqref{eqn 4.2.9}  and Theorem \ref{main thm ellip},
 \begin{align*}
 \|u\|_{H^{\beta}_p(\bR^d)} &\leq N \left(\|r_{\cO}u\|_{H^{\beta}_{p,d-\beta p}(\cO)}+\|g\|_{\tilde{H}^{\beta}_p(\overline{\cO}^c)} \right) 
 \\
 &\leq N \left (\|f\|_{H_{p, d-(\beta-\alpha)p}^{\beta-\alpha}(\cO)}  +\|\psi^{-\alpha/2}g\|_{L_{p,\theta_0,\beta p}(\overline{\cO}^c)} 
 +\|g\|_{\tilde{H}^{\beta}_p(\overline{\cO}^c)} \right)
 \\
 &\leq N \left( \|f\|_{H_{p, d-(\beta-\alpha)p}^{\beta-\alpha}(\cO)} + \|g\|_{\tilde{H}^{\beta}_{p}(\overline{\cO}^c)}\right).
 \end{align*}
 The corollary is proved.
 \end{proof}

\begin{rem}
A version of \eqref{cor 3.39.2} is introduced in \cite{Grubb nonlocal}. More precisely, in  \cite{Grubb nonlocal}  it is proved that if $\cO$ is a bounded $C^{\infty}$ domain and \eqref{eqn 3.29.1} holds, then
$$
\|u\|_{H^{\beta}_p(\bR^d)}  \leq N \left( \|f\|_{H_{p}^{\beta-\alpha}(\cO)} + \|g\|_{H^{\beta}_{p}(\overline{\cO}^c)}\right),
$$
provided that $f\in H^{\beta-\alpha}_p(\cO)$ and $g\in H^{\beta}_p(\overline{\cO}^c)$.  By Remark \ref{rem 4.1}, 
 our condition on $f$ (resp. $g$) is  slightly weaker (resp. stronger) than the one in \cite{Grubb nonlocal}.  

\end{rem}

Theorem \ref{main thm para} and the argument in the proof of Corollary \ref{cor 3.29} yield the following. We assume $u(0,\cdot)=0$ for simplicity.

\begin{corollary}(Regularity of the parabolic equation on $\bR^d$)
Let $\cO$ be bounded, $\beta \geq 0$ and \eqref{eqn 3.29.1} hold. Then, for $f \in \bH^{\beta-\alpha}_{p, d-(\beta-\alpha)p}(\cO,T)$ and 
$g \in \tilde{\bH}^{\beta}_{p}(\overline{\cO}^c,T):=L_p((0,T); \tilde{H}^{\beta}_p(\overline{\cO}^c))$, 
 equation \eqref{para def} given with zero initial data has a unique  solution $u$ such that $r_{(0,T)\times \cO}u\in \bH_{p,\theta_0-\alpha p/2}^{\beta}(\cO,T)$, and for this solution we have
\begin{equation*}
\|u\|_{L_p((0,T); H^{\beta}_p(\bR^d))}  \leq N \left( \|f\|_{\bH_{p, d-(\beta-\alpha)p}^{\beta-\alpha}(\cO,T)} + \|g\|_{\tilde{\bH}^{\beta}_{p}(\overline{\cO}^c,T)}\right),
\end{equation*}
where $N=N(d,p,\alpha,\beta,\cO)$.
\end{corollary}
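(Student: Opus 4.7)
The plan is to follow closely the argument used for the elliptic corollary, combining Theorem \ref{main thm para} with the pointwise-in-$t$ global embedding provided by Lemma \ref{lem 3.29}. First I set
$$
\gamma := \beta - \alpha, \qquad \theta_0 := d - \beta p + \alpha p /2, \qquad \lambda := 0, \qquad \sigma := \beta p.
$$
Condition \eqref{eqn 3.29.1} gives $\theta_0 \in (d-1,\, d-1+p)$, and the requirement $\sigma > -\theta_0 - \alpha p/2$ reduces to $d + \alpha p > 0$, so all parameter hypotheses of Theorem \ref{main thm para} are satisfied.

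Next I translate the data into the weighted spaces used by Theorem \ref{main thm para}. Lemma \ref{lem space}(iii) yields
$$
\|\psi^{\alpha/2} f\|_{\bH^{\beta-\alpha}_{p,\theta_0}(D,T)} \approx \|f\|_{\bH^{\beta-\alpha}_{p,\, d-(\beta-\alpha)p}(D,T)}.
$$
For $g$, I apply Lemma \ref{lem 3.29}(ii) for almost every $t \in (0,T)$ and integrate in time, obtaining
$$
\|\psi^{-\alpha/2} g\|_{\bH^{0}_{p,\theta_0,\sigma}(\overline{D}^c,T)} \leq C \|g\|_{\tilde{\bH}^{\beta}_{p,\,d-\beta p}(\overline{D}^c,T)}.
$$
Theorem \ref{main thm para} with $u_0 = 0$ then produces a unique weak solution $u \in \frH^{\beta}_{p,\theta_0}(D,T)$ of \eqref{para def}. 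In particular $\psi^{-\alpha/2} u \in \bH^{\beta}_{p,\theta_0}(D,T)$, equivalently $u \in \bH^{\beta}_{p,\theta_0 - \alpha p/2}(D,T) = \bH^{\beta}_{p,\,d-\beta p}(D,T)$, together with
$$
\|u\|_{\bH^{\beta}_{p,\, d-\beta p}(D,T)} \leq C \bigl(\|f\|_{\bH^{\beta-\alpha}_{p,\, d-(\beta-\alpha)p}(D,T)} + \|g\|_{\tilde{\bH}^{\beta}_{p,\, d-\beta p}(\overline{D}^c,T)}\bigr).
$$

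Finally, to upgrade to an $L_p$-in-time bound on $\|u\|_{H^{\beta}_p(\bR^d)}$, I invoke Lemma \ref{lem 3.29}(iii) for almost every fixed $t \in (0,T)$. Using $u(t,\cdot) = g(t,\cdot)$ on $\overline{D}^c$ and the continuous embedding $\tilde{H}^{\beta}_p(\overline{D}^c) \subset H^{\beta}_p(\overline{D}^c)$ from Lemma \ref{lem 3.29}(i),
$$
\|u(t,\cdot)\|_{H^{\beta}_p(\bR^d)}^{p} \leq C \bigl( \|u(t,\cdot)\|_{H^{\beta}_{p,\, d-\beta p}(D)}^{p} + \|g(t,\cdot)\|_{\tilde{H}^{\beta}_p(\overline{D}^c)}^{p} \bigr).
$$
Integrating over $t \in (0,T)$ and combining with the estimate from the previous step gives the desired bound. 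The only real obstacle is the bookkeeping of the parameter matching; the analysis itself is entirely pointwise-in-time, so no additional time regularity beyond $L_p$-integrability is needed to pass from the weighted Sobolev estimate in $D$ to the global estimate on $\bR^d$.
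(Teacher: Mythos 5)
Your proof is correct and follows essentially the same route as the paper, which explicitly derives this corollary by combining Theorem \ref{main thm para} with the parameter choices $\gamma=\beta-\alpha$, $\theta_0=d-\beta p+\alpha p/2$, $\sigma=\beta p$ and the pointwise-in-$t$ application of Lemma \ref{lem 3.29}(ii) and (iii), exactly as in the proof of the elliptic Corollary \ref{cor 3.29}. All parameter verifications (in particular $\theta_0\in(d-1,d-1+p)$ and $\sigma>-\theta_0-\alpha p/2$) and the final integration in time are handled correctly.
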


\section{Some properties of function spaces} \label{sec prop}

In this section, we introduce some properties of function spaces and  provide the proofs of Lemmas \ref{thm frac def} and \ref{lem 3.29}. Throughout this section, $\fO$ denotes either $\cO$ or $\overline{\cO}^c$. 

The following result  is taken from  \cite{Lototsky} if $\sigma=0$, and the  case $\sigma\neq 0$ also can be  obtained by repeating the proof of \cite[Theorem 4.1]{Lototsky}.

\begin{lem} \label{lem deriv}
The following conditions are equivalent:

(i) $u\in H_{p,\theta,\sigma}^\gamma(\fO)$

(ii) $u\in H_{p,\theta,\sigma}^{\gamma-1}(\fO)$ and $\psi D_x u\in H_{p,\theta,\sigma}^{\gamma-1}(\fO)$

(iii) $u\in H_{p,\theta,\sigma}^{\gamma-1}(\fO)$ and $D_x (\psi u) \in H_{p,\theta,\sigma}^{\gamma-1}(\fO)$.
\vspace{1mm}

Under any of these three conditions, we have
\begin{eqnarray*}
&\|u\|_{H_{p,\theta,\sigma}^\gamma(\fO)} \approx \left(\|u\|_{H_{p,\theta,\sigma}^{\gamma-1}(\fO)}+\|\psi D_x u\|_{H_{p,\theta,\sigma}^{\gamma-1}(\fO)}\right)
\\
&\approx \left(\|u\|_{H_{p,\theta,\sigma}^{\gamma-1}(\fO)}+\|D_x (\psi u)\|_{H_{p,\theta,\sigma}^{\gamma-1}(\fO)}\right).
\end{eqnarray*}
Furthermore, all the claims   hold  with  $B_{p,\theta}^\gamma(\fO)$. 
\end{lem}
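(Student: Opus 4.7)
The plan is to reduce to the case $\sigma=0$ already handled by Lototsky, and then use the Leibniz rule to pass between (ii) and (iii). First, by Lemma \ref{lem space}(iii) applied with $\delta_1=0$ and $\delta_2=-\sigma/p$, we have $u\in H^\gamma_{p,\theta,\sigma}(\fD)$ iff $v:=(1+\psi)^{\sigma/p}u \in H^\gamma_{p,\theta}(\fD)$, with equivalent norms, and similarly at order $\gamma-1$. To transfer condition (ii) for $u$ to the analogous condition for $v$, I would expand
\[
\psi D_x u = (1+\psi)^{-\sigma/p}\psi D_x v - \tfrac{\sigma}{p}(1+\psi)^{-\sigma/p-1}\psi(D_x\psi)\,v,
\]
and argue that each smooth coefficient acts as a bounded pointwise multiplier on $H^{\gamma-1}_{p,\theta}(\fD)$ (since $D_x\psi$ and its higher derivatives are controlled by the defining property $|d_x D_x^{m+1}\psi|\leq C$, and $(1+\psi)^{-\sigma/p}$ is smooth and bounded). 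The same reasoning handles (iii). Thus it suffices to treat $\sigma=0$.

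For $\sigma=0$, I would repeat the localized argument of Theorem~4.1 of Lototsky. Writing $u_n(x):=\zeta_{-n}(e^nx)u(e^nx)$, the defining norm \eqref{def sobolev} reads $\|u\|^p_{H^\gamma_{p,\theta}(\fD)}=\sum_n e^{n\theta}\|u_n\|^p_{H^\gamma_p}$. Standard Sobolev theory on $\bR^d$ gives $\|u_n\|_{H^\gamma_p}\approx \|u_n\|_{H^{\gamma-1}_p}+\sum_i\|D_iu_n\|_{H^{\gamma-1}_p}$. Expanding $D_iu_n$ by Leibniz and using that $e^{-n}\approx \psi$ on the (scaled) support of $\zeta_{-n}(e^n\cdot)$, I would identify $e^{-n}D_iu_n$ with the scaled localization of $\psi D_i u$, plus a commutator error involving $D_i\zeta_{-n}$ that is controlled by $\|u\|_{H^{\gamma-1}_{p,\theta}(\fD)}$ via property \eqref{zeta prop 2}. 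Summing over $n$ yields (i)$\Leftrightarrow$(ii). The equivalence (ii)$\Leftrightarrow$(iii) then follows at once from the product rule $D_x(\psi u)=(D_x\psi)u+\psi D_x u$ together with the multiplier property of $D_x\psi$ on $H^{\gamma-1}_{p,\theta}(\fD)$.

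The main obstacle is verifying that smooth functions such as $D_x\psi$, $(1+\psi)^{\sigma/p}$, and $(1+\psi)^{-\sigma/p-1}\psi$ act as bounded pointwise multipliers on $H^\gamma_{p,\theta,\sigma}(\fD)$ for arbitrary real $\gamma$, including negative values. For $\gamma\in\bN_+$ this is immediate from the explicit norm \eqref{21.10.06.0918}; for general $\gamma$ I would use \eqref{def sobolev}, check that after the scaling $x\mapsto e^nx$ the rescaled multipliers have $C^\infty_b$ seminorms uniformly bounded in $n$ on an enlargement of the support of $\zeta_{-n}(e^n\cdot)$, and then invoke the classical pointwise multiplication theorem on $H^\gamma_p(\bR^d)$ term by term. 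Negative $\gamma$ can additionally be handled by duality using Lemma \ref{lem space}(iv). The same argument, with $B^\gamma_p$ replacing $H^\gamma_p$ throughout, delivers the Besov-space version, since neither the localization procedure nor the multiplier estimates distinguish between the two scales.
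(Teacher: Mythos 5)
Your argument is correct and is essentially the route the paper takes: the paper gives no detailed proof, simply citing Lototsky's Theorem 4.1 for $\sigma=0$ and noting that the case $\sigma\neq 0$ follows by repeating that localized proof (the factor $(1+e^n)^{\sigma}$ is constant within each term of the sum in \eqref{def sobolev} and passes through unchanged). Your preliminary reduction to $\sigma=0$ via $v=(1+\psi)^{\sigma/p}u$ together with the pointwise multiplier bounds is a valid, if slightly more roundabout, way of achieving the same thing.
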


The following result is a version of \cite[Lemma A.5]{Seo}, which will be used to handle negative order regularity of solutions.

\begin{lem} \label{negative rep}
There exist operators $\Lambda_0, \Lambda_2, \cdots, \Lambda_d$ such that

 (i) $\Lambda_0 : H_{p,\theta,\sigma}^{\gamma}(\fO)\to H_{p,\theta,\sigma}^{\gamma+1}(\fO)$ and $\Lambda_i : H_{p,\theta,\sigma}^{\gamma}(\fO)\to H_{p,\theta-p,\sigma}^{\gamma+1}(\fO)$ $(i=1,\cdots,d)$ for any  $\gamma,\theta,\sigma \in \bR$ and $p\in(1,\infty)$;
 
 (ii)   for any $u\in H_{p,\theta,\sigma}^{\gamma}(\fO)$,
\begin{align} \label{eq. 1003-2}
u=\Lambda_0 u + \sum_{i=1}^d D_i \Lambda_i u
\end{align}
and
\begin{align*}
\|u\|_{H_{p,\theta,\sigma}^{\gamma}(\fO)} \approx \left( \|\Lambda_0 u\|_{H_{p,\theta,\sigma}^{\gamma+1}(\fO)} + \sum_{i=1}^d \|\Lambda_i u\|_{H_{p,\theta-p,\sigma}^{\gamma+1}(\fO)} \right).
\end{align*}
Moreover, the above assertions  hold true with $B_{p,\theta}^\gamma(\fO)$. 
\end{lem}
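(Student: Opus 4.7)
The plan is to construct the decomposition via the inverse of a weighted elliptic operator already available in this scale of spaces. Let $c>0$ be large enough that
$$
L := \psi^{2}\Delta - c
$$
is a bounded isomorphism $L:H^{\gamma+2}_{p,\theta,\sigma}(\fD)\to H^{\gamma}_{p,\theta,\sigma}(\fD)$ for every $\gamma\in\bR$; this is a special case of \cite[Theorem 5.1]{Lototsky} and was already invoked in the remark after Lemma \ref{thm frac def}. Given $u\in H^{\gamma}_{p,\theta,\sigma}(\fD)$, set $v:=L^{-1}u\in H^{\gamma+2}_{p,\theta,\sigma}(\fD)$. Rewriting $\psi^{2}D_i^{2}v=D_i(\psi^{2}D_iv)-2\psi(D_i\psi)D_iv$ gives the identity
$$
u \;=\; \psi^{2}\Delta v-cv \;=\; -cv-2\sum_{i=1}^{d}\psi(D_i\psi)D_iv+\sum_{i=1}^{d}D_i\bigl(\psi^{2}D_iv\bigr),
$$
which naturally suggests the definitions
$$
\Lambda_0 u := -cv-2\sum_{i=1}^{d}\psi(D_i\psi)D_iv,\qquad \Lambda_i u := \psi^{2}D_iv\quad(i=1,\dots,d),
$$
so that \eqref{eq. 1003-2} holds by construction.

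Next I would verify part (i) by combining Lemma \ref{lem space}(ii) and (iii). Since $v\in H^{\gamma+2}_{p,\theta,\sigma}(\fD)$, each $D_iv\in H^{\gamma+1}_{p,\theta+p,\sigma}(\fD)$, and multiplication by $\psi^{2}$ lowers the weight exponent by $2p$ without changing the differentiability index, yielding $\Lambda_i u\in H^{\gamma+1}_{p,\theta-p,\sigma}(\fD)$. For $\Lambda_0u$, the term $cv$ trivially lies in $H^{\gamma+1}_{p,\theta,\sigma}(\fD)$, while $\psi(D_i\psi)D_iv$ is obtained from $D_iv\in H^{\gamma+1}_{p,\theta+p,\sigma}(\fD)$ by multiplication by a single factor of $\psi$ together with the bounded factor $D_i\psi$; this puts $\Lambda_0u$ into $H^{\gamma+1}_{p,\theta,\sigma}(\fD)$. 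Continuous dependence on $u$ is immediate from the boundedness of $L^{-1}$ and the multiplication/differentiation estimates just used.

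The norm equivalence in (ii) is then an immediate consequence. One direction is exactly the continuity established above. For the reverse, the representation \eqref{eq. 1003-2}, the trivial inclusion $H^{\gamma+1}_{p,\theta,\sigma}(\fD)\subset H^{\gamma}_{p,\theta,\sigma}(\fD)$, and Lemma \ref{lem space}(ii) (applied in the form $\|D_i\Lambda_iu\|_{H^{\gamma}_{p,\theta,\sigma}(\fD)}\leq C\|\Lambda_iu\|_{H^{\gamma+1}_{p,\theta-p,\sigma}(\fD)}$) give
$$
\|u\|_{H^{\gamma}_{p,\theta,\sigma}(\fD)}\leq C\,\|\Lambda_0u\|_{H^{\gamma+1}_{p,\theta,\sigma}(\fD)}+C\sum_{i=1}^{d}\|\Lambda_iu\|_{H^{\gamma+1}_{p,\theta-p,\sigma}(\fD)}.
$$
The Besov version is proved by the identical construction, using the Besov analogues of Lemmas \ref{lem deriv} and \ref{lem space} and the corresponding isomorphism for $L$ on $B^{\gamma}_{p,\theta}(\fD)$. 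The only mildly delicate point is verifying that the coefficient $D_i\psi$ acts as a bounded multiplier on $H^{\gamma+1}_{p,\theta,\sigma}(\fD)$ (and on its Besov counterpart) for arbitrary $\gamma\in\bR$; this is the main technical obstacle, but it follows from the asymptotic bounds $|d_x^{k}D^{k+1}_x\psi|\leq C(k)$ on $\fD$ via the standard multiplier test applied on each localized piece $\zeta_{-n}(e^{n}\cdot)$ appearing in the definition \eqref{def sobolev}, reducing the argument to a uniform bound on a fixed reference domain.
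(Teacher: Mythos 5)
Your construction is correct but follows a genuinely different route from the paper. The paper works at the level of the unweighted spaces: it starts from the elementary identity $\mathrm{id}=L_0+\sum_i D_iL_i$ with $L_0=(1-\Delta)^{-1}$, $L_i=-D_i(1-\Delta)^{-1}$ on $H^\gamma_p(\bR^d)$ (and $B^\gamma_p(\bR^d)$), and then assembles $\Lambda_0,\Lambda_i$ by summing the localized, rescaled pieces $\eta_{-m}(\cdot)\,L_j[\zeta_{-m}(e^m\cdot)u(e^m\cdot)](e^{-m}\cdot)$, checking boundedness directly from the definition \eqref{def sobolev} and recovering \eqref{eq. 1003-2} from $\sum_m\zeta_{-m}=1$. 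You instead invert the degenerate weighted operator $L=\psi^2\Delta-c$ globally and read the decomposition off the product rule $\psi^2D_i^2v=D_i(\psi^2D_iv)-2\psi(D_i\psi)D_iv$; the weight bookkeeping via Lemma \ref{lem space}(ii),(iii) and the reverse inequality are then handled exactly as in the paper. Your argument is shorter and conceptually cleaner, but it trades the paper's self-contained localization for two external inputs: first, the isomorphism property of $L$ on $H^{\gamma}_{p,\theta,\sigma}(\fD)$ for all real $\gamma$ (available from \cite[Theorem 5.1]{Lototsky} and already invoked in the paper, so acceptable), and second — and this is the one point you should not wave away — the \emph{Besov} analogue of that isomorphism, which is not in the cited reference and would itself require an argument (e.g.\ real interpolation of the $H^{\gamma}_{p,\theta}$ scale). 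The paper's approach avoids this entirely because $L_0,L_i$ are trivially bounded on $B^\gamma_p(\bR^d)$, so the same localization works verbatim for both scales. Your remaining technical point, that $D_i\psi$ is a pointwise multiplier on $H^{\gamma+1}_{p,\theta,\sigma}(\fD)$ for arbitrary $\gamma$, is handled correctly: it reduces, via the localization in \eqref{def sobolev} and the multiplier bound $\|av\|_{H^\gamma_p}\leq C|a|_{C^n}\|v\|_{H^\gamma_p}$ quoted in the paper, to the uniform bounds on $d_x^{m}D^{m+1}_x\psi$.
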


\begin{proof}
The proofs for $B_{p,\theta}^{\gamma}(\fO)$ are similar to those for $H_{p,\theta,\sigma}^{\gamma}(\fO)$, and we only prove the claims for $H_{p,\theta,\sigma}^{\gamma}(\fO)$.

Consider the operators $L_0,L_1,\dots,L_d$ on $H_p^\gamma$ defined as $L_0:=(1-\Delta)^{-1}$ and $L_i:=-D_i(1-\Delta)^{-1}$ $(i=1,\dots,d)$.
Then, $L_0 + D_i L_i$ is the identity map on $H_p^\gamma$,  and for any $v\in H_p^\gamma$,
\begin{align} \label{eq. 1003}
\|v\|_{H_p^\gamma} =\|L_0v\|_{H^{\gamma+2}_p} \approx  \left( \|L_0v\|_{H_p^{\gamma+1}} + \sum_{i=1}^d \|L_i v\|_{H_p^{\gamma+1}} \right).
\end{align}

Take a collection $\{\zeta_n\in C^{\infty}_c(\fO): n\in \bZ\}$  satisfying \eqref{zeta prop 1}-\eqref{zeta prop 3} and $\sum_{n\in\bZ} \zeta_n(x) =1$ in $\fO$.
 We also take $\{\eta_n \in C^{\infty}_c(\fO): n\in \bZ\}$ satisfying \eqref{zeta prop 1}-\eqref{zeta prop 3} (possibly with different $(k_1,k_2)$) and $\eta_n=1$ on the support of $\zeta_n$. Consequently, $\zeta_n \eta_n=\zeta_n$. 
 
 For $u\in H_{p,\theta,\sigma}^\gamma(\fO)$, define
\begin{align} \label{ineq 1105-1}
\Lambda_0 u(x) :=& \sum_{m\in\bZ} \eta_{-m}(x) L_0[\zeta_{-m}(e^m\cdot)u(e^m\cdot)](e^{-m}x) \nonumber
\\
&-\sum_{i=1}^d \sum_{m\in\bZ} e^m (D_i \eta_{-m})(x) L_i[\zeta_{-m}(e^m\cdot)u(e^m\cdot)](e^{-m}x),
\end{align}
and
\begin{align} \label{ineq 1105-2}
\Lambda_i u(x) := \sum_{m\in\bZ} e^m \eta_{-m}(x) L_i[\zeta_{-m}(e^m\cdot)u(e^m\cdot)](e^{-m}x), \quad i=1,\dots,d.
\end{align}
We first show that these operators are well defined and
\begin{align} \label{ineq. 1003-1}
\|\Lambda_0 u\|_{H_{p,\theta,\sigma}^{\gamma+1}(\fO)} + \sum_{i=1}^d \|\Lambda_i u\|_{H_{p,\theta-p,\sigma}^{\gamma+1}(\fO)} \leq N \|u\|_{H_{p,\theta,\sigma}^{\gamma}(\fO)}.
\end{align}
To show this, due to the similarity, we only control the term
$$
\sum_{m\in\bZ} \eta_{-m}(x) L_0[\zeta_{-m}(e^m\cdot)u(e^m\cdot)](e^{-m}x).
$$
For each $n\in \bZ$, denote $\Gamma(n):=\{m\in\bZ : \zeta_{n}(x)\eta_{m}(x)\neq0 \text{ for some } x\in \fO\}$.
Then, the number of elements in $\Gamma(n)$ is bounded by some constant $l\in\bN$, which is independent of $n$.
Thus, by \eqref{zeta prop 2} and the result on a pointwise multiplier (see e.g. \cite[Lemma 5.2]{kry99analytic}),
\begin{align} \label{ineq. 1007}
& \bigg\|\zeta_{-n}(e^nx)\sum_{m\in\bZ} \eta_{-m}(e^nx) L_0[\zeta_{-m}(e^m\cdot)u(e^m\cdot)](e^{n-m}x)\bigg\|_{H_p^\gamma} \nonumber
\\
&\leq \bigg\| \zeta_{-n}(e^nx)\sum_{m=n-l}^{n+l} \eta_{-m}(e^nx) L_0[\zeta_{-m}(e^m\cdot)u(e^m\cdot)](e^{n-m}x) \bigg\|_{H_p^\gamma} \nonumber
\\
&\leq N \sum_{m=n-l}^{n+l} \|L_0[\zeta_{-m}(e^m\cdot)u(e^m\cdot)](e^{n-m}x)\|_{H_p^{\gamma+1}}.
\end{align}
Then, by the result on change of variables in $H_p^\gamma$ (see. e.g \cite[Theorem 4.3.2]{Triebel2}),
\begin{align*}
&\sum_{m=n-l}^{n+l} \|L_0[\zeta_{-m}(e^m\cdot)u(e^m\cdot)](e^{n-m}x)\|_{H_p^{\gamma+1}} 
\\
&\leq N \sum_{m=n-l}^{n+l} \|L_0[\zeta_{-m}(e^m\cdot)u(e^m\cdot)]\|_{H_p^{\gamma+1}}
\\
&\leq N \sum_{m=n-l}^{n+l} \|\zeta_{-m}(e^m\cdot)u(e^m\cdot)\|_{H_p^{\gamma}}.
\end{align*}
For the last inequality we used \eqref{eq. 1003}. Therefore, this and \eqref{ineq. 1007} yield
\begin{align*}
&\bigg\| \sum_{m\in\bZ} \eta_{-m}(x) L_0[\zeta_{-m}(e^m\cdot)u(e^m\cdot)](e^{-m}x) \bigg\|^p_{H^{\gamma+1}_{p,\theta,\sigma}(\fO)}
\\
&= \sum_{n\in\bZ} e^{n\theta} (1+e^n)^\sigma \bigg\|\zeta_{-n}(e^nx)\sum_{m\in\bZ} \eta_{-m}(e^nx) L_0[\zeta_{-m}(e^m\cdot)u(e^m\cdot)](e^{n-m}x)\bigg\|_{H_p^{\gamma+1}}^p
\\
&\leq N \sum_{n\in\bZ} e^{n\theta} (1+e^n)^\sigma \sum_{m=n-l}^{n+l} \|\zeta_{-m}(e^m\cdot)u(e^m\cdot)\|_{H_p^{\gamma}}^p
\leq N \|u\|^p_{H_{p,\theta,\sigma}^{\gamma}(\fO)}.
\end{align*}

Next, we show \eqref{eq. 1003-2}. Since $L_0 + D_i L_i$ is the identity map on $H_p^\gamma$, 
\begin{align*} 
\Lambda_0 u(x) + \sum_{i=1}^d D_i \Lambda_i u(x) &= \sum_{m\in\bZ} \eta_{-m}(x) (L_0 + D_i L_i)[\zeta_{-m}u(e^m\cdot)](e^{-m}x)
\\
&= \sum_{m\in\bZ} \eta_{-m} (x) \zeta_{-m} (x) u(x) = \sum_{m\in\bZ} \zeta_{-m} (x) u(x) = u(x).
\end{align*}
The reverse inequality of \eqref{ineq. 1003-1} can be easily proved  by using identity \eqref{eq. 1003-2} and Lemma \ref{lem deriv}.   Therefore, the lemma is proved.
\end{proof}

Next, we present some auxiliary results regarding the operator $\Delta^{\alpha/2}$ on $\fO$. 
 Note that, for a function $\phi\in C^{\infty}_c(\fO)$, $\Delta^{\alpha/2}(e_{\fO}\phi)$, defined as in \eqref{def frac}, belongs to $L_\infty(\bR^d)$, and thus $r_{\overline{\fO}^c}\Delta^{\alpha/2}(e_{\fO}\phi) \in \cD'(\overline{\fO}^c)$.

\begin{lem} \label{lem out}
Let $d-1-\alpha p/2<\theta<d-1+p+\alpha p/2$ and $\gamma,\lambda\in\bR$. 

(i)  If $\cO$ is bounded and $\sigma<-\theta+dp+\alpha p/2$, then for any $\phi\in C_c^\infty(\cO)$,
\begin{align} \label{ineq 1030}
\|\psi^{\alpha/2}r_{\overline{\cO}^c}\Delta^{\alpha/2}(e_{\cO}\phi)\|_{H_{p,\theta,\sigma}^\lambda(\overline{\cO}^c)} \leq N \|\psi^{-\alpha/2}\phi\|_{H_{p,\theta}^\gamma(\cO)},
\end{align}
where $N=N(d,p,\gamma,\alpha,\theta,\cO)$.

(ii) If $\cO$ is  bounded  and $\sigma>-\theta-\alpha p/2$, then for any $\phi\in C_c^\infty(\overline{\cO}^c)$,
\begin{align*}
\|\psi^{\alpha/2}r_{\cO}\Delta^{\alpha/2}(e_{\overline{\cO}^c}\phi)\|_{H_{p,\theta}^\gamma(\cO)} \leq N \|\psi^{-\alpha/2}\phi\|_{H_{p,\theta,\sigma}^\lambda(\overline{\cO}^c)},
\end{align*}
where $N=N(d,p,\gamma,\alpha,\theta,\cO)$.

(iii) If $\cO$ is a half space,  then inequality \eqref{ineq 1030} holds provided that  $\sigma=0$.
\end{lem}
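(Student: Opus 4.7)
\textbf{Plan for Lemma \ref{lem out}.}
The three parts share the same skeleton, so I focus on part (i) and indicate the modifications for (ii) and (iii). The starting observation is that for $\phi \in C_c^\infty(D)$ and $x \in \overline{D}^c$ the principal value in \eqref{def frac} collapses (since $\phi(x)=0$) to the absolutely convergent integral
\[
\Delta^{\alpha/2}\phi(x) = c_d \int_D \frac{\phi(y)}{|x-y|^{d+\alpha}}\,dy,
\]
which is $C^\infty$ in $x$ on $\overline{D}^c$. Differentiating under the integral and invoking the elementary geometric fact that any segment joining $y\in D$ to $x\in \overline{D}^c$ must cross $\partial D$, I get $|x-y|\geq d_x+d_y$ and hence $|D_x^\beta \Delta^{\alpha/2}\phi(x)| \leq C_\beta \int_D |\phi(y)|(d_x+d_y)^{-d-\alpha-|\beta|}\,dy$.

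\textbf{Step 1 (Reduction to $\gamma,\lambda\in\bN_+$).} For $\gamma<0$, I apply Lemma \ref{negative rep} to $\phi$ in $H^\gamma_{p,\theta}(D)$ to write $\phi = \Lambda_0\phi + \sum_i D_i\Lambda_i\phi$, and transfer each $D_i$ onto the kernel by integration by parts (legitimate because the kernel is $C^\infty$ on the disjoint pair $\overline{D}^c \times \mathrm{supp}\,\phi$). Iterating this yields a non-negative integer $\gamma$. A parallel application of Lemma \ref{lem deriv} on the $\overline{D}^c$ side converts the $H^\lambda_{p,\theta,\sigma}(\overline{D}^c)$ norm to integer order. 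Thus it suffices to prove the estimate for $\gamma,\lambda\in \bN_+$.

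\textbf{Step 2 (Dyadic cutoff).} Writing $\phi = \sum_m \zeta_{-m}\phi$, each piece $\zeta_{-m}\phi$ is supported in $\{k_1 e^{-m} < d_y < k_2 e^{-m}\}$. In the rescaled form $\zeta_{-n}(e^n\cdot)\Delta^{\alpha/2}\phi(e^n\cdot)$ that appears in \eqref{def sobolev}, the change of variables $y=e^n\tilde y$ extracts a factor $e^{-n\alpha}$ from $|e^n\tilde x-e^n\tilde y|^{-d-\alpha}$, and the resulting rescaled kernel $|\tilde x-\tilde y|^{-d-\alpha}$ is uniformly $C^\infty$ in $\tilde x$ on the unit-scale piece $\{d_{\tilde x}\approx 1\}$ because $\mathrm{supp}\,\zeta_{-m}(e^n\cdot)$ is uniformly separated from it. Differentiating the kernel an arbitrary number of times, every derivative of the localized contribution is bounded pointwise by $C e^{-n\alpha}(1+e^{n-m})^{-d-\alpha}\,\|\zeta_{-m}(e^m\cdot)\phi(e^m\cdot)\|_{L_1}$.

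\textbf{Step 3 (Assembly via Schur's test).} Plugging into \eqref{def sobolev}, the LHS of \eqref{ineq 1030} to the $p$-th power becomes
\[
\sum_n e^{n(\theta+\alpha p/2)}(1+e^n)^{\sigma}\bigg(\sum_m \frac{e^{-n\alpha}}{(1+e^{n-m})^{d+\alpha}}\, a_m\bigg)^{p},
\qquad a_m := \|\zeta_{-m}(e^m\cdot)\phi(e^m\cdot)\|_{H^\gamma_p}.
\]
Schur's test bounds this by $C\sum_m e^{m(\theta-\alpha p/2)}a_m^p \approx \|\psi^{-\alpha/2}\phi\|_{H^\gamma_{p,\theta}(D)}^p$. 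The stated conditions are used here in a sharp way: $d-1-\alpha p/2<\theta<d-1+p+\alpha p/2$ controls the kernel sum in the near-boundary regime ($n\to+\infty$, $m$ comparable), while $\sigma<-\theta+dp+\alpha p/2$ controls the far-field regime ($n\to-\infty$, where $(1+e^n)^\sigma\approx e^{n\sigma}$ competes with $e^{n(\theta+\alpha p/2-\alpha p)}\cdot e^{-npd}$ coming from the kernel).

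\textbf{Parts (ii) and (iii).} Part (ii) is the mirror image with $D\leftrightarrow \overline{D}^c$; the bound $|x-y|\geq d_x+d_y$ is symmetric, and the weight condition $\sigma>-\theta-\alpha p/2$ now plays the role of the far-field condition (controlling $\phi$'s mass at infinity in $\overline{D}^c$). Part (iii) is the half-space case: both $D$ and $\overline{D}^c$ are dilation-invariant, which forces $\sigma=0$ but also simplifies all estimates since $(1+e^n)^\sigma\equiv 1$ and only the near-boundary Schur bound has to be checked.

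\textbf{Main obstacle.} The principal technical difficulty lies in Step 3, verifying that the Schur kernel $K(n,m)=e^{-n\alpha}(1+e^{n-m})^{-d-\alpha}$, once re-weighted by $e^{n(\theta+\alpha p/2)}(1+e^n)^\sigma$ on the output and $e^{-m(\theta-\alpha p/2)}$ on the input, is summable uniformly in both directions. This is precisely where the stated ranges of $\theta$ and $\sigma$ become sharp, and an overly broad choice would break the argument. A secondary obstacle is Step 1: the reduction from arbitrary real orders $\gamma,\lambda$ to non-negative integer orders must preserve the exact weighted structure, which is made possible by the factorization $u=\Lambda_0 u+\sum_i D_i\Lambda_i u$ of Lemma \ref{negative rep} and by the fact that derivatives landing on the kernel produce only allowable powers of $e^n$ after rescaling.
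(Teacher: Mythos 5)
Your Step 1 — the explicit kernel formula for $\Delta^{\alpha/2}\phi$ on $\overline{D}^c$, the reduction of negative $\gamma$ via the factorization of Lemma \ref{negative rep} with the derivatives integrated by parts onto the kernel, and the reduction of $\lambda$ to nonnegative integers — coincides with the paper's argument. The gap is in Steps 2--3, which is precisely where the hypotheses on $\theta$ and $\sigma$ must be used. The paper proves the core case $\gamma=0$, $\lambda=m\in\bN_+$ by H\"older's inequality with an auxiliary exponent $\upsilon$ chosen in the window $(-\alpha p/2)\vee(-\theta+d-\alpha p/2)<\upsilon p<1\wedge(-\theta+d+p-1)$, followed by Fubini and the kernel integrals of Lemma \ref{aux lem est}, with the far-field regime $d_x\geq1$ treated separately using $\sigma+\theta-dp-\alpha p/2<0$. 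You replace this with a dyadic Schur test but never verify the Schur summability; you explicitly defer it as ``the principal technical difficulty.'' Since that verification is the entire analytic content of the lemma (it is where $\theta\in(d-1-\alpha p/2,\,d-1+p+\alpha p/2)$ and $\sigma<-\theta+dp+\alpha p/2$ enter), the proof is not complete as written.

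There is also a normalization problem in the kernel bound you would feed into the Schur test. In Step 2 you bound the contribution of the piece $\zeta_{-m}\phi$ pointwise by a constant times $e^{-n\alpha}(1+e^{n-m})^{-d-\alpha}\|\zeta_{-m}(e^m\cdot)\phi(e^m\cdot)\|_{L_1}$, and in Step 3 you silently replace the $L_1$ norm by $a_m=\|\zeta_{-m}(e^m\cdot)\phi(e^m\cdot)\|_{H_p^\gamma}$. Passing from $L_1$ to $L_p$ costs a factor $|\mathrm{supp}\,\zeta_{-m}(e^m\cdot)|^{1/p'}$, and measuring the output in $L_p$ over the $n$-th shell introduces a further volume factor; for a bounded $D$ these rescaled shell measures are not uniformly comparable between the near-boundary and far-field regimes, so the omitted factors change the exponents in $K(n,m)$ and hence the admissible ranges of $\theta$ and $\sigma$. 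In addition, the weight $(1+e^n)^{\sigma}$ makes the re-weighted kernel non-translation-invariant in $(n,m)$, so a single Schur test sequence will not work uniformly — one must split into the two regimes exactly as the paper does with its $d_x\leq1$ versus $d_x\geq1$ decomposition. Either carry out the Schur test with the correct volume factors in both regimes, or follow the paper's route via H\"older, Fubini and Lemma \ref{aux lem est}.
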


\begin{proof}
$(i)$ It suffices to prove \eqref{ineq 1030} only for  $\lambda=m\in\bN_+$. 

We consider three cases according to the range of $\gamma$.

\textbf{1.} $\gamma\geq0$. It is enough to prove \eqref{ineq 1030} with $\gamma=0$. 

Let $\phi \in C_c^\infty(\cO)$. Then, by \eqref{def frac}, for $x\in \overline{\cO}^c$ and $1\leq k\leq m$,
\begin{align*}
\left|D^k_x\Delta^{\alpha/2}(e_{\cO}\phi)(x)\right|=\left|c_d D^k_x\left(\int_{\cO} \frac{\phi(y)}{|x-y|^{d+\alpha}}dy\right)\right| \leq N \int_{\cO} \frac{|\phi(y)|}{|x-y|^{d+\alpha+k}}dy.
\end{align*}
Since $d_x\leq |x-y|$ for $x\in \overline{\cO}^c$ and $y\in \cO$, by Lemma \ref{lem deriv},
\begin{align} \label{23.03.27-1}
\|\psi^{\alpha/2}r_{\overline{\cO}^c}\Delta^{\alpha/2}(e_{\cO}\phi)\|_{H_{p,\theta,\sigma}^m(\overline{\cO}^c)}^p &\leq N \sum_{k=0}^m \|\psi^{\alpha/2+k} D_x^k \left(r_{\overline{\cO}^c}\Delta^{\alpha/2}(e_{\cO}\phi)\right)\|_{L_{p,\theta,\sigma}(\overline{\cO}^c)}^p \nonumber
\\
&\leq N \| \psi^{\alpha/2} H \|_{L_{p,\theta,\sigma}(\overline{\cO}^c)}^p,
\end{align}
where
$$
H(x):=\int_{\cO} \frac{|\phi(y)|}{|x-y|^{d+\alpha}}dy.
$$
Thus, we only need to  prove
\begin{align} \label{ineq 1025}
\| \psi^{\alpha/2} H \|_{L_{p,\theta,\sigma}(\overline{\cO}^c)}^p \leq N \|\psi^{-\alpha/2}\phi\|_{H_{p,\theta}^\gamma(\cO)}.
\end{align}
Take a constant $\upsilon$ such that $(-\alpha p/2)\vee\left(-\theta+d-\alpha p/2\right)<\upsilon p <1\wedge\left(-\theta+d+p-1\right)$.
By H\"older's inequality, for $x\in \overline{\cO}^c$,
\begin{align} \label{ineq 1016-1}
H(x) &\leq N \left(\int_{\cO} \frac{|\phi(y)|^p d_y^{\theta-d+\upsilon p}}{|x-y|^{d+\alpha p/2}}dy\right)^{1/p} \left(\int_{\cO} \frac{d_y^{\theta'-d-\upsilon p'}}{|x-y|^{d+\alpha p'/2}}dy \right)^{1/p'} \nonumber
\\
&=:NI(x)II(x),
\end{align}
where $1/p+1/p'=1, \theta/p+\theta'/p' = d$, and $\sigma/p+\sigma'/p'=0$.
Due to \eqref{aux ineq 2} with $\nu_0=\theta'-d-\upsilon p'$ and $\nu_1=\alpha p'/2$,
$$
II(x)^{p'} \leq N d_x^{\theta'-d-\upsilon p'-\alpha p'/2}(1+d_x)^{-\theta'+\upsilon p'}.
$$
Thus, by \eqref{ineq 1016-1} and Fubini's theorem, 
\begin{align*}
\| \psi^{\alpha/2} H \|_{L_{p,\theta,\sigma}(\overline{\cO}^c)}^p &\leq N \int_{\cO} \int_{\overline{\cO}^c} |\phi(y)|^p d_y^{\theta-d+\upsilon p} \frac{d_x^{-\upsilon p}(1+d_x)^{\sigma+\theta-dp+\upsilon p}}{|x-y|^{d+\alpha p/2}}  dxdy.
\end{align*}
Therefore, to prove \eqref{ineq 1025}, it suffices to show
\begin{align} \label{ineq 221017-1}
\int_{\overline{\cO}^c} \frac{d_x^{-\upsilon p}(1+d_x)^{\sigma+\theta-dp+\upsilon p}}{|x-y|^{d+\alpha p/2}} dx \leq N d_y^{-\upsilon p-\alpha p/2}.
\end{align}
If $d_x\leq 1$, then $1+d_x\approx 1$. Hence, by \eqref{aux ineq 1} with $\nu_0=-\upsilon p$ and $\nu_1=\alpha p/2$,
\begin{align*} 
\int_{\{x\in \overline{\cO}^c : d_x \leq1\}} \frac{d_x^{-\upsilon p}(1+d_x)^{\sigma+\theta-dp+\upsilon p}}{|x-y|^{d+\alpha p/2}} dx &\leq N \int_{\overline{\cO}^c} \frac{d_x^{-\upsilon p}}{|x-y|^{d+\alpha p/2}} dx 
\\
&\leq N d_y^{-\upsilon p-\alpha p/2}.
\end{align*}
Since $1+d_x\approx d_x\approx |x-y|$ for $y\in \cO$ and $x\in \overline{\cO}^c$ satisfying $d_x\geq 1$,
\begin{align*} 
&\int_{\{x\in \overline{\cO}^c:d_x\geq 1\}} \frac{d_x^{-\upsilon p}(1+d_x)^{\sigma+\theta-dp+\upsilon p}}{|x-y|^{d+\alpha p/2}}  dx 
\\
&\leq N \int_{|x-y|\geq 1} |x-y|^{\sigma+\theta-dp-d-\alpha p/2} dx \leq N \leq N d_y^{-\upsilon p-\alpha p/2}.
\end{align*}
Here, the last inequality follows from $-\upsilon p-\alpha p/2<0$ and the fact that $\cO$ is bounded. Thus, we have \eqref{ineq 221017-1}, and thus $(i)$ is proved if $\gamma \geq 0$.

\textbf{2.} $-1\leq\gamma<0$. In the case, it suffices to assume $\gamma=-1$.
By Lemma \ref{negative rep}, $\phi$ can be expressed as $\phi=\Lambda_0 \phi + \sum_{i=1}^d D_i \Lambda_i \phi$,
and we have
\begin{align*} 
\|\phi\|_{H_{p,\theta}^{\gamma}(\cO)} \approx \left( \|\Lambda_0 \phi\|_{H_{p,\theta}^{\gamma+1}(\cO)} + \sum_{i=1}^d \|\Lambda_i \phi\|_{H_{p,\theta-p}^{\gamma+1}(\cO)} \right).
\end{align*}
Here, due to \eqref{ineq 1105-1} and \eqref{ineq 1105-2}, we have $\Lambda_i \phi\in C_c^\infty(\cO)$. Thus, by \eqref{def frac}, for $x\in \overline{\cO}^c$  and $1\leq k\leq m$,
\begin{align*}
\left|D^k_x \left(\Delta^{\alpha/2}(e_{\cO}\Lambda_0\phi)\right)(x)\right|=\left|c_d D^k_x\left(\int_{\cO} \frac{\Lambda_0\phi(y)}{|x-y|^{d+\alpha}}dy\right)\right| \leq N \int_{\cO} \frac{|\Lambda_0\phi(y)|}{|x-y|^{d+\alpha+k}}dy,
\end{align*}
and for $i=1,2,\cdots, d$,
\begin{align*}
&\left|D^k_x\left(\Delta^{\alpha/2}(e_{\cO}D_i\Lambda_i\phi)\right)(x)\right|=\left|c_d D^k_x\left(\int_{\cO} \frac{D_i\Lambda_i\phi(y)}{|x-y|^{d+\alpha}}dy\right)\right| 
\\
&\leq N \int_{\cO} \frac{|\Lambda_i\phi(y)|}{|x-y|^{d+\alpha+k+1}}dy
\leq N \int_{\cO} \frac{|(\psi(y))^{-1}\Lambda_i\phi(y)|}{|x-y|^{d+\alpha+k}}dy.
\end{align*}
For the first inequality above we used the integration by parts, and for the second  inequality  we used $\psi(y)\approx d_y\leq|x-y|$.
  Thus, as in \eqref{23.03.27-1}, we have
  \begin{align*}
    \|\psi^{\alpha/2} r_{\overline{\cO}^c}\Delta^{\alpha/2}(e_{\cO}\phi)\|_{H_{p,\theta,\sigma}^m(\overline{\cO}^c)}^p &\leq N \sum_{k=0}^m \|\psi^{\alpha/2+k} D_x^k \left(r_{\overline{\cO}^c}\Delta^{\alpha/2}(e_{\cO}\phi)\right)\|_{L_{p,\theta,\sigma}(\overline{\cO}^c)}^p 
    \\
    &\leq N \| \psi^{\alpha/2} \widetilde{H} \|_{L_{p,\theta,\sigma}(\overline{\cO}^c)}^p,
  \end{align*}
  where
  \begin{equation*}
    \widetilde{H}(x):=\int_{\cO} \frac{|\Lambda_0\phi(y)|}{|x-y|^{d+\alpha}}dy + \sum_{i=1}^d\int_{\cO} \frac{|(\psi(y))^{-1}\Lambda_i\phi(y)|}{|x-y|^{d+\alpha}}dy.
  \end{equation*}
 Since $\gamma+1=0$, one can apply \eqref{ineq 1025} to both $\Lambda_0\phi$ and $\psi^{-1}\Lambda_i\phi$ (in place of $\phi$), and get
\begin{align*}
\|\psi^{\alpha/2}\Delta^{\alpha/2}\phi\|_{H_{p,\theta,\sigma}^\lambda(\overline{\cO}^c)} &\leq N \left( \|\Lambda_0 \phi\|_{H_{p,\theta}^{\gamma+1}(\cO)} + \sum_{i=1}^d \|\psi^{-1}\Lambda_i \phi\|_{H_{p,\theta}^{\gamma+1}(\cO)} \right) 
\\
&\approx N \|\phi\|_{H_{p,\theta}^{\gamma}(\cO)}.
\end{align*}
Therefore, we have \eqref{ineq 1030} for $-1\leq\gamma<0$.

\textbf{3.} $\gamma<-1$.  First assume  $-2\leq \gamma<-1$.  Then, by applying Lemma \ref{negative rep} repeatedly, one can further express $\phi$ in the form of $\phi=\Lambda_{0}^1 \phi + \sum_{i=1}^d D_i \Lambda_{i}^1 \phi + \sum_{i,j=1}^d D_{i}D_j \Lambda_{ij}^1 \phi$,
together with the relation
\begin{align*} 
\|\phi\|_{H_{p,\theta}^{\gamma}(\cO)} \approx \left( \|\Lambda_{0}^1 \phi\|_{H_{p,\theta,\sigma}^{\gamma+2}(\cO)} + \sum_{i=1}^d \|\Lambda_{i}^1 \phi\|_{H_{p,\theta-p,\sigma}^{\gamma+2}(\cO)} + \sum_{i,j=1}^d \|\Lambda_{ij}^1 \phi\|_{H_{p,\theta-2p,\sigma}^{\gamma+2}(\cO)} \right).
\end{align*}
By repeating the argument used in \textbf{2},  one can easily handle the case $-2\leq \gamma<-1$ and   continue in the same way to complete the proof for the case $\gamma<-1$.

$(ii)$ and $(iii)$ The similar argument used in the proof of $(i)$ yields the desired result. 
The lemma is proved.
\end{proof}

{\textbf{Proof of Lemma \ref{thm frac def}}

\begin{proof}

$(i)$ Let $\phi \in C_c^\infty(\bR^d)$ and take a compact set $K$ satisfying $supp\phi\subset K$. 
First, note that
\begin{equation} \label{21.09.20.1850}
  \int_{A} d_x^{\nu} <\infty.
\end{equation}
for any $\nu>-1$ and bounded set $A$ (see e.g. \cite[Lemma A.4]{Dirichlet}). Thus, by Lemma \ref{lem deriv}, for any $m\in \bN_+$,
\begin{align}
&\|\psi^{\alpha/2} r_{\cO}\phi\|_{H_{p',\theta'}^m (\cO)}^{p'} + \|\psi^{\alpha/2} r_{\overline{\cO}^c}\phi\|_{H_{p',\theta',\sigma}^m (\overline{\cO}^c)}^{p'} \nonumber
\\
&\leq N \sum_{i=0}^m \left(\|\psi^{\alpha/2+i}D_x^i(r_{\cO}\phi)\|_{L_{p',\theta'} (\cO)}^{p'} + \|\psi^{\alpha/2+i}D_x^i(r_{\overline{\cO}^c}\phi)\|_{L_{p',\theta',\sigma} (\overline{\cO}^c)}^{p'}\right) \nonumber
\\
&\leq N \sum_{i=0}^m \|D_x^i\phi\|_{L_\infty(\bR^d)}^{p'} \int_K d_x^{\alpha p'/2 +ip' +\theta'-d} dx  \leq N(K) \sum_{i=0}^m \|D_x^i\phi\|_{L_\infty(\bR^d)}^{p'}, \label{eqn 3.18}
\end{align}
where $1/p+1/p'=1, \theta/p+\theta'/p' = d$, and $\sigma/p+\sigma'/p'=0$.
Thus, we have $r_{\cO}\phi \in (\psi^{\alpha/2}H_{p,\theta}^{\gamma+\alpha}(\cO))^*$ and $r_{\overline{\cO}^c}\phi\in(\psi^{\alpha/2}H_{p,\theta,\sigma}^\lambda(\overline{\cO}^c))^*$.
Furthermore, thanks to \cite[Theorem 6.5(f)]{Rudin} and \eqref{eqn 3.18}, we also have $\phi\in \cD'(\bR^d)$. Therefore $(i)$ is proved.

$(ii)$ This is a consequence of \eqref{ineq 1030} and \cite[Lemma 4.4]{Dirichlet}.

$(iii)$ Let  $\phi \in C_c^\infty(\cO)$. Take two sequences $v_n\in C_c^\infty(\cO)$ and $w_n\in C_c^\infty(\overline{\cO}^c)$ such that 
\begin{eqnarray} 
&&v_n \to r_{\cO}u \text{ in } \psi^{\alpha/2}H_{p,\theta}^{\gamma+\alpha}(\cO), \label{eq91310150}
\\
&&w_n \to r_{\overline{\cO}^c}u \text{ in } \psi^{\alpha/2}H_{p,\theta,\sigma}^\lambda(\overline{\cO}^c). \label{eq91310150-1}
\end{eqnarray}
Then, by \cite[Corollary 4.5$(i)$]{Dirichlet},
\begin{align*} 
|(r_{\cO}\Delta^{\alpha/2} (e_{\cO}v_n),\phi)_{\cO}| &= |(v_n,r_{\cO}\Delta^{\alpha/2}(e_{\cO}\phi))_{\cO}| 
\\
&\leq N \|\psi^{-\alpha/2}v_n\|_{H_{p,\theta}^{\gamma+\alpha}(\cO)}\|\psi^{\alpha/2}r_{\cO}\phi\|_{H_{p',\theta'}^{-\gamma}(\cO)}.
\end{align*}
By \eqref{ineq 1030} with $(-\gamma,-\lambda,p',\theta',\sigma')$ instead of $(\gamma,\lambda,p,\theta,\sigma)$,
\begin{align*} 
&|(r_{\overline{\cO}^c}\Delta^{\alpha/2} (e_{\overline{\cO}^c}w_n),\phi)_{\cO}| 
\\
&= |(w_n,r_{\overline{\cO}^c}\Delta^{\alpha/2}(e_{\cO}\phi)))_{\overline{\cO}^c}| 
\\
&\leq N \| \psi^{-\alpha/2}w_n\|_{H_{p,\theta,\sigma}^\lambda(\overline{\cO}^c)} \| \psi^{\alpha/2}r_{\overline{\cO}^c}\Delta^{\alpha/2}(e_{\cO}\phi)\|_{H_{p',\theta',\sigma'}^{-\lambda}(\overline{\cO}^c)}  
\\
&\leq N \| \psi^{-\alpha/2}w_n\|_{H_{p,\theta,\sigma}^\lambda(\overline{\cO}^c)} \| \psi^{-\alpha/2}\phi\|_{H_{p',\theta'}^{-\gamma}(\cO)}.
\end{align*}
Thus, if we denote $u_n:=e_{\cO}v_n+e_{\overline{\cO}^c}w_n$, then both $(r_{\cO}u_n,r_{\cO}\Delta^{\alpha/2} (e_{\cO}\phi))_{\cO}$and $(r_{\overline{\cO}^c}u_n,r_{\overline{\cO}^c}\Delta^{\alpha/2}(e_{\cO}\phi))_{\overline{\cO}^c}$ are well defined and 
\begin{align*}
&|(\Delta^{\alpha/2}u_n,\phi)_{\cO}| 
\\
&\leq N \left( \|\psi^{-\alpha/2} r_{\cO}u_n\|_{H_{p,\theta}^{\gamma+\alpha}(\cO)} + \|\psi^{-\alpha/2} r_{\overline{\cO}^c}u_n\|_{H_{p,\theta,\sigma}^{\lambda}(\overline{\cO}^c)} \right) \| \psi^{-\alpha/2}\phi\|_{H_{p',\theta'}^{-\gamma}(\cO)}.
\end{align*}
Therefore, by the duality (Lemma \ref{lem space}$(iv)$), we get \eqref{ineq 1025-1} with $u_n$ in place of $u$. Letting $n\to\infty$, we get the desired result.
The lemma is proved.  
  \end{proof}

  In the following lemma, we take a collection of functions $\{\zeta_n\in C^{\infty}_c(\cO) : n\in \bZ\}$ satisfying \eqref{zeta prop 1}-\eqref{zeta prop 3} with  $(k_1,k_2)=(1,e^2)$.

\begin{lem} \label{lem perturb}
Let $d-1-\alpha p/2<\theta<d-1+p+\alpha p/2$, $\gamma\geq-\alpha/2$ and $\lambda\in\bR$. Suppose $\sigma>-\theta-\alpha p/2$ if $\cO$ is bounded, and $\sigma=0$ is if $\cO$ is a half space. Then, for any $u\in e_{\cO}H^{\gamma+\alpha}_{p,\theta-\alpha p/2}(\cO) \oplus e_{\overline{\cO}^c}H^{\lambda}_{p,\theta-\alpha p/2,\sigma}(\overline{\cO}^c)$, 
\begin{align} \label{ineq. 08.01-4}
&\sum_{n\in\bZ} e^{n(\theta-\alpha p/2)} \left\|\Delta^{\alpha/2}\Big(u(e^n\cdot)\zeta_{-n}(e^n\cdot)\Big)-\zeta_{-n}(e^n\cdot)\Delta^{\alpha/2}(u(e^n\cdot)) \right\|_{H_p^{\gamma}}^p \nonumber
\\
&\leq N \|\psi^{-\alpha /2}r_{\cO}u\|_{H_{p,\theta}^{0\vee(\gamma+\alpha/2)}(\cO)}^p + N \|\psi^{-\alpha/2} r_{\overline{\cO}^c}u\|_{H_{p,\theta,\sigma}^\lambda(\overline{\cO}^c)}^p,
\end{align}
where $N=N(d,p,\gamma,\alpha,\theta,\cO,\sigma)$. 
\end{lem}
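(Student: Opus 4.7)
The plan is to set $\eta_n(x):=\zeta_{-n}(e^n x)$ and $v_n(x):=u(e^n x)$, and let $D_n:=e^{-n}D$. The expression on the left-hand side of \eqref{ineq. 08.01-4} is then a weighted sum of $H^\gamma_p$-norms of the commutator $[\Delta^{\alpha/2},\eta_n]v_n$. Starting from the pointwise identity
\[
[\Delta^{\alpha/2},\eta_n]v_n(x) = v_n(x)\,\Delta^{\alpha/2}\eta_n(x) + c_d\int_{\bR^d}\frac{(\eta_n(x{+}y)-\eta_n(x))(v_n(x{+}y)-v_n(x))}{|y|^{d+\alpha}}\,dy,
\]
I would split both the product term and the integrand according to whether the argument of $v_n$ belongs to $D_n$ or $\overline{D_n}^c$. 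Combining the rescaling $\zeta_{-n}(e^n\cdot)$ with \eqref{zeta prop 1}--\eqref{zeta prop 2} shows that $\eta_n$ has uniformly-in-$n$ bounded derivatives of all orders, is supported in a region of uniform geometric size (distance $\asymp 1$ from $\partial D_n$), and consequently $\Delta^{\alpha/2}\eta_n$ is uniformly bounded with rapid decay in an $n$-independent fashion.

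For the interior part (the contributions of $v_n\mathbf{1}_{D_n}$) I would reuse the commutator argument already developed for the zero-exterior theory in \cite{Dirichlet}: since $\eta_n$ is a smooth multiplier (uniformly in $n$) and $\Delta^{\alpha/2}$ is a pseudodifferential operator of order $\alpha$, the resulting $H^\gamma_p$ norm is controlled by $\|v_n\|_{H^{(\gamma+\alpha/2)\vee 0}_p(\mathrm{loc})}$, uniformly in $n$; the loss of $\alpha/2$ derivatives is forced by the product summand $v_n\Delta^{\alpha/2}\eta_n$, which cannot be absorbed by a purely pseudo-differential gain. Summing over $n$ against the weight $e^{n(\theta-\alpha p/2)}$ and invoking \eqref{def sobolev} together with the weight-shift equivalence in Lemma \ref{lem space}$(iii)$ reassembles the norm $\|\psi^{-\alpha/2}u\|^p_{H^{(\gamma+\alpha/2)\vee 0}_{p,\theta}(D)}$, which is the first term on the right-hand side of \eqref{ineq. 08.01-4}.

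For the exterior part, the key observation is that the support of $\eta_n$ forces $\mathrm{dist}(x,\partial D_n)\asymp 1$, so whenever $x+y\in\overline{D_n}^c$ one has $|y|\gtrsim 1$ uniformly in $n$; on this region the kernel $(\eta_n(x{+}y)-\eta_n(x))|y|^{-d-\alpha}$ is effectively smooth and rapidly decaying (and similarly for $\Delta^{\alpha/2}\eta_n$ in the product summand). To accommodate arbitrarily negative $\lambda$, I would apply Lemma \ref{negative rep} iteratively to write $u|_{\overline{D}^c}=\Lambda_0u+\sum_iD_i\Lambda_iu$ (and repeat) until all remaining factors live in a nonnegative-regularity space, then integrate by parts so that every derivative falls on the smooth bounded kernel rather than on $v_n$. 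The resulting integrals are controlled by exactly the same weighted singular-integral bounds that drive the proof of Lemma \ref{lem out}, and summation in $n$ with weight $e^{n(\theta-\alpha p/2)}$ combined with \eqref{def sobolev} yields the second term on the right-hand side of \eqref{ineq. 08.01-4}. The hypothesis $\sigma>-\theta-\alpha p/2$ (or $\sigma=0$ in the half-space case) is precisely what makes these weighted singular integrals convergent. The main obstacle will be the bookkeeping in this exterior step for large negative $\lambda$: one must ensure that each integration by parts arising from the iterated $\Lambda$-decomposition preserves the uniform-in-$n$ control on the kernel, which is tight precisely because of the annular separation $|y|\gtrsim 1$ provided by the geometry of the supports of $\eta_n$.
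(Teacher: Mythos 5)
Your plan is essentially sound and its skeleton matches the paper's: split $u$ into its interior and exterior parts, quote the zero-exterior commutator estimate from \cite{Dirichlet} for the interior part, and control the exterior contribution by the mapping property of $\Delta^{\alpha/2}$ from $\overline{D}^c$ into $D$. Where you diverge is in the execution of the exterior step. The paper's proof is shorter because it exploits the fact that $\zeta_{-n}$ is supported in $D$: after approximating the exterior part of $u$ by $w_m\in C_c^\infty(\overline{D}^c)$, one has $w_m\zeta_{-n}\equiv 0$, so the whole commutator collapses to the single term $-\zeta_{-n}(e^n\cdot)\Delta^{\alpha/2}(w_m(e^n\cdot))$, and the weighted sum over $n$ is, after rescaling, exactly $\|\psi^{\alpha/2}\Delta^{\alpha/2}w_m\|^p_{H^\gamma_{p,\theta}(D)}$, which is the statement of Lemma \ref{lem out}$(ii)$ — no fresh kernel analysis is needed. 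Your route instead keeps the full bilinear form, uses the separation $|y|\gtrsim 1$ forced by the supports, and re-runs the $\Lambda$-decomposition and integration-by-parts machinery; this is correct in substance but duplicates the proof of Lemma \ref{lem out} rather than invoking it, and the "bookkeeping obstacle" you flag disappears entirely in the paper's formulation. Two small points you should make explicit: (a) the pointwise identity for the commutator and the splitting of the integrand require a density/approximation step (the paper takes $v_m\to u$ in $\psi^{\alpha/2}H^{\gamma+\alpha}_{p,\theta}(D)$ and $w_m\to u$ in $\psi^{\alpha/2}H^\lambda_{p,\theta,\sigma}(\overline{D}^c)$ and passes to the limit at the end), since $u$ is only a distribution; and (b) for the interior part it is cleaner to cite \cite[Corollary 4.5$(ii)$]{Dirichlet} directly rather than re-deriving the multiplier estimate, which is what the paper does.
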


\begin{proof}
We first note that  the left-hand side of \eqref{ineq. 08.01-4} makes sense due to Lemma \ref{thm frac def}. Take two sequences $v_m\in C_c^\infty(\cO)$ and $w_m\in C_c^\infty(\overline{\cO}^c)$ as in \eqref{eq91310150} and \eqref{eq91310150-1}. 
By \cite[Corollary 4.5$(ii)$]{Dirichlet}, we have \eqref{ineq. 08.01-4} with $v_m$ instead of $u$. Since $e_{\overline{\cO}^c}w_m\zeta_{-n}=0$ for all $n$ (note $\zeta_{-n}=0$ on $\overline{\cO}^c$),
\begin{eqnarray*}
&&\sum_{n\in\bZ} e^{n(\theta-\alpha p/2)} \left\|\Delta^{\alpha/2}\Big(e_{\overline{\cO}^c}w_m(e^n\cdot)\zeta_{-n}(e^n\cdot)\Big)-\zeta_{-n}(e^n\cdot)\Delta^{\alpha/2}(e_{\overline{\cO}^c}w_m(e^n\cdot)) \right\|_{H_p^{\gamma}}^p 
\\
&&= \sum_{n\in\bZ} e^{n(\theta-\alpha p/2)} \left\|\zeta_{-n}(e^n\cdot)\Delta^{\alpha/2}(e_{\overline{\cO}^c}w_m(e^n\cdot)) \right\|_{H_p^{\gamma}}^p 
\\
&&= \sum_{n\in\bZ} e^{n(\theta+\alpha p/2)} \left\|\zeta_{-n}(e^n\cdot)(\Delta^{\alpha/2}(e_{\overline{\cO}^c}w_m))(e^n\cdot) \right\|_{H_p^{\gamma}}^p 
\\
&&\leq N \| \psi^{\alpha/2} r_{\cO}\Delta^{\alpha/2}(e_{\overline{\cO}^c}w_m) \|_{H_{p,\theta}^{\gamma}(\cO)}.
\end{eqnarray*}
Thus, Lemma \ref{lem out} easily yields \eqref{ineq. 08.01-4} with $w_n$. Therefore \eqref{ineq. 08.01-4} also holds with $u_n:=v_n+w_n$. Letting $n\to\infty$, we get the desired result. The lemma is proved.
\end{proof}

{\textbf{Proof of Lemma \ref{lem 3.29}}}.

\begin{proof}

$(i)$ For $k\in \bN_+$ and $\phi\in C^{\infty}_c(\cO)$, 
$$
\|\phi\|_{\tilde{H}^k_p(\cO)} \approx \sum_{|\beta|\leq k} \|D^{\beta}_x\phi\|_{L_p(\cO)}\leq N \|\phi\|_{H^k_{p,\theta-kp}(\cO)}.
$$
Since $C^{\infty}_c(\cO)$ is dense in $H^k_{p,\theta-kp}(\cO)$, this shows that  $H^k_{p,\theta-kp}(\cO) \subset \tilde{H}^k_p(\cO)$. Thus, by the complex interpolation theorems (see \cite[Theorem 3.5]{T02}) we get
\begin{equation}
\label{eqn 4.10.1}
H^{\kappa}_{p,\theta-kp}(\cO) \subset \tilde{H}^{\kappa}_p(\cO), \quad \kappa \geq 0.
\end{equation}
If $\kappa<2/p$, then by Theorem \ref{main thm ellip} with $\gamma=0$,
  $\alpha=\kappa$ and $\theta=d-\kappa p/2$, for $u\in C_c^\infty(\cO)$,
  \begin{align*}
   \|u\|_{H^{\kappa}_{p,d-\kappa p}(\cO)}& \approx \|\psi^{-\kappa/2} u\|_{H_{p,d-\kappa p/2}^\kappa(\cO)} \leq N \| \psi^{\kappa/2} \Delta^{\kappa/2}u\|_{L_{p,d-\kappa p/2}(\cO)}
 \\& \approx N \|  \Delta^{\kappa/2}u\|_{L_{p}(\cO)} \leq N \|u\|_{\mathring{H}_p^\kappa(\cO)},
  \end{align*}
  where $\mathring{H}_p^\kappa(\cO)$ denotes the closure of $C_c^\infty(\cO)$ in $H_p^\kappa(\bR^d)$. 
  Consequently, the embedding $\mathring{H}_p^\kappa(\cO)\subset H^{\kappa}_{p,d-\kappa p}(\cO)$ is continuous.
This and \eqref{eqn 4.10.1} together with the relations (see \cite{T02}) 
$$
    (\widetilde{H}_p^\kappa(\cO))^*=H_{p'}^{-\kappa}(\cO), \quad    ({H}_p^\kappa(\cO))^*=\widetilde{H}_{p'}^{-\kappa}(\cO), \quad \kappa>0.
$$
and
$$
   \mathring{H}_p^\kappa(\cO)=\widetilde{H}_p^\kappa(\cO)=H_p^\kappa(\cO), \quad 1/p-1<\kappa<1/p
    $$
    prove $(i)$. 
    
    $(ii)$ Let $G\in H^{\beta}_p(\bR^d)$ such that $r_{\overline{\cO}^c}G=g$ and $supp \,G \subset \cO^c$. Note, choosing sufficiently small $\varepsilon>0$ and appropriate $x^0_i \in \partial \cO$, we get
    $$
    \|\psi^{-\beta}(1+\psi)^{-\beta}g\|_{L_p(\overline{\cO}^c)}\leq N \sum_{i} \int_{\{x\in \overline{\cO}^c\cap B_{\varepsilon}(x^0_i) \}} |d_x^{-\beta}G|^p dx + N \|G\|_{L_p(\overline{\cO}^c)}.
    $$
    Now we use (5.66) in \cite{Tr} to conclude that  if $\varepsilon$ is sufficiently small then
    $$
    \int_{\{x\in \overline{\cO}^c\cap B_{\varepsilon}(x^0_i) \}} |d_x^{-\beta}G|^p dx\leq N \|G\|_{H^{\beta}_p(\bR^d)}.
        $$
        Taking the infimum over $G$,  we get $(ii)$.

  $(iii)$ Since $\cO$ is bounded,
 \begin{align*}
\|u\|_{L_p(\bR^d)} =\|r_{\cO}u\|_{L_p(\cO)}+ \|r_{\overline{\cO}^c}u\|_{L_p(\overline{\cO}^c)} \leq \|r_{\cO}u\|_{L_{p,d-\beta p}(\cO)} +  \|r_{\overline{\cO}^c}u\|_{L_p(\overline{\cO}^c)}.
 \end{align*}
 
 To finish the proof for the case $\beta\geq0$, we will show that
 \begin{align*}
|(\Delta^{\beta/2}u,\phi)_{\bR^d}|  \leq NN_0 \|\phi\|_{L_{p'}(\bR^d)}, \quad \forall \phi \in C_c^\infty(\bR^d),
 \end{align*}
 where 
\begin{align*}
N_0&:=\|\psi^{-\alpha/2}r_{\cO}u\|_{H_{p,\theta}^{\beta}(\cO)} +\|g\|_{\tilde{H}^{\beta}_p(\overline{\cO}^c)},
 \end{align*}
 $1/p+1/p'=1$ and $\theta=d-\beta p+\alpha p/2$.
Let $\phi \in C_c^\infty(\bR^d)$. Then, by Theorem \ref{thm frac def} with $\lambda=0$ and $\sigma=0$,
\begin{align*}
\|\psi^{\alpha/2}r_{\cO}\Delta^{\beta/2}\phi\|_{H_{p',\theta'}^{-\beta}(\cO)} &= N \|\psi^{\beta/2}r_{\cO}\Delta^{\beta/2}\phi\|_{H_{p',d+\beta p/2}^{-\beta}(\cO)}
\\
&\leq N \|r_{\cO}\phi\|_{L_{p',d}(\cO)}+ N \|r_{\overline{\cO}^c}\phi\|_{L_{p',d}(\overline{\cO}^c)} = N\|\phi\|_{L_{p'}(\bR^d)}.
\end{align*}
Here, $\theta/p+\theta'/p'=d$. Let $G\in H^{\beta}_p(\bR^d)$ such that $r_{\overline{\cO}^c}G=g$ and $\text{supp}\, G \subset \cO^c$. Then, 
since $\Delta^{\beta/2}\phi\in H_{p'}^{-\beta}(\bR^d)$,
\begin{align*}
|(g,r_{\overline{\cO}^c}\Delta^{\beta/2}\phi)_{\overline{\cO}^c}| = |(G,\Delta^{\beta/2}\phi)_{\bR^d}| \leq N \|G\|_{H_p^\beta(\bR^d)} \|\Delta^{\beta/2} \phi\|_{H_{p'}^{-\beta}(\bR^d)}.
\end{align*}
Thus, 
\begin{align*} 
 |(u,\Delta^{\beta/2}\phi)_{\bR^d}|&\leq N \|\psi^{-\alpha/2}r_{\cO}u\|_{H_{p,\theta}^{\beta}(\cO)} \|\psi^{\alpha/2}r_{\cO}\Delta^{\beta/2}\phi\|_{H_{p',\theta'}^{-\beta}(\cO)} 
\\
&\quad +N \|G\|_{H_p^\beta(\bR^d)} \|\Delta^{\beta/2} \phi\|_{H_{p'}^{-\beta}(\bR^d)}.
\end{align*}
Taking the infimum over $G$, we get the desired result. The lemma is proved.
 \end{proof}

\section{Representation of solution and zero-th order estimate} \label{sec repre}

In this section, we introduce a probabilistic  representation of weak solution and estimate the zero-th order derivative  of solution. 

We consider the parabolic equation
\begin{equation} \label{parabolic sec 3}
\begin{cases}
\partial_t u(t,x)=\Delta^{\alpha/2}u(t,x),\quad &(t,x)\in(0,T)\times \cO,
\\
u(0,x)=0,\quad & x\in \cO,
\\
u(t,x)=g(t,x),\quad &(t,x)\in (0,T)\times \overline{\cO}^c,
\end{cases}
\end{equation}
as well as the elliptic equation
\begin{equation} \label{elliptic sec 3}
\begin{cases}
\Delta^{\alpha/2}u(x)=0,\quad &x\in \cO,\\
u(x)=g(x),\quad & x\in \overline{\cO}^c.
\end{cases}
\end{equation}

Let $X=(X)_{t\geq 0}$ be a rotationally symmetric $\alpha$-stable $d$-dimensional L\'evy process  defined on a probability space $(\Omega,\cF,\bP)$, that is, $X_t$ is a L\'evy process such that
$$
\bE e^{i \xi \cdot X_t}=e^{-|\xi|^{\alpha}t}, \quad \forall\, \xi\in \bR^d.
$$ 
Let $p(t,x)$ be the transition density of $X$.
For $x\in \bR^d$, let $\tau_{\cO}=\tau^x_{\cO}:=\inf\{t\geq0: x+X_t\not\in \cO\}$
denote the first exit time of $\cO$ by $X$. 
Let $p_{\cO} (t,x,y)$ denote the transition density of the process $X$ killed upon $\tau_{\cO}$, i.e., for functions $f\geq0$,
\begin{equation*}
  \int_{\bR^d} p_{\cO}(t,x,y) f(y) dy = \bE [f(x+X_t); \tau_{\cO}>t], \quad t>0, x\in \bR^d.
\end{equation*}
 We also define the Green function of $X$ in $\cO$ by
 $$
G_{\cO}(x,y)=\int_0^\infty p_{\cO} (t,x,y)dt.
 $$
Then, for functions $f\geq0$,
\begin{equation*}
  \int_{\bR^d} G_{\cO}(x,y) f(y) dy = \bE \int_0^{\tau_{\cO}} f(x+X_t) dt, \quad x\in \bR^d.
\end{equation*}
Due to the definition of the transition density, one can easily find $p_{\cO}(t,x,y)=0$ and $G_{\cO}(x,y)=0$ if $x\in \cO^c$ or $y\in \cO^c$.

Next, we define the Poisson kernel of $\cO$ for $\Delta^{\alpha/2}$ by
 \begin{align*}
K_{\cO}(x,z)=c_d \int_{\cO} G_{\cO}(x,y)|y-z|^{-d-\alpha} dy, \quad x\in \cO, z\in \cO^c,
\end{align*}
where $c_d=\frac{2^{\alpha}\Gamma(\frac{d+\alpha}{2})}{\pi^{d/2}|\Gamma(-\alpha/2)|}$. Then, by Ikeda-Watanabe formula (see \cite[Theorem 1]{IW}), for every $g\geq0$ on $\cO^c$,
$$
\bE[g(x+X_{\tau_{\cO}})]=\int_{\cO^c} K_{\cO}(x,z)g(z)dz, \quad x\in \cO.
$$
That is, $K_{\cO}(x,z)dz$ is the distribution of the random variable $x+X_{\tau_{\cO}}$. 
We also define the parabolic Poisson kernel of $\cO$ for $\Delta^{\alpha/2}$ by
 \begin{align*}
Q_{\cO}(t,x,z)=c_d \int_{\cO} p_{\cO}(t,x,y)|y-z|^{-d-\alpha} dy, \quad x\in \cO, z\in \cO^c, t>0.
\end{align*}

 Below we summarize some notations which will be used in this section.
\begin{itemize}
  \item $p(t,x)$: the transition density of $X$

  \item $p_{\cO}(t,x,y)$: the transition density of $X$ killed upon $\tau_{\cO}$

  \item $G_{\cO}(x,y)$: the Green function of $X$ in $\cO$

  \item $Q_{\cO}(t,x,y)$: the parabolic Poisson kernel of $\cO$ for $\Delta^{\alpha/2}$

  \item $K_{\cO}(x,y)$: the (elliptic) Poisson kernel of $\cO$ for $\Delta^{\alpha/2}$
\end{itemize}

In the following Lemmas \ref{pD est} and \ref{Qd est} and Corollary \ref{Kd est}, we introduce upper bounds of $p_{\cO}$, $K_{\cO}$ and $Q_{\cO}$.

\begin{lem}(\cite[Lemma 3.1]{Dirichlet}) \label{pD est}
For any $x,y\in \cO$,
\begin{align*} 
    p_{\cO} (t,x,y) 
 \leq
    \begin{cases}
     N\left(1\wedge \frac{d_x^{\alpha/2}}{\sqrt{t}}\right)\left(1\wedge \frac{d_y^{\alpha/2}}{\sqrt{t}}\right)p(t,x-y)\,&\text{if $\cO$ is a half space},
     \\
     Ne^{-c t}\left(1\wedge \frac{d_x^{\alpha/2}}{\sqrt{t}}\right)\left(1\wedge \frac{d_y^{\alpha/2}}{\sqrt{t}}\right)p(t,x-y)\, &\text{if $\cO$ is   bounded}.
    \end{cases}
\end{align*}
Here, $c,N>0$ depend only on $d,\alpha$ and $\cO$. 
\end{lem}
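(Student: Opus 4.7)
The estimate has a well-known product structure: a "survival" factor $(1\wedge d_x^{\alpha/2}/\sqrt t)$ on each side, the free heat-kernel decay $p(t,x-y)$, and, in the bounded case, an exponential time-decay $e^{-ct}$ coming from the spectral gap of the Dirichlet fractional Laplacian. My plan is to assemble these three ingredients separately in each of the two geometries, using the sharp Chen--Kim--Song two-sided heat-kernel estimates as the principal input and the classical bound
\[
p(t,x-y)\asymp t^{-d/\alpha}\wedge\frac{t}{|x-y|^{d+\alpha}}
\]
to translate their form into the form stated in the lemma.

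For the half-space $D=\bR^d_+$, the rotational symmetry under rotations about $\partial D$ and the scaling invariance $x\mapsto\lambda x$, $t\mapsto\lambda^\alpha t$ make the Chen--Kim--Song estimate
\[
p_{\bR^d_+}(t,x,y)\asymp\Bigl(1\wedge\tfrac{d_x^{\alpha/2}}{\sqrt t}\Bigr)\Bigl(1\wedge\tfrac{d_y^{\alpha/2}}{\sqrt t}\Bigr)\Bigl(t^{-d/\alpha}\wedge\tfrac{t}{|x-y|^{d+\alpha}}\Bigr)
\]
hold for all $t>0$ and all $x,y\in\bR^d_+$. Reading off the upper bound and substituting the classical estimate for $p(t,\cdot)$ immediately yields the half-space case.

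For bounded $C^{1,1}$ $D$, I would split the argument at a fixed threshold $T_0=T_0(D)>0$. For $t\leq T_0$, the small-time Chen--Kim--Song estimate gives exactly the same product form as above, so one obtains the desired bound with the trivial factor $e^{-ct}\asymp 1$. For $t\geq T_0$, the plan is to use the intrinsic ultracontractivity of the killed semigroup together with the ground-state comparison $\phi_1(x)\asymp d_x^{\alpha/2}$ (which holds on $C^{1,1}$ open sets) to deduce
\[
p_D(t,x,y)\leq Ce^{-\lambda_1(t-T_0)}\phi_1(x)\phi_1(y)\leq Ce^{-ct}d_x^{\alpha/2}d_y^{\alpha/2}.
\]
In this regime $d_x,d_y$ and $|x-y|$ are bounded, so $1\wedge d_x^{\alpha/2}/\sqrt t\asymp d_x^{\alpha/2}$ (similarly for $y$) and $p(t,x-y)\gtrsim t^{-d/\alpha}\gtrsim 1$, so the factor $p(t,x-y)$ is harmless and the exponential decay absorbs everything into the stated bound.

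The main technical obstacle is the matching at the crossover $t\asymp T_0$ in the bounded case: one has to be sure that the exponential-decay regime and the polynomial Chen--Kim--Song regime glue into a single clean bound of product form, rather than leaving a region where neither estimate is sharp. This is handled by choosing $T_0$ small enough that the small-time estimate still captures the boundary-decay factors and then using the semigroup property $p_D(t,x,y)=\int_D p_D(T_0/2,x,z)p_D(t-T_0/2,z,y)\,dz$, with the small-time estimate giving the $d_x^{\alpha/2}$, $d_y^{\alpha/2}$ factors and the ultracontractive bound supplying the time decay.
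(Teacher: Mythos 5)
Your proposal is correct and is essentially the argument behind this lemma: the paper does not prove it but simply quotes \cite[Lemma 3.1]{Dirichlet}, which is itself obtained from the Chen--Kim--Song two-sided Dirichlet heat kernel estimates (small-time/scaling form for the half space, small-time estimate plus intrinsic ultracontractivity with $\phi_1\approx d_x^{\alpha/2}$ for large times in the bounded case), exactly as you outline. The crossover issue you worry about is harmless, since for $t\geq T_0$ the survival factors and $p(t,x-y)$ are bounded below by negative powers of $t$ times $d_x^{\alpha/2}d_y^{\alpha/2}$, which the spectral-gap exponential absorbs after shrinking $c<\lambda_1$.
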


\begin{lem} \label{Qd est}
(i) For any $t>0, x\in \cO$ and $z\in \overline{\cO}^c$, there exists $N=N(d,\alpha,\cO)$ such that
\begin{align} \label{para poi}
Q_{\cO}(t,x,z) \leq N \left( t^{-d/\alpha-1} \wedge |x-z|^{-d-\alpha} \right) \left( 1\wedge \frac{d_x^{\alpha/2}}{\sqrt{t}} \right) \left( 1\wedge \frac{d_z^{\alpha/2}}{\sqrt{t}} \right)^{-1}.
\end{align}

(ii) In particular, if $\cO$ is bounded, then for $z\in \overline{\cO}^c$ such that $d_z>diam(\cO)$,
\begin{align} \label{para poi dom}
Q_{\cO}(t,x,z) \leq N |x-z|^{-d} \left( 1\wedge t^{-d/\alpha-1/2} \right) \left( 1\wedge \frac{d_x^{\alpha/2}}{\sqrt{t}} \right) d_z^{-\alpha},
\end{align}
where $N=N(d,\alpha,\cO)$.
\end{lem}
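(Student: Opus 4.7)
The plan is to reduce both parts to the heat-kernel estimate in Lemma \ref{pD est} combined with two elementary geometric facts: for $y\in D$ and $z\in \overline{D}^c$, the segment from $y$ to $z$ must cross $\partial D$, so $d_y\leq|y-z|$ and $d_z\leq|y-z|$. For part (i), I will prove the $|x-z|^{-d-\alpha}$ and $t^{-d/\alpha-1}$ bounds separately and then take the minimum. Starting from
$$Q_D(t,x,z) \leq C\bigl(1\wedge d_x^{\alpha/2}/\sqrt{t}\bigr) \int_D \bigl(1\wedge d_y^{\alpha/2}/\sqrt{t}\bigr) p(t,x-y) |y-z|^{-d-\alpha}\, dy,$$
the $|x-z|^{-d-\alpha}$ bound is obtained by splitting $D=(D\cap A_1)\cup(D\cap A_2)$, where $A_1=\{|y-z|\leq |x-z|/2\}$. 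On $A_2$ the factor $|y-z|^{-d-\alpha}\leq C|x-z|^{-d-\alpha}$ pulls out and the remaining integral is controlled by $\int_{\bR^d} p(t,x-y)\,dy=1$, giving $C|x-z|^{-d-\alpha}(1\wedge d_x^{\alpha/2}/\sqrt{t})$. On $A_1$ the triangle inequality yields $|x-y|\geq|x-z|/2$, so $p(t,x-y)\leq Ct|x-z|^{-d-\alpha}$; together with $d_y\leq|y-z|$ and $|y-z|\geq d_z$ (so that $A_1\cap D$ sits in the annulus $\{d_z\leq|y-z|\leq|x-z|/2\}$) this reduces the integral to a one-dimensional radial one.

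The $t^{-d/\alpha-1}$ bound is obtained analogously, using the uniform estimate $p(t,x-y)\leq Ct^{-d/\alpha}$ in place of the $A_1$-style bound on $p$, after which the spatial integral reduces to the same radial integral, now over $[d_z,\infty)$. Explicitly, I claim that
$$I(t,a,R):=\int_a^R \bigl(1\wedge r^{\alpha/2}/\sqrt{t}\bigr) r^{-1-\alpha}\, dr \leq C t^{-1}\bigl(1\wedge a^{\alpha/2}/\sqrt{t}\bigr)^{-1}$$
for any $0<a<R\leq\infty$, with $C$ independent of $a,R,t$. Taking $a=d_z$ and $R=|x-z|/2$ or $R=\infty$, then multiplying out with the prefactors, produces both bounds in (i) and hence (i) itself.

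For part (ii), the hypothesis $d_z>\mathrm{diam}(D)$ forces $|x-y|\leq \mathrm{diam}(D)<d_z\leq|y-z|$ for every $y\in D$, hence $|y-z|\in[|x-z|/2,\,2|x-z|]$. Combined with $|y-z|\geq d_z$, this yields $|y-z|^{-d-\alpha}\leq C|x-z|^{-d}d_z^{-\alpha}$. Now I apply the bounded-$D$ case of Lemma \ref{pD est}, which supplies an extra $e^{-ct}$ factor, together with $\int_{\bR^d} p(t,x-y)\,dy=1$, to obtain
$$Q_D(t,x,z)\leq Ce^{-ct}\bigl(1\wedge d_x^{\alpha/2}/\sqrt{t}\bigr)|x-z|^{-d}d_z^{-\alpha}.$$
Since $e^{-ct}$ decays faster than any polynomial, $e^{-ct}\leq C(1\wedge t^{-d/\alpha-1/2})$ (trivial for $t\leq 1$, from exponential-beats-polynomial for $t\geq 1$), completing (ii).

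The main obstacle is the careful evaluation of the radial integral $I(t,a,R)$: it splits into three cases according to whether $t^{1/\alpha}$ lies below $a$, between $a$ and $R$, or above $R$, and in each case one must verify the answer matches $Ct^{-1}(1\wedge a^{\alpha/2}/\sqrt{t})^{-1}$. The lower limit $a=d_z$ is essential here: it is exactly what produces the $(1\wedge d_z^{\alpha/2}/\sqrt{t})^{-1}$ factor on the right-hand side of (i). Apart from this bookkeeping, every step is routine.
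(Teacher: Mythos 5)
Your proof is correct. For part (i) it is essentially the paper's argument in a slightly different packaging: the paper splits into the cases $t\leq|x-z|^{\alpha}$ and $t>|x-z|^{\alpha}$ and in each case proves only the relevant half of the minimum, whereas you prove the $|x-z|^{-d-\alpha}$ bound and the $t^{-d/\alpha-1}$ bound separately for all $t$ and then intersect them; the spatial decomposition at $|x-z|/2$, the use of $d_y\leq|y-z|$, $d_z \le |y-z|$, and the radial integral $I(t,a,R)$ producing the factor $t^{-1}\bigl(1\wedge d_z^{\alpha/2}/\sqrt{t}\bigr)^{-1}$ are the same computations the paper carries out, just organized around a single reusable lemma. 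For part (ii) you take a genuinely different (though minor) route: the paper extracts the decay $1\wedge t^{-d/\alpha-1/2}$ from the product of $R_{t,y}\leq 1\wedge Ct^{-1/2}$ (valid since $d_y\leq \mathrm{diam}(D)$) and $\int_D p(t,x-y)\,dy\leq C\bigl(1\wedge t^{-d/\alpha}\bigr)$, while you discard $R_{t,y}$ entirely and obtain the time decay from the exponential factor $e^{-ct}$ in the bounded-domain bound of Lemma \ref{pD est} via $e^{-ct}\leq C\bigl(1\wedge t^{-d/\alpha-1/2}\bigr)$. Both mechanisms are valid here; the paper's version has the advantage of not relying on the exponential killing rate (so it would survive in settings where that factor is absent), whereas yours is shorter. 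Your geometric reduction $|y-z|\approx|x-z|\approx d_z$ under $d_z>\mathrm{diam}(D)$ is the same as the paper's.
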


\begin{proof}
$(i)$ By Lemma \ref{pD est},
\begin{align*}
Q_{\cO}(t,x,z) &\leq N R_{t,x} \int_{\cO} p(t,x-y) R_{t,y} |y-z|^{-d-\alpha} dy,
\end{align*}
where $R_{t,x}:=\frac{d_x^{\alpha/2}}{\sqrt{t}+d_x^{\alpha/2}}$.
Thus, we only need to show that 
\begin{align} \label{para poi aim}
\int_{\cO} p(t,x-y) R_{t,y} |y-z|^{-d-\alpha} dy &\leq N \left( t^{-d/\alpha-1} \wedge |x-z|^{-d-\alpha} \right) R_{t,z}^{-1}.
\end{align}

First, assume $t\leq |x-z|^\alpha$. We divide the above integral as follows;
\begin{align} \label{070422-1}
&\int_{\cO} p(t,x-y) R_{t,y} |y-z|^{-d-\alpha} dy \nonumber
\\
&= \int_{\{y\in \cO:|x-y|>|x-z|/2\}} \cdots dy + \int_{\{y\in \cO :|x-y|\leq|x-z|/2\}} \cdots dy =: I_1+I_2.
\end{align}
Due to the triangle inequality $|x-z|\geq |y-z|-|x-y|$, the inequality $|x-y|\geq|x-z|/2$ implies that $|y-z|\leq 3|x-y|$.  Using the relation (see e.g. \cite[Theorem 1.1]{CK})\begin{align} \label{p est}
p(t,x) \approx t^{-d/\alpha} \wedge \frac{t}{|x-y|^{d+\alpha}},
\end{align}
 we get that  if $|x-y|\geq|x-z|/2$ then
$$
p(t,x-y)\leq N \frac{t}{|x-y|^{d+\alpha}} \leq N \frac{t}{|x-z|^{d+\alpha}}.
$$
Thus,
\begin{align} \label{070422-3}
I_1 &\leq N t|x-z|^{-d-\alpha}\int_{|y-z|\geq d_z} \left( |y-z|^{-d-\alpha} \wedge \frac{|y-z|^{-d-\alpha/2}}{\sqrt{t}} \right) dy \nonumber
\\
&\leq N |x-z|^{-d-\alpha} td_z^{-\alpha}R_{t,z} \leq N |x-z|^{-d-\alpha} R_{t,z}^{-1}.
\end{align}
Here, for the first inequality, we used $d_y\leq |y-z|$ for $y\in \cO$ and $z\in \overline{\cO}^c$.
If $|x-y|<|x-z|/2$, then $|y-z|\geq|x-z|-|x-y|>|x-z|/2$. Thus,
\begin{align*}
I_2 &\leq N \int_{\{y\in \cO :|x-y|\leq|x-z|/2\}} \left( |y-z|^{-d-\alpha} \wedge \frac{|y-z|^{-d-\alpha/2}}{\sqrt{t}} \right) p(t,x-y) dy
\\
&\leq N \int_{\{y\in \cO :|x-y|\leq|x-z|/2\}} |y-z|^{-d-\alpha} p(t,x-y) dy
\\
&\leq N |x-z|^{-d-\alpha} \int_{\bR^d} p(t,x-y)dy = N |x-z|^{-d-\alpha} \leq N|x-z|^{-d-\alpha} R_{t,z}^{-1}.
\end{align*}
This, \eqref{070422-1} and \eqref{070422-3} yield \eqref{para poi aim} when $t\leq |x-z|^\alpha$.

Next, we assume $t>|x-z|^\alpha$. Due to \eqref{p est}, 
\begin{align*}
\int_{\cO} p(t,x-y) R_{t,y} |y-z|^{-d-\alpha} dy &\leq N t^{-d/\alpha} \int_{\cO} R_{t,y}|y-z|^{-d-\alpha} dy
\\
&\leq N t^{-d/\alpha} \int_{|y-z|\geq d_z} \frac{|y-z|^{-d-\alpha/2}}{\sqrt{t}} dy 
\\
&= N t^{-d/\alpha-1/2} d_z^{-\alpha/2} = N t^{-d/\alpha-1} R_{t,z}^{-1}.
\end{align*}
Here, the last equality follows from $t>|x-z|^{\alpha}>d_z^{\alpha}$.
Thus, we also have \eqref{para poi aim} when $t>|x-z|^\alpha$.

$(ii)$ As in the proof of $(i)$, it suffices to show that for $d_z>diam(\cO)$,
\begin{align} \label{para aid dom}
\int_{\cO} p(t,x-y) R_{t,y} |y-z|^{-d-\alpha} dy &\leq N |x-z|^{-d} \left( 1\wedge t^{-d/\alpha-1/2} \right) d_z^{-\alpha}.
\end{align}
Assume $d_z>diam(\cO)$. Since $d_y\leq N$, $d_z\leq |y-z|$ for $y\in \cO$ and $z\in \cO^c$,
\begin{align} \label{070516-1}
\int_{\cO} p(t,x-y) R_{t,y} |y-z|^{-d-\alpha} dy
&\leq \left(1 \wedge \frac{1}{\sqrt{t}}\right) \int_{\cO} p(t,x-y)|y-z|^{-d-\alpha}dy \nonumber
\\
&\leq N \left(1 \wedge \frac{1}{\sqrt{t}}\right) d_z^{-d-\alpha} \int_{\cO} p(t,x-y) dy.
\end{align}
By \eqref{p est},
\begin{align} \label{07.19}
\int_{\cO} p(t,x-y) dy \leq \left(\int_{\bR^d} p(t,x-y) dy\right) \wedge \left(\int_{\cO} t^{-d/\alpha} dy\right) \leq N \left(1 \wedge t^{-d/\alpha} \right).
\end{align}
Moreover, due to $d_z>diam(\cO)$, we have $|x-z|\approx d_z$. 
Thus, \eqref{070516-1} and \eqref{07.19} yield \eqref{para aid dom}. The lemma is proved.
\end{proof}

\begin{corollary} \label{Kd est}
 There exists $N=N(d,\cO,\alpha)$ such that for any $x \in \cO$ and $z\in \overline{\cO}^c$, 
\begin{align} \label{poisson half}
K_{\cO}(x,z) \leq N \frac{d_x^{\alpha/2}}{d_z^{\alpha/2}}\frac{1}{|x-z|^d}.
\end{align}
Furthermore, if $\cO$ is  bounded, then 
\begin{align} \label{poisson dom}
K_{\cO}(x,z) \leq N \frac{d_x^{\alpha/2}}{(1+d_z)^{\alpha/2} d_z^{\alpha/2}}\frac{1}{|x-z|^d}.
\end{align}
\end{corollary}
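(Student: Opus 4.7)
The plan is to obtain $K_D$ as the time integral of the parabolic Poisson kernel $Q_D$, and then integrate the bounds from Lemma \ref{Qd est} carefully by splitting the time interval according to the relative sizes of $t$, $d_x^\alpha$, $d_z^\alpha$, and $|x-z|^\alpha$. The starting identity is
\begin{equation*}
K_D(x,z) = c_d \int_D \Bigl( \int_0^\infty p_D(t,x,y)\,dt \Bigr) |y-z|^{-d-\alpha}\,dy = \int_0^\infty Q_D(t,x,z)\,dt,
\end{equation*}
which follows directly from the definitions of $G_D$, $K_D$, and $Q_D$ together with Fubini's theorem.

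For \eqref{poisson half}, I would use \eqref{para poi} and write $a=d_x^{\alpha/2}$, $b=d_z^{\alpha/2}$, $r=|x-z|$; note that $a,b\leq r^{\alpha/2}$ since the segment from $x\in D$ to $z\in \overline{D}^c$ must cross $\partial D$. Split the integral at $t=r^\alpha$. On $(0,r^\alpha]$, the factor $t^{-d/\alpha-1}\wedge r^{-d-\alpha}$ equals $r^{-d-\alpha}$, and after the change of variables $s=\sqrt{t}$ one has to bound
\begin{equation*}
\int_0^{r^{\alpha/2}} \min(1,a/s)\max(1,s/b)\,s\,ds.
\end{equation*}
A short case analysis (based on whether $a\leq b$ or $a>b$, and on the position of $s$ relative to $a$ and $b$) gives the uniform bound $C a r^\alpha/b$, so this piece contributes at most $C (a/b) r^{-d}$. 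On $(r^\alpha,\infty)$, both minima simplify ($t\geq a^2$ and $t\geq b^2$), and one is left with
\begin{equation*}
\int_{r^\alpha}^\infty t^{-d/\alpha-1}\cdot \frac{a}{b}\,dt = C\,\frac{a}{b}\,r^{-d},
\end{equation*}
which matches the first piece. Combining gives \eqref{poisson half}.

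For \eqref{poisson dom}, I would reduce to \eqref{poisson half} when $d_z\leq \mathrm{diam}(D)$ (since then $(1+d_z)^{\alpha/2}$ is comparable to a constant) and treat the case $d_z>\mathrm{diam}(D)$ separately using the sharper estimate \eqref{para poi dom}. In this remote case $|x-z|\approx d_z$ and $d_x^{\alpha/2}\leq C$, so one only needs
\begin{equation*}
\int_0^\infty \bigl(1\wedge t^{-d/\alpha-1/2}\bigr)\bigl(1\wedge d_x^{\alpha/2}/\sqrt{t}\bigr)\,dt \leq C\,d_x^{\alpha/2},
\end{equation*}
which is immediate by splitting at $t=d_x^\alpha$ and $t=1$ and using that $d/\alpha+1/2>1/2$. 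This produces the extra factor $d_z^{-\alpha}\approx (1+d_z)^{-\alpha/2}d_z^{-\alpha/2}$ needed for \eqref{poisson dom}.

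The only delicate point is the bookkeeping in the case split on $(0,r^\alpha]$; there are several sub-regions depending on the relative sizes of $a$, $b$, and $s$, but in each region the integrand is an explicit monomial in $s$ and the dominant contribution always comes from $s$ near $r^{\alpha/2}$, yielding the uniform bound $Car^\alpha/b$ after elementary computations. Once that integral is handled cleanly, both \eqref{poisson half} and \eqref{poisson dom} follow.
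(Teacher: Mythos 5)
Your proposal is correct and follows essentially the same route as the paper: write $K_D(x,z)=\int_0^\infty Q_D(t,x,z)\,dt$, integrate the bound \eqref{para poi} with a case split on $t$ relative to $d_x^\alpha$, $d_z^\alpha$, $|x-z|^\alpha$ (the paper splits into three pieces at $d_z^\alpha$ and $|x-z|^\alpha$ rather than your two pieces at $|x-z|^\alpha$, but the computations are equivalent), and treat the far-field case $d_z>\mathrm{diam}(D)$ separately via \eqref{para poi dom} exactly as you describe.
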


\begin{proof}
 By Fubini's theorem and \eqref{para poi}, we have
\begin{align*} 
K_{\cO}(x,z) &=\int_0^\infty Q_{\cO}(t,x,z) dt 
\\
&\leq N \int_0^\infty \left( t^{-d/\alpha-1} \wedge |x-z|^{-d-\alpha} \right) \left( 1\wedge \frac{d_x^{\alpha/2}}{\sqrt{t}} \right) \left( 1\wedge \frac{d_z^{\alpha/2}}{\sqrt{t}} \right)^{-1} dt
\\
&= N \left(\int_0^{d_z^\alpha} \cdots dt + \int_{d_z^\alpha}^{|x-z|^{\alpha}} \cdots dt + \int_{|x-z|^{\alpha}}^\infty \cdots dt \right) 
\\
&=: N (I+II+III).
\end{align*}
Using the relation $|x-z|>d_z$, one can easily show that
\begin{align*}
I &\leq \int_0^{d_z^\alpha} |x-z|^{-d-\alpha} \frac{d_x^{\alpha/2}}{\sqrt{t}} dt \leq N |x-z|^{-d-\alpha} d_x^{\alpha/2} d_z^{\alpha/2} \leq N |x-z|^{-d} d_x^{\alpha/2} d_z^{-\alpha/2}.
\end{align*}
For $II$,
\begin{align*}
II \leq |x-z|^{-d-\alpha} d_x^{\alpha/2} d_z^{-\alpha/2} \int_{d_z^\alpha}^{|x-z|^{\alpha}} dt \leq N |x-z|^{-d} d_x^{\alpha/2} d_z^{-\alpha/2}.
\end{align*}
Lastly,
\begin{align*}
III &\leq d_x^{\alpha/2} d_z^{-\alpha/2} \int_{|x-z|^{\alpha}}^\infty t^{-d/\alpha-1} dt \leq N |x-z|^{-d} d_x^{\alpha/2} d_z^{-\alpha/2}.
\end{align*}
Thus, we have \eqref{poisson half}.

Next we prove \eqref{poisson dom}. Since $1+d_z\approx 1$ if $d_z\leq diam(\cO)$, one can easily get \eqref{poisson dom} from \eqref{poisson half}. Thus, we now assume $d_z> diam(\cO)$.  By Fubini's theorem and \eqref{para poi dom},
\begin{align*} 
K_{\cO}(x,z) &\leq N d_z^{-\alpha} |x-z|^{-d} \int_0^\infty \left( 1\wedge t^{-d/\alpha-1/2} \right) \left( 1\wedge \frac{d_x^{\alpha/2}}{\sqrt{t}} \right) dt
\\
&\leq N d_z^{-\alpha} d_x^{\alpha/2} |x-z|^{-d} \int_0^\infty \left( 1\wedge t^{-d/\alpha-1} \right) dt \leq N d_z^{-\alpha} d_x^{\alpha/2} |x-z|^{-d}.
\end{align*}
Due to $1+d_z\approx d_z$, this actually yields \eqref{poisson dom}. The corollary is proved.
\end{proof}

Denote
$$
\cQ_{\cO} g(t,x):=\int_0^t\int_{\overline{\cO}^c} Q_{\cO}(t-s,x,z)g(s,z)dz ds,
$$
and
$$
\cK_{\cO} g(x):=\int_{\overline{\cO}^c} K_{\cO}(x,z)g(z)dz.
$$

\begin{lem} \label{lem para pp}
Let $\cO$ be a half space and suppose $d-1<\theta<d-1+p$.
 Then, there exists $N=N(d,\alpha,\theta,p,\cO)$ such that  for any  $g\in \psi^{\alpha/2}\bL_{p,\theta}(\overline{\cO}^c,T)$,
 \begin{align*}
\|\psi^{-\alpha/2}\cQ_{\cO} g\|_{\bL_{p,\theta}(\cO,T)} \leq N \|\psi^{-\alpha/2}g\|_{\bL_{p,\theta}(\overline{\cO}^c,T)}.
\end{align*}
\end{lem}

\begin{proof}
Since $\psi\approx d_x$, it suffices to show
\begin{align} \label{ineq 1101}
\int_0^T \int_{\cO} d_x^{\mu-\alpha p/2} |\cQ_{\cO} g(t,x)|^p dxdt \leq N \int_0^T \int_{\overline{\cO}^c} d_z^{\mu-\alpha p/2} |g(t,z)|^p dzdt,
\end{align}
where $\mu=\theta-d$. 
 Since $\mu\in(-1,p-1)$, one can take $\upsilon_0$ satisfying
\begin{align} \label{ineq. 7.13}
-2+\frac{2}{p}=-\frac{2}{p'}<\upsilon_0<\left(2+\frac{2}{\alpha}\right)\frac{1}{p'}=\left(2+\frac{2}{\alpha}\right)\frac{p-1}{p},
\end{align}
and
\begin{align} \label{ineq. 7.13.1}
\frac{2\mu}{\alpha p}-1-\frac{2}{p}<\upsilon_0<\frac{2\mu}{\alpha p}-1+\frac{2}{p}+\frac{2}{\alpha p}, \quad \upsilon_0 \neq \frac{2p-2}{\alpha p}, \quad \upsilon_0 \neq \frac{2\mu}{\alpha p}-1+\frac{2}{\alpha p}.
\end{align}
Since $d-1<\theta<d-1+p$, one can also take $\upsilon_1$ and $\upsilon_2$ satisfying
\begin{align} \label{ineq. 7.13.2}
-1 < \upsilon_0-\upsilon_1 < -1 + \frac{2\mu}{\alpha p} + \frac{2}{\alpha p} \,\text{ and }\, \frac{2\mu}{\alpha p}-1<\upsilon_0+\upsilon_2<\frac{2(p-1)}{\alpha p} - 1.
\end{align}
 By \eqref{para poi} and H\"older's inequality,
\begin{align*}
|\cQ_{\cO} g(t,x)| &\leq N \left(\int_0^t \int_{\overline{\cO}^c} S_{t-s,x,z} d_z^{\alpha\upsilon_0p/2} R_{t-s,x}^{\upsilon_1 p} R_{t-s,z}^{\upsilon_2 p} |g(s,z)|^p dzds \right)^{1/p}
\\
&\quad \times \left(\int_0^t \int_{\overline{\cO}^c}  S_{t-s,x,z} d_z^{-\alpha\upsilon_0p'/2} R_{t-s,x}^{(1-\upsilon_1) p'} R_{t-s,z}^{(-1-\upsilon_2) p'} dzds \right)^{1/p'}
\\
&=:N\times I(t,x) \times II(t,x),
\end{align*}
where $p'=p/(p-1)$, $S_{t,x,z}:=t^{-d/\alpha-1}\wedge |x-z|^{-d-\alpha}$ and $R_{t,x}:=1\wedge \frac{d_x^{\alpha/2}}{\sqrt{t}}$. By Lemma \ref{aux para half} with $\nu_0=(-1-\upsilon_2)p'$ and $\nu_1=-\upsilon_0p'$, and the change of variables,
\begin{align} \label{ineq. 07.27.}
&II(t,x)^{p'} \nonumber
\\
&\leq N \int_0^t \left( (t-s)^{-\upsilon_0p'/2-1} \wedge \left( (t-s)^{-\upsilon_0p'/2+1/\alpha}d_x^{-1-\alpha} \vee d_x^{-\alpha\upsilon_0p'/2-\alpha} \right) \right) \nonumber
\\
& \qquad\qquad\qquad\qquad\qquad\qquad\qquad\qquad\qquad\qquad\qquad\qquad\qquad\qquad\times R_{t-s,x}^{(1-\upsilon_1)p'} ds \nonumber
\\
&\leq N \int_0^t \left( s^{-\upsilon_0p'/2-1} \wedge \left( s^{-\upsilon_0p'/2+1/\alpha}d_x^{-1-\alpha} \vee d_x^{-\alpha\upsilon_0p'/2-\alpha} \right) \right) \nonumber
\\
& \qquad\qquad\qquad\qquad\qquad\qquad\qquad\qquad\qquad\qquad\qquad\quad \times \frac{d_x^{(1-\upsilon_1)p'\alpha/2}}{(\sqrt{s}+d_x^{\alpha/2})^{(1-\upsilon_1)p'}} ds \nonumber
\\
&= N d_x^{-\alpha\upsilon_0p'/2} \int_0^\infty \left( s^{-\upsilon_0p'/2-1} \wedge \left( s^{-\upsilon_0p'/2+1/\alpha} \vee 1 \right) \right) (\sqrt{s}+1)^{(\upsilon_1-1)p'} ds \nonumber
\\
&\leq N d_x^{-\alpha\upsilon_0p'/2}.
\end{align}
Here, the last inequality is due to $-\upsilon_0p'/2+1/\alpha>-1$ and $-\upsilon_0p'/2-1+(\upsilon_1-1)p'/2<-1$ which follow from \eqref{ineq. 7.13} and \eqref{ineq. 7.13.2}, respectively.
By Fubini's theorem,
\begin{align*}
&\int_0^T \int_{\cO} d_x^{\mu-\alpha p/2} |\cQ_{\cO} g(t,x)|^p dxdt 
\\
&\leq N \int_0^T \int_{\overline{\cO}^c} d_z^{\alpha\upsilon_0p/2} |g(s,z)|^p 
\\
&\qquad \qquad \times \left(\int_s^T \int_{\cO} S_{t-s,x,z} d_x^{\mu - \alpha p/2 - \alpha\upsilon_0 p/2} R_{t-s,x}^{\upsilon_1 p}  R_{t-s,z}^{\upsilon_2 p} dxdt \right) dzds.
\end{align*}
As in \eqref{ineq. 07.27.}, by \eqref{ineq. 7.13.1}, \eqref{ineq. 7.13.2}, and Lemma \ref{aux para half} with $\nu_0=\upsilon_1p$ and $\nu_1=2\mu/\alpha -p -\upsilon_0p$,
\begin{align*}
&\int_s^T \int_{\cO} S_{t-s,x,z} d_x^{\mu-\alpha p/2 - \alpha\upsilon_0 p/2} R_{t-s,x}^{\upsilon_1 p}  R_{t-s,z}^{\upsilon_2 p} dxdt \leq N d_z^{\mu-\alpha p/2-\alpha\upsilon_0p/2}.
\end{align*}
Thus, we have \eqref{ineq 1101}, and the lemma is proved.
\end{proof}

\begin{lem} \label{lem zeroth para}
Let $\cO$ be bounded, $d-1<\theta<d-1+p$ and $\sigma >-\theta-\alpha p/2$.
 Then, there exists $N=N(d,\alpha,\theta,p,\sigma,\cO)$ such that for any  $g\in \psi^{\alpha/2}\bL_{p,\theta,\sigma}(\overline{\cO}^c,T)$,
 \begin{align*}
\|\psi^{-\alpha/2}\cQ_{\cO} g\|_{\bL_{p,\theta}(\cO,T)} \leq N \|\psi^{-\alpha/2} g\|_{\bL_{p,\theta,\sigma}(\overline{\cO}^c,T)}.
\end{align*}
\end{lem}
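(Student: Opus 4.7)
The plan is to split $g$ according to the distance of its support from $\partial D$ and reduce each piece to an already-handled situation. Since $D$ is bounded, fix $R > \mathrm{diam}(D)$ and write $g = g_1 + g_2$ with $g_1(s,z) = g(s,z)\mathbf{1}_{\{d_z \le R\}}(z)$ and $g_2 = g - g_1$, so that $\cQ_D g = \cQ_D g_1 + \cQ_D g_2$.

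For $g_1$, the factor $(1+d_z)^\sigma$ is uniformly bounded above and below on its support, hence
\[
\|\psi^{-\alpha/2}g_1\|_{\bL_{p,\theta}(\overline{D}^c,T)} \approx \|\psi^{-\alpha/2}g_1\|_{\bL_{p,\theta,\sigma}(\overline{D}^c,T)}.
\]
Since Lemma \ref{Qd est}$(i)$ provides exactly the same pointwise upper bound \eqref{para poi} on $Q_D$ in the bounded case as in the half-space case, the H\"older-plus-Fubini argument used in the proof of Lemma \ref{lem para pp} applies with only cosmetic modifications (the same choices of auxiliary parameters $\upsilon_0, \upsilon_1, \upsilon_2$ still work) and yields
\[
\|\psi^{-\alpha/2}\cQ_D g_1\|_{\bL_{p,\theta}(D,T)} \le C\,\|\psi^{-\alpha/2}g_1\|_{\bL_{p,\theta,\sigma}(\overline{D}^c,T)}.
\]

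For the far-field piece $g_2$, note that $|x-z| \approx d_z \approx 1+d_z$ uniformly for $x \in D$ and $d_z > R$, so Lemma \ref{Qd est}$(ii)$ provides a bound that factorizes:
\[
Q_D(\tau,x,z) \le C\, d_z^{-d-\alpha}\bigl(1 \wedge \tau^{-d/\alpha - 1/2}\bigr)\bigl(1 \wedge d_x^{\alpha/2}/\sqrt{\tau}\bigr).
\]
The idea is to apply H\"older's inequality in the $z$-variable with an auxiliary density $d_z^{-\eta}$ chosen so that the conjugate exponent reproduces the norm $\|\psi^{-\alpha/2}g_2\|_{\bL_{p,\theta,\sigma}(\overline{D}^c,T)}^p$ (using $d_z \approx 1+d_z$ on the support of $g_2$). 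The resulting scalar $z$-integral
\[
\int_{\{d_z > R\}} d_z^{-d-\alpha - \eta/(p-1)}\, dz
\]
converges precisely under the hypothesis $\sigma > -\theta - \alpha p/2$. The remaining integral over $(t,x)$ then separates into a time integral involving $(1 \wedge \tau^{-d/\alpha - 1/2})$ (finite since $d/\alpha + 1/2 > 1$) and a weighted spatial integral handled by splitting $D$ into $\{d_x^{\alpha} \le \tau\}$ and its complement; these are combined via Minkowski's inequality or Young's inequality on the half-line to give the bound for $\cQ_D g_2$.

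The main obstacle is the exponent bookkeeping in the far-field H\"older step: one must choose $\eta$ so that the threshold $\sigma > -\theta - \alpha p/2$ is exactly what forces convergence of the $z$-integral, while simultaneously leaving an $x$-weight that is manageable in combination with the time kernel. Once this choice is made, the time and $d_x$-weighted spatial estimates are routine and the argument closes.
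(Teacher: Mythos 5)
Your proposal follows essentially the same route as the paper: a near/far splitting of $g$ at distance comparable to $\mathrm{diam}(D)$, with the near piece absorbed into the argument of Lemma \ref{lem para pp} (since $(1+d_z)^\sigma\approx 1$ there) and the far piece handled via \eqref{para poi dom}, H\"older's inequality in $z$ with a power of $d_z$ whose conjugate integral converges exactly when $\sigma>-\theta-\alpha p/2$, followed by Fubini and routine time/space integrals. The exponent bookkeeping you defer is carried out in the paper with $\kappa=\sigma/p+\theta/p+\alpha/2$ (so the conjugate $z$-integral needs $\kappa>0$) together with an auxiliary time exponent $\upsilon$ and Lemma \ref{aux dom}, and it closes as you anticipate.
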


\begin{proof}
Let $\Phi\in C_c^\infty(\bR^d)$ be a cut-off function such that $\Phi(x)=1$ if $d_x\leq diam(\cO)$, and $\Phi(x)=0$ if $d_x>2diam(\cO)$. Then, $g(t,z)\Phi(z)=0$ if $d_z>2diam(\cO)$, and thus $g\Phi \in \psi^{\alpha/2}\bL_{p,\theta}(\overline{\cO}^c)$.
By repeating the same argument used in the proof of Lemma \ref{lem para pp},
\begin{align*}
\|\psi^{-\alpha/2}\cQ_{\cO} (g\Phi)\|_{\bL_{p,\theta}(\cO,T)}
 \leq N\|\psi^{-\alpha/2} g\Phi\|_{\bL_{p,\theta}(\overline{\cO}^c,T)} \leq N \|\psi^{-\alpha/2} g\|_{\bL_{p,\theta,\sigma}(\overline{\cO}^c,T)}.
\end{align*}
Hence, it suffices to assume that $g(t,z)=0$ if $d_z \leq diam(\cO)$.
Denote $\mu:=\theta-d$, $\kappa:=\sigma/p+\theta/p+\alpha/2$, and $p'=p/(p-1)$. Take $\upsilon$ such that
\begin{align} \label{ineq. 07.27-1}
-\frac{2\mu+2}{\alpha p}-1+\frac{2}{p}<\upsilon<-1+\frac{2}{p}.
\end{align}
 By \eqref{para poi dom} and H\"older's inequality,
\begin{align*}
|\cQ_{\cO} g(t,x)| &\leq N \left(\int_0^t \int_{\overline{\cO}^c} |x-z|^{-d} S_{t-s}^{p} d_z^{-\alpha p +\kappa p} R_{t-s,x}^{\upsilon p}  |g(s,z)|^p dzds \right)^{1/p}
\\
&\quad \times \left(\int_0^t \int_{\{z\in \overline{\cO}^c: d_z>diam(\cO)\}} |x-z|^{-d} d_z^{-\kappa p'} R_{t-s,x}^{(1-\upsilon) p'}  dzds \right)^{1/p'}
\\
&=: N \times I(t,x) \times II(t,x),
\end{align*}
where $p'=p/(p-1)$, $S_{t}:=1\wedge t^{-d/\alpha-1/2}$ and $R_{t,x}:=1\wedge \frac{d_x^{\alpha/2}}{\sqrt{t}}$. 
Since $|x-z|\approx d_z$ if $x\in \cO$ and $d_z>diam(\cO)$,
\begin{align} \label{ineq. 07.31-3}
&\int_{\{z\in \overline{\cO}^c: d_z>diam(\cO)\}} |x-z|^{-d} d_z^{-\kappa p'}  dz \nonumber
\\
&\leq N \int_{|x-z|\geq diam(\cO)+d_x} |x-z|^{-d-\kappa p'} dz \leq N (1+d_x)^{-\kappa p'}.
\end{align}
Thus, by the change of variables and \eqref{ineq. 07.27-1},
\begin{align*}
II(t,x)^{p'} &\leq N (1+d_x)^{-\kappa p'} \int_0^t R_{t-s,x}^{(1-\upsilon) p'} ds
\\
&\leq N d_x^{\alpha}(1+d_x)^{-\kappa p'} \int_0^\infty \left( 1\wedge s^{-1/2} \right)^{(1-\upsilon) p'} ds \leq N d_x^{\alpha}(1+d_x)^{-\kappa p'}.
\end{align*}
Hence, by Fubini's theorem,
\begin{align*}
&\int_0^T \int_{\cO} d_x^{\mu-\alpha p/2} |\cQ_{\cO} g(t,x)|^p dxdt 
\\
&\leq N \int_0^T \int_{\overline{\cO}^c} d_z^{-\alpha p +\kappa p} |g(s,z)|^p 
\\
&\qquad \qquad \times \left(\int_s^T \int_{\cO} |x-z|^{-d} S_{t-s}^p d_x^{\mu + \alpha p/2 -\alpha} (1+d_x)^{-\kappa p} R_{t-s,x}^{\upsilon p} dxdt \right) dzds.
\end{align*}
Since $1+d_x\approx 1$ for $x\in \cO$, by Lemma \ref{aux dom} with $\nu_0=\upsilon p$ and $\nu_1=2\mu/\alpha+p-2$, 
\begin{align*}
&\int_s^T \int_{\cO} |x-z|^{-d} S_{t-s}^p d_x^{\mu + \alpha p/2 -\alpha} (1+d_x)^{-\kappa p} R_{t-s,x}^{\upsilon p} dxdt 
\\
&\leq N \int_s^T d_z^{-d} S_{t-s}^p ((t-s)^{-\upsilon p/2}+1) ds
\\
&\leq N d_z^{-d} \int_0^\infty (s^{-\upsilon p/2}+1) \left( 1\wedge s^{-d/\alpha-1/2} \right)^p ds \leq N d_z^{-d}.
\end{align*}
For the last inequality, we used $-\upsilon p/2>-1$, $-dp/\alpha-p/2<-1$ and $-\upsilon p/2-dp/\alpha-p/2<-1$ which follow from \eqref{ineq. 07.27-1}.
Therefore, 
\begin{align*}
\int_0^T \int_{\cO} d_x^{\mu-\alpha p/2} |\cQ_{\cO} g(t,x)|^p dxdt &\leq N \int_0^T \int_{\overline{\cO}^c} d_z^{-d-\alpha p +\kappa p} |g(s,z)|^p dzds
\\
&\leq N \int_0^T \int_{\overline{\cO}^c} d_z^{\mu-\alpha p/2} (1+d_z)^{\sigma} |g(s,z)|^p dzds.
\end{align*}
The lemma is proved.
\end{proof}

\begin{lem} \label{lem zeroth ell}
Let $\alpha\in(0,2)$ and $p\in (1,\infty)$. Suppose that $d-1<\theta<d-1+p$ and $\sigma >-\theta-\alpha p/2$.

(i) There exists $N=N(d,\alpha,\theta,p,\cO)$ such that for any $g\in \psi^{\alpha/2}L_{p,\theta}(\overline{\cO}^c)$,
\begin{align} \label{22.06.04.1700}
\| \psi^{-\alpha/2} \cK_{\cO} g \|_{L_{p,\theta}(\cO)} \leq N \| \psi^{-\alpha/2} g \|_{L_{p,\theta}(\overline{\cO}^c)}.
\end{align}

(ii) If $\cO$ is  bounded, then there exists $N=N(d,\alpha,\theta,p,\cO,\sigma)$ such that for any $g\in \psi^{\alpha/2}L_{p,\theta}(\overline{\cO}^c)$,
\begin{align} \label{ineq 1105-3}
\| \psi^{-\alpha/2} \cK_{\cO} g \|_{L_{p,\theta}(\cO)} \leq N \| \psi^{-\alpha/2} g \|_{L_{p,\theta,\sigma}(\overline{\cO}^c)}.
\end{align}
\end{lem}

\begin{proof}
$(i)$ Since $\mu:=\theta-d\in(-1,p-1)$, one can take $\upsilon\in\bR$ satisfying
\begin{align} \label{ineq. 07.31-1}
0<\upsilon <\frac{p-1}{p} \,\text{ and }\, \frac{\theta-d}{p} < \upsilon < \frac{\theta-d}{p} + \frac{1}{p}.
\end{align}
Due to \eqref{ineq. 07.31-1}, one can apply \eqref{aux ineq 1} with $\nu_0=-\upsilon p'$ and $\nu_1=0$ to get
\begin{align*}
\int_{\overline{\cO}^c} d_z^{-\upsilon p'}|x-z|^{-d} dz &\leq N d_x^{-\upsilon p'}.
\end{align*}
Thus, by \eqref{poisson half} and H\"older's inequality,
\begin{align*}
|\cK_{\cO} g(x)|^p &\leq N d_x^{\alpha p/2-\nu p} \int_{\overline{\cO}^c} |g(z)|^p d_z^{\upsilon p -\alpha p /2} |x-z|^{-d} dz,
\end{align*}
where $p'=p/(p-1)$. 
By Fubini's theorem,
\begin{align*}
\int_{\cO} d_x^{\mu-\alpha p/2} |\cK_{\cO} g(x)|^p dx &\leq N \int_{\cO} \int_{\overline{\cO}^c} d_x^{\mu-\upsilon p}  |g(z)|^p d_z^{\upsilon p - \alpha p /2}|x-z|^{-d} dzdx.
\end{align*}
Again by \eqref{aux ineq 1} with $\nu_0=\mu-\upsilon p$, we have \eqref{22.06.04.1700}.

$(ii)$ Let $\cO$ be bounded and $\kappa:=\sigma/p+\theta/p+\alpha/2$. As in the proof of Lemma \ref{lem zeroth para}, we assume $g(z)=0$ if $d_z\leq diam(\cO)$. By H\"older's inequality, \eqref{poisson dom}, and \eqref{ineq. 07.31-3},
\begin{align*}
|\cK_{\cO} g(x)|^p &\leq N d_x^{\alpha p/2} (1+d_x)^{-\kappa p} \int_{\overline{\cO}^c} |g(z)|^p d_z^{\kappa p -\alpha p /2} (1+d_z)^{-\alpha p/2} |x-z|^{-d} dz.
\end{align*}
Applying Fubini's theorem and Lemma \ref{23.05.07.1603} with $t=1$, $\nu_0=0$ and $\nu_1=2\mu/\alpha$ in order (note also $1+d_x \approx 1$), we get
\begin{align*}
\int_{\cO} d_x^{\mu-\alpha p/2} |\cK_{\cO} g(x)|^p &\leq N \int_{\overline{\cO}^c} \int_{\cO} d_x^{\mu} |g(z)|^p d_z^{\kappa p - \alpha p /2} (1+d_z)^{-\alpha p /2} |x-z|^{-d} dxdz
\\
&\leq N \int_{\overline{\cO}^c} d_z^{\mu-\alpha p/2} (1+d_z)^{\kappa p-\mu -d -\alpha p /2} |g(z)|^p dz.
\end{align*}
Thus, we have \eqref{ineq 1105-3} and the lemma is proved.
\end{proof}

\begin{lem} \label{lem prob para}
Let  $\theta\in(d-1,d-1+p)$ and $g\in C_c^\infty((0,T)\times\overline{\cO}^c)$. If we define
\begin{align*}
u(t,x):=\begin{cases}\cQ_{\cO} g(t,x)=\int_0^t \int_{\overline{\cO}^c} Q_{\cO} (t-s,x,z) g(s,z) dzds \quad &:   \, x\in \cO 
\\
g(t,x) \quad &:  \, x\in \overline{\cO}^c
\end{cases}
\end{align*}
then $r_{(0,T)\times \cO}u\in \frH_{p,\theta}^{0}(\cO,T)$, and $u$ becomes a  weak solution to \eqref{parabolic sec 3} in the sense of Definition \ref{sol def}.
\end{lem}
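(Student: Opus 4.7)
The plan is to identify $u|_D = \cQ_D g$ with the zero-exterior parabolic solution on $D$ driven by an explicit smooth source, and then derive the weak equation for $u$ via the distributional definition of $\Delta^{\alpha/2}$ furnished by Lemma \ref{thm frac def}.

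First, by inserting the defining formula $Q_D(t-s,x,z) = c_d \int_D p_D(t-s,x,y)|y-z|^{-d-\alpha}\,dy$ and applying Fubini, I would rewrite, for $x \in D$,
$$u(t,x) = \int_0^t \int_D p_D(t-s,x,y)J(s,y)\,dy\,ds =: v(t,x), \quad J(s,y):=c_d \int_{\overline{D}^c}\frac{g(s,z)}{|y-z|^{d+\alpha}}\,dz.$$
Since $g \in C_c^\infty((0,T)\times\overline{D}^c)$, the function $J$ is smooth on $(0,T)\times D$ with adequate decay. Two algebraic identities will drive the proof: for any $\phi \in C_c^\infty(D)$ and $z \in \overline{D}^c$,
$$\Delta^{\alpha/2}\phi(z) = c_d \int_D \frac{\phi(x)}{|x-z|^{d+\alpha}}\,dx, \qquad (g(s,\cdot), \Delta^{\alpha/2}\phi)_{\overline{D}^c} = (J(s,\cdot), \phi)_D,$$
the second being just Fubini applied to the first.

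Next I verify each ingredient of $u \in \frH^0_{p,\theta}(D,T)$. The $L_p$-bound
$$\|\psi^{-\alpha/2}u\|_{\bL_{p,\theta}(D,T)} \leq C\|\psi^{-\alpha/2}g\|_{\bL_{p,\theta,\sigma}(\overline{D}^c,T)}$$
is Lemma \ref{lem para pp} (half-space) or Lemma \ref{lem zeroth para} (bounded case), with the right-hand side finite because $g \in C_c^\infty$; the initial value $u(0,\cdot)\equiv 0$ is immediate from the empty time integral and the fact that $g(0,\cdot)=0$. The candidate time derivative is $\tilde f := \Delta^{\alpha/2}u$, understood in the distributional sense of Lemma \ref{thm frac def}(iii); taking $\gamma=0$ and (for instance) $\lambda = 0$, the inequality \eqref{ineq 1025-1} applied slicewise in $t$ and integrated yields $\|\psi^{\alpha/2}\tilde f\|_{\bH^{-\alpha}_{p,\theta}(D,T)} < \infty$.

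Finally I prove the integral identity $(u(t,\cdot),\phi)_D = \int_0^t (\Delta^{\alpha/2}u(s,\cdot),\phi)_D\,ds$ for every $\phi \in C_c^\infty(D)$. By the standard Duhamel formula for the killed fractional heat kernel applied to the smooth source $J$, the function $v$, extended by zero to $\overline{D}^c$, satisfies
$$(v(t,\cdot),\phi)_D = \int_0^t (\Delta^{\alpha/2}v(s,\cdot),\phi)_D\,ds + \int_0^t (J(s,\cdot),\phi)_D\,ds,$$
where $\Delta^{\alpha/2}v$ is computed on the zero-extension. Combining this with Lemma \ref{thm frac def}(iii), which gives
$$(\Delta^{\alpha/2}u(s,\cdot),\phi)_D = (v(s,\cdot),\Delta^{\alpha/2}\phi)_D + (g(s,\cdot),\Delta^{\alpha/2}\phi)_{\overline{D}^c} = (\Delta^{\alpha/2}v(s,\cdot),\phi)_D + (J(s,\cdot),\phi)_D,$$
yields the desired weak equation. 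The main delicate point is justifying the Duhamel identity above in the required distributional sense; this reduces to the standard fact that $p_D$ is the transition density of the $\alpha$-stable process killed upon exiting $D$, combined with Lemma \ref{pD est}, and is straightforward given the smoothness and compact support of $g$.
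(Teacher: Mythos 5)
Your proposal is correct and follows essentially the same route as the paper: both rest on the identity $(g(s,\cdot),\Delta^{\alpha/2}\phi)_{\overline{D}^c}=(J(s,\cdot),\phi)_D$ with $J(s,y)=c_d\int_{\overline{D}^c}g(s,z)|y-z|^{-d-\alpha}dz$, the zeroth-order bounds of Lemmas \ref{lem para pp}/\ref{lem zeroth para} together with Lemma \ref{thm frac def} and Remark \ref{remark 3.18} for membership in $\frH^{0}_{p,\theta}(D,T)$, and the killed-semigroup Duhamel identity for the time-integrated weak formulation. The only point you defer --- justifying that Duhamel identity in the weak sense for the source $J$, which is smooth and bounded but not compactly supported in $D$ --- is exactly what the paper carries out by Fubini using $T^D_{s-r}\Delta^{\alpha/2}\phi=\partial_s T^D_{s-r}\phi$ and the strong continuity of $T^D_s$ at $s=0$ from \cite[Lemma 8.4]{Zhang Dirichlet}.
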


\begin{proof}
First, by Lemma \ref{lem zeroth para} we have $r_{(0,T)\times\cO}u\in \psi^{\alpha/2}\bL_{p,\theta}(\cO,T)$, and by Lemma \ref{thm frac def} we also have $u(t,\cdot)\in \cD'(\bR^d)$ and $\Delta^{\alpha/2}u \in \bH^{-\alpha}_{p,\theta+\alpha p/2}(\cO,T)$.
 Thus, thanks to  Remark \ref{remark 3.18},  it is enough to prove that  $u$ is a weak solution to \eqref{parabolic sec 3}.

Let $\phi\in C_c^\infty(\cO)$. For $x\in \cO$ and $z\in supp(g)$, $|x-z|\geq dist(supp(g), \partial \cO)$. Thus, by \eqref{para poi} or \eqref{para poi dom},
\begin{align*} 
Q_{\cO}(s-r,x,z)|g(r,z)| &\leq N(supp(g),\alpha,d,\cO) |g(r,z)|.
\end{align*}
Thus, one can apply Fubini's theorem to get
\begin{align*}
&\int_0^t (u(s,\cdot), r_{\cO}\Delta^{\alpha/2} (e_{\cO}\phi))_{\cO} ds
\\
&= c_d \int_0^t \int_{\overline{\cO}^c} \int_{\cO} \int_r^t g(r,z)|y-z|^{-d-\alpha} T^{\cO}_{s-r}(r_{\cO}\Delta^{\alpha/2} (e_{\cO}\phi))(y) ds dydzdr,
\end{align*}
where $c_d=\frac{2^{\alpha}\Gamma(\frac{d+\alpha}{2})}{\pi^{d/2}|\Gamma(-\alpha/2)|}$ and 
$$
T^{\cO}_s\varphi(y):=\int_{\cO} p_{\cO}(s,x,y) \varphi(x)dy = \int_{\cO} p_{\cO}(s,y,x) \varphi(x)dy.
$$
Here, we remark that $\{T^{\cO}_s\}_{s\geq0}$ is a Feller semigroup in $L_\infty(\cO)$ (see e.g. \cite[Example 1.3]{BS13} and page 68 of \cite{C86}). Thus, we have $\partial_s T^{\cO}_s \varphi = T^{\cO}_s (A^{\cO}\varphi)$, and
$$
\lim_{s\to0+} \|T^{\cO}_s\phi-\phi\|_{L_\infty(\cO)}=0,
$$
where $A^{\cO}$ is the infinitesimal generator of $\{T^{\cO}_s\}_{s\geq0}$. Here, by \cite[Lemma 2.6]{BLM18}, we have $r_{\cO}\Delta^{\alpha/2} (e_{\cO}\phi)=A^{\cO}\phi$, and thus $\partial_s T^{\cO}_s \phi=T^{\cO}_s(r_{\cO}\Delta^{\alpha/2} (e_{\cO}\phi))$.

Thus,
$$
\int_r^t T^{\cO}_{s-r}(r_{\cO}\Delta^{\alpha/2} (e_{\cO}\phi))(y) ds = T^{\cO}_{t-r}\phi (y) - \phi(y) = \int_{\cO} p_{\cO}(t-r,x,y)\phi(x)dx - \phi(y).
$$
Since $\phi \in C_c^\infty(\cO)$, for $z\in \cO^c$,
$$
 c_d \int_{\cO} \phi(y)|y-z|^{-d-\alpha}dy =\Delta^{\alpha/2}(e_{\cO}\phi)(z),
$$
 applying Fubini's theorem again we have
\begin{align*}
&\int_0^t (r_{(0,T)\times\cO}u(s,\cdot), r_{\cO}\Delta^{\alpha/2} (e_{\cO}\phi))_{\cO} ds 
\\
&=(r_{(0,T)\times\cO}u(t,\cdot),\phi)_{\cO} -\int_0^t (g(r,\cdot), r_{\overline{\cO}^c}\Delta^{\alpha/2} (e_{\cO}\phi))_{\overline{\cO}^c} dr.
\end{align*}
This actually shows that $u$ is a weak solution to \eqref{parabolic sec 3}. The lemma is proved.
\end{proof}

\begin{lem} \label{lem prob ellip}
Let  $\theta\in(d-1,d-1+p)$ and $g\in C_c^\infty(\overline{\cO}^c)$. If we define
$$
u(x):= \begin{cases} \cK_{\cO} g(x) = \int_{\overline{\cO}^c} K_{\cO}(x,z) g(z) dz\quad &:\quad x\in \cO
 \\
 g(x) \quad &: \quad x\in \overline{\cO}^c
 \end{cases}
$$
then $r_{\cO}u \in \psi^{\alpha/2}H_{p,\theta}^{0}(\cO)$, and $u$ becomes a weak solution to \eqref{elliptic sec 3} in the sense of Definition \ref{sol def}.
\end{lem}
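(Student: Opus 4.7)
The plan is to imitate the proof of the parabolic companion Lemma \ref{lem prob para}, replacing the $\int_{0}^{t}$ time integration there by an $\int_{0}^{\infty}$ integration here, which is the natural analogue given the identity $G_D(x,y)=\int_{0}^{\infty}p_D(t,x,y)\,dt$ that links $K_D$ and $Q_D$. First I would verify the analytic memberships. Since $g\in C_c^{\infty}(\overline{D}^c)$ has support at positive distance from $\partial D$, the weight $\psi^{-\alpha/2}g$ is bounded with compact support, so $g\in\psi^{\alpha/2}L_{p,\theta}(\overline{D}^c)$ (and in $\psi^{\alpha/2}L_{p,\theta,\sigma}(\overline{D}^c)$ for any $\sigma$ in the bounded case); Lemma \ref{lem zeroth ell} then yields $u\in\psi^{\alpha/2}H^{0}_{p,\theta}(D)$. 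The identity $u=g$ on $\overline{D}^c$ holds by construction, and because $u\in\psi^{\alpha/2}H^{0}_{p,\theta}(D)\cap\psi^{\alpha/2}H^{\lambda}_{p,\theta,\sigma}(\overline{D}^c)$ for all $\lambda\in\bR$, Lemma \ref{thm frac def} guarantees $u\in\cD(\bR^d)$ and that $\Delta^{\alpha/2}u$ defined via \eqref{frac dist} is a well-defined distribution on $D$.

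The main computational step is to check $(\Delta^{\alpha/2}u,\phi)_D=0$ for every $\phi\in C_c^{\infty}(D)$. By \eqref{frac dist} this amounts to showing
$$\int_D \cK_D g(x)\,\Delta^{\alpha/2}\phi(x)\,dx=-\int_{\overline{D}^c}g(z)\,\Delta^{\alpha/2}\phi(z)\,dz.$$
Substituting the definitions of $\cK_D$ and $K_D$, writing $G_D(x,y)=\int_0^{\infty}p_D(t,x,y)\,dt$, and applying Fubini's theorem, the left side becomes
$$c_d\int_{\overline{D}^c}g(z)\int_D|y-z|^{-d-\alpha}\Bigl(\int_0^{\infty}T^D_t\Delta^{\alpha/2}\phi(y)\,dt\Bigr)dy\,dz,$$
where $T^D_t\psi(y):=\int_Dp_D(t,y,x)\psi(x)\,dx$. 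Fubini is legitimate because on $\mathrm{supp}(g)$ one has $d_z\geq c_0>0$, which together with the bounds \eqref{poisson half}--\eqref{poisson dom} or Lemma \ref{pD est} produces a uniform integrable majorant for the triple integrand times $|\Delta^{\alpha/2}\phi|$ (the latter being a bounded Schwartz-type function). Using the generator identity $T^D_t\Delta^{\alpha/2}\phi=\partial_t T^D_t\phi$ for $\phi\in C_c^{\infty}(D)$ from \cite[Lemma 8.4]{Zhang Dirichlet} (already invoked in the parabolic case), together with $T^D_0\phi=\phi$ and $\lim_{t\to\infty}T^D_t\phi(y)=0$, the inner time integral collapses to $-\phi(y)$. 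Plugging back in and using the identity $c_d\int_D\phi(y)|y-z|^{-d-\alpha}\,dy=\Delta^{\alpha/2}\phi(z)$ for $z\in\overline{D}^c$ (valid since $\phi(z)=0$ there) produces exactly $-\int_{\overline{D}^c}g(z)\Delta^{\alpha/2}\phi(z)\,dz$, completing the proof.

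The main obstacle is the justification of $\lim_{t\to\infty}T^D_t\phi(y)=0$ uniformly enough to invoke Fubini/dominated convergence when passing $\partial_t$ through the time integral. In the bounded case this is immediate from the exponential decay in Lemma \ref{pD est}. In the half-space case $D=\bR^d_+$ it follows from $T^D_t\phi(y)\leq\|\phi\|_{L_\infty}\bP(\tau_D^y>t)\to 0$, where the exit-time decay reduces to the one-dimensional fact that the first coordinate of the isotropic $\alpha$-stable process is itself a one-dimensional symmetric $\alpha$-stable process that almost surely exits any half-line. Once this is in hand the bookkeeping across the $x,y,z,t$ integrations is routine thanks to the compactness of $\mathrm{supp}\,g$ and $\mathrm{supp}\,\phi$ and the estimates in Lemmas \ref{pD est} and \ref{lem zeroth ell}.
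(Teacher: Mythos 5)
Your argument is correct, but it is genuinely different from the paper's. The paper does not compute with the heat kernel at all: it quotes \cite[Theorem 5.5]{Bogdan trace} to get that the Poisson extension satisfies $\cE_D(u,u)<\infty$ and $\cE_D(u,\phi)=0$ for all $\phi\in C^\infty_c(D)$, and then spends its effort showing that the compensated difference quotient defining $\Delta^{\alpha/2}\phi$ converges in $L_1(\bR^d)$, so that the symmetrized double integral $\cE_D(u,\phi)$ can be unfolded (via Fubini and dominated convergence, using $\|u\|_{L_\infty}\leq\|g\|_{L_\infty}$) into $-2\int_{\bR^d}u\,\Delta^{\alpha/2}\phi\,dx$. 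You instead push the parabolic proof of Lemma \ref{lem prob para} through the identity $G_D=\int_0^\infty p_D\,dt$, collapse the time integral with $T^D_t\Delta^{\alpha/2}\phi=\partial_tT^D_t\phi$ and $T^D_t\phi\to0$, and land on $c_d\int_D\phi(y)|y-z|^{-d-\alpha}dy=\Delta^{\alpha/2}\phi(z)$. Your route is more self-contained (it reuses only \cite[Lemma 8.4]{Zhang Dirichlet}, already invoked in the parabolic case, and avoids the extension/trace machinery of \cite{Bogdan trace} entirely) and makes the elliptic and parabolic proofs uniform; the paper's route is shorter given the cited theorem and additionally records finiteness of the Dirichlet energy of $u$.

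One step you should tighten: in the half-space case the "uniform integrable majorant" for Fubini is not obtained by bounding $|\Delta^{\alpha/2}\phi|$ by a constant, since $y$ ranges over all of $D=\bR^d_+$ and $\int_0^\infty\int_Dp_D(t,x,y)\,dy\,dt=\bE^x[\tau_D]=\infty$ there. The clean fix is to integrate the absolute integrand over $y$, $t$ and $z$ first, recognizing
\begin{equation*}
c_d\int_{\overline{D}^c}\int_D\int_0^\infty p_D(t,x,y)\,|y-z|^{-d-\alpha}\,|g(z)|\,dt\,dy\,dz=\int_{\overline{D}^c}K_D(x,z)|g(z)|\,dz\leq\|g\|_{L_\infty},
\end{equation*}
so the full four-fold integral is dominated by $\|g\|_{L_\infty}\|\Delta^{\alpha/2}\phi\|_{L_1(\bR^d)}$, which is finite by the bound $|\Delta^{\alpha/2}\phi(x)|\leq C(\phi)(1+|x|)^{-d-\alpha}+C(\phi)1_{\alpha>1}(1+|x|)^{-d+1-\alpha}$ that the paper itself establishes in its proof. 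With that observation your Fubini application and the absolute convergence of $\int_0^\infty T^D_t\Delta^{\alpha/2}\phi(y)\,dt$ are both secured, and the rest of your computation goes through.
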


\begin{proof}
First, by Lemma \ref{lem zeroth ell} we have $u\in \psi^{\alpha/2}L_{p,\theta}(\cO,T)$. Also by Lemma \ref{thm frac def}, $u\in \cD'(\bR^d)$ and $\Delta^{\alpha/2}u\in H^{-\alpha}_{p,\theta+\alpha p/2}(\cO)$.  

Now we show $\Delta^{\alpha/2}u=0$ in $\cO$ in the sense of distribution. By \cite[Theorem 5.5]{Bogdan trace}, $\cE_{\cO}(u,u)<\infty$ and $\cE_{\cO}(u,\phi)=0$ for all $\phi \in C^{\infty}_c(\cO)$,
where
\begin{align*}
\cE_{\cO}(u,\eta):= c(d,\alpha) \int\int_{\bR^d\times\bR^d\setminus \cO^c\times \cO^c} (u(y)-u(x))(\eta(y)-\eta(x))|x-y|^{-d-\alpha} dydx.
\end{align*}
  Put $\chi(x)=0$ if $\alpha\in (0,1)$, $\chi(x)=1_{|x|<1}$ if $\alpha=1$, and $\chi(x)=1$ if $\alpha\in (1,2)$.  Then for any $\phi\in C_c^\infty(\cO)$,
  \begin{align} \label{eqn 3.19.1}
\Delta^{\alpha/2} (e_{\cO}\phi)(x) &=\lim_{\varepsilon\to0} c_d \int_{|x-y|>\varepsilon} \frac{e_{\cO}\phi(y)-e_{\cO}\phi(x)- \chi(y-x) (y-x)\cdot \nabla \phi (x)}{|x-y|^{d+\alpha}}dy.
\end{align}  
Now we check that  \eqref{eqn 3.19.1} also holds in $L_1(\bR^d)$.  Since $(1+|x|)\leq N(\phi)|x-y|$ if $y\in supp(\phi)$ and $|x-y|\geq1$,
 we have
\begin{align*} \label{frac bdd}
&\int_{\bR^d} \frac{|e_{\cO}\phi(y)-e_{\cO}\phi(x)-(y-x)\cdot \nabla\phi(x) \chi(x-y)|}{|x-y|^{d+\alpha}} dy
\\ 
&\leq \int_{|x-y|>1} \frac{|e_{\cO}\phi(y)|+|e_{\cO}\phi(x)|}{|x-y|^{d+\alpha}} dy + 1_{\alpha>1}\int_{|x-y|>1} \frac{|D_x(e_{\cO}\phi)(x)|}{|x-y|^{d-1+\alpha}} dy
\\
&\quad+ \sup_{z\in \bR^d}|D^2_z\phi(z)| 1_{|x|\leq N(\phi)} \int_{|x-y|\leq1} |x-y|^{2-d-\alpha} dy \nonumber
\\
&\leq N(\phi) (1+|x|)^{-d-\alpha} + N(\phi)1_{\alpha>1} (1+|x|)^{-d+1-\alpha}.
\end{align*}
Since the last term is in $L_1(\bR^d)$, we conclude that  \eqref{eqn 3.19.1}  holds in $L_1(\bR^d)$.
Also note that 
by the definition of $u$, $\|u\|_{L_\infty(\bR^d)} \leq \|g\|_{L_\infty(\cO^c)}$.
Thus, we can apply Fubini's theorem and the dominated convergence theorem to get 
\begin{align*}
 0=\cE_{\cO}(u,\phi) &= -2 c_d\lim_{\varepsilon\to0}\int_{\bR^d} u(x) \int_{|x-y|>\varepsilon} (e_{\cO}\phi(y)-e_{\cO}\phi(x))|x-y|^{-d-\alpha} dydx 
 \\
 &= -2 \int_{\bR^d} u(x)\Delta^{\alpha/2}(e_{\cO}\phi)(x) dx.
\end{align*}
Hence, $u$ is a weak solution to \eqref{elliptic sec 3} in the sense of Definition \ref{sol def}. The lemma is proved.
\end{proof}

\section{Proof of main results}

We first consider the equations with zero exterior condition. Lemmas \ref{lem zero para}-\ref{lem zero initial}, and Lemma \ref{lem zero ellip} are extensions of Theorems 2.9 and 2.10 in \cite{Dirichlet}, respectively. Here, we obtain arbitrary (real) order regularity results.

\begin{lem} \label{lem zero para}
Let $p\in(1,\infty)$, $\gamma\in\bR$,  and $\theta\in(d-1,d-1+p)$. Then, for any $f\in \psi^{-\alpha/2}\bH_{p,\theta}^\gamma(\cO,T)$, the equation
\begin{equation} \label{eqn. zero para}
\begin{cases}
\partial_t u = \Delta^{\alpha/2}u+f,\quad &(t,x)\in(0,T)\times \cO,
\\
u(0,x)=0,\quad & x\in \cO,
\\
u(t,x)=0,\quad &(t,x)\in (0,T)\times \overline{\cO}^c
\end{cases}
\end{equation}
has a unique solution $u\in e_{(0,T)\times \cO}\frH_{p,\theta}^{\gamma+\alpha}(\cO,T)$, and for this solution we have
\begin{align} \label{ineq zero para}
\|r_{(0,T)\times \cO}u\|_{\frH_{p,\theta}^{\gamma+\alpha}(\cO,T)} \leq N \|\psi^{\alpha/2} f\|_{\bH_{p,\theta}^\gamma(\cO,T)}.
\end{align}
\end{lem}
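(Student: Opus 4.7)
The plan is to reduce to the base case in \cite[Theorem 2.9]{Dirichlet}, which furnishes \eqref{ineq zero para} together with well-posedness for $\gamma$ in some positive range, say $\gamma\ge\gamma_0\ge 0$, and then to cover the complementary range by duality. For the a priori estimate at very negative $\gamma$, let $(p',\theta')$ be the conjugate exponents of Lemma \ref{lem space}(iv) (note $\theta'\in(d-1,d-1+p')$) and, for each test function $\phi\in C_c^\infty((0,T)\times D)$, solve the backward adjoint problem
\begin{equation*}
-\partial_t v=\Delta^{\alpha/2}v+\psi^{-\alpha/2}\phi,\qquad v(T,\cdot)=0,\qquad v|_{(0,T)\times\overline{D}^c}=0
\end{equation*}
by time-reversal and the base case, which yields $\|\psi^{-\alpha/2}v\|_{\bH^{-\gamma}_{p',\theta'}(D,T)}\le C\|\phi\|_{\bH^{-\gamma-\alpha}_{p',\theta'}(D,T)}$ as long as $-\gamma-\alpha\ge\gamma_0$.

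The key step is the integration-by-parts identity
\begin{equation*}
\int_0^T (u(t,\cdot),\psi^{-\alpha/2}\phi(t,\cdot))_D\,dt=\int_0^T(f(t,\cdot),v(t,\cdot))_D\,dt,
\end{equation*}
obtained by pairing the equation for $u$ with $v$ in $x$ and integrating in $t$, with all boundary contributions vanishing by the homogeneous initial, terminal and exterior data. Combining this with the weighted duality of Lemma \ref{lem space}(iv) and Bochner duality produces
\begin{equation*}
\|\psi^{-\alpha/2}u\|_{\bH^{\gamma+\alpha}_{p,\theta}(D,T)}=\sup_{\phi}\frac{\bigl|\int_0^T(u,\psi^{-\alpha/2}\phi)_D\,dt\bigr|}{\|\phi\|_{\bH^{-\gamma-\alpha}_{p',\theta'}(D,T)}}\le C\|\psi^{\alpha/2}f\|_{\bH^{\gamma}_{p,\theta}(D,T)},
\end{equation*}
valid on the negative tail $\gamma\le-\alpha-\gamma_0$. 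The remaining piece $\|\psi^{\alpha/2}u_t\|_{\bH^\gamma_{p,\theta}}$ of the $\frH^{\gamma+\alpha}_{p,\theta}$-norm is then recovered from $u_t=\Delta^{\alpha/2}u+f$ and Lemma \ref{thm frac def}(iii) (recall $g=0$). The intermediate window $\gamma\in(-\alpha-\gamma_0,\gamma_0)$ is closed by complex interpolation along the scale $\{H^\gamma_{p,\theta}\}_\gamma$, whose scale property is a direct consequence of \eqref{def sobolev} and complex interpolation of $H_p^\gamma(\bR^d)$.

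Existence for general $\gamma$ follows by approximating $f\in\psi^{-\alpha/2}\bH^\gamma_{p,\theta}(D,T)$ by $f_n\in C_c^\infty((0,T)\times D)$ via Lemma \ref{lem space}(i), solving each approximate problem with the base case, and using the a priori estimate to extract a Cauchy sequence in $\frH^{\gamma+\alpha}_{p,\theta}(D,T)$; uniqueness is immediate by applying the same a priori estimate with $f=0$. The chief technical obstacle is rigorously justifying the integration-by-parts identity when $u$ and $f$ are merely distributions of possibly very negative order: the Lebesgue pairings must be interpreted through Lemma \ref{lem space}(iv), the exchange of $\partial_t$ with the distributional pairing must be arranged by approximation together with the commutator control of $\Delta^{\alpha/2}$ with the weight provided by Lemma \ref{lem perturb}, and one must verify that the vanishing exterior data genuinely kills the non-local boundary term in the definition \eqref{frac dist} of $\Delta^{\alpha/2}$.
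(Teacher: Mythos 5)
Your proposal is correct and follows essentially the same route as the paper: base case from \cite[Theorem 2.9]{Dirichlet} for $\gamma\ge 0$, a duality argument for $\gamma<-\alpha$ (the paper realizes your adjoint-problem pairing concretely as a Fubini identity for the kernel representation $u=\cT_Df$, $v=\cT_Dg_T$, which sidesteps the integration-by-parts justification you flag by working first with smooth $f$), complex interpolation for the intermediate window, recovery of the $u_t$-component from \eqref{ineq 1025-1}, and approximation for existence. The only point to tighten is uniqueness: the a priori estimate is derived for the constructed solution operator, so for an arbitrary solution with $f=0$ the paper approximates $u$ by smooth $u_n$ with $u_n=\cT_Df_n$, $f_n\to0$; alternatively your duality identity, once justified for arbitrary weak solutions (using the embedding $\frH^{\gamma_1}\subset\frH^{\gamma_2}$, $\gamma_1\ge\gamma_2$, to push into the negative tail), yields the same conclusion.
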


\begin{proof}
 \textbf{1}: Existence and estimate.
First, assume $f\in C_c^\infty((0,T)\times \cO)$. Recall $C_c^\infty((0,T)\times \cO) \subset \psi^{-\alpha/2}\bH_{p,\theta}^{\gamma'}(\cO,T)$ for any $\gamma'\in \bR$.  

Let $\gamma \geq 0$ and
$$
u:=\cT_{\cO} f:= \int_0^t \int_{\cO} p_{\cO}(t-s,x,y) f(s,y) dyds.
$$ 
Then, by \cite[Theorems 2.2 and 2.9]{Dirichlet}, $u$ is a solution to \eqref{eqn. zero para} such that $u\in e_{(0,T)\times \cO}\frH_{p,\theta}^{\gamma+\alpha}(\cO,T)$, and \eqref{ineq zero para} also holds.

Also, due to the existence result for $\gamma=0$ (recall $f\in C_c^\infty((0,T)\times \cO)$) and the embedding $\frH_{p,\theta}^{\alpha}(\cO,T) \subset \frH_{p,\theta}^{\gamma+\alpha}(\cO,T)$ if $\gamma<0$, we only need to prove estimate \eqref{ineq zero para} for $\gamma<0$.

Let $\gamma<0$.  For  $g\in C_c^\infty((0,T)\times \cO)$, put $v:=\cT_{\cO} g$. Then by Fubini's theorem,
\begin{align*}
&\int_0^T \int_{\cO} r_{(0,T)\times \cO}u(t,x)g(t,x)dxdt 
\\
&= \int_0^T\int_{\cO} \left( \int_0^s \int_{\cO} p_{\cO}(s-t,x,y) g(T-t,x) dxdt \right) f(T-s,y) dyds.
\end{align*}
Hence, if $\gamma<-\alpha$, then by Lemma \ref{lem space}$(iii)$ and  \eqref{ineq zero para} with $-\gamma-\alpha$ instead of $\gamma$,
\begin{align*}
&\left| \int_0^T \int_{\cO} r_{(0,T)\times \cO}u(t,x)g(t,x)dxdt \right| 
\\
&\leq N \|f\|_{\bH_{p,\theta+\alpha p/2}^\gamma(\cO,T)} \| r_{(0,T)\times \cO}\cT_{\cO} g_T\|_{\bH_{p',\theta'-\alpha p'/2}^{-\gamma}(\cO,T)}
\\
&\leq N \|\psi^{\alpha/2}f\|_{\bH_{p,\theta}^\gamma(\cO,T)} \|g\|_{\bH_{p',\theta'+\alpha p'/2}^{-\gamma-\alpha}(\cO,T)},
\end{align*}
where $g_T(t,x):=g(T-t,x)$, $1/p+1/p'=1$ and $\theta/p+\theta'/p' = d$.
Since $C_c^\infty((0,T)\times \cO)$ is dense in $\bH_{p',\theta'+\alpha p'/2}^{-\gamma-\alpha}(\cO,T)$ (see \cite[Remark 5.5]{Krylovhalf}), if $\gamma<-\alpha$, then
\begin{align} \label{ineq. 1008-1}
\|r_{(0,T)\times \cO}u\|_{\bH_{p,\theta-\alpha p/2}^{\gamma+\alpha}(\cO,T)} \leq N \|f\|_{\bH_{p,\theta+\alpha p/2}^\gamma(\cO,T)}.
\end{align}
Now we use \cite[Proposition 2.4]{Lototsky} and the complex interpolation of operators (see e.g. \cite[Theorem C.2.6]{Veraar}) to conclude that  \eqref{ineq. 1008-1} holds for all $\gamma \in \bR$. Therefore \eqref{ineq zero para} follows from \eqref{ineq. 1008-1} and \eqref{ineq 1025-1}.

For general $f\in \psi^{-\alpha/2}\bH_{p,\theta}^\gamma(\cO,T)$, we take a sequence $f_n\in C_c^\infty((0,T)\times \cO)$ such that $f_n\to f$ in $\psi^{-\alpha/2}\bH_{p,\theta}^\gamma(\cO,T)$. For each $n$, denote $u_n:=\cT_{\cO} f_n$. Then, 
\begin{eqnarray} \label{ineq. 1007-1} 
\|r_{(0,T)\times \cO}u_n\|_{\frH_{p,\theta}^{\gamma+\alpha}(\cO,T)} &\leq& N \|\psi^{\alpha/2}f_n\|_{\bH_{p,\theta}^\gamma(\cO,T)} 
\\
\|r_{(0,T)\times \cO}(u_n-u_m)\|_{\frH_{p,\theta}^{\gamma+\alpha}(\cO,T)} &\leq& N \|\psi^{\alpha/2}(f_n-f_m)\|_{\bH_{p,\theta}^\gamma(\cO,T)} \nonumber
\end{eqnarray}
Therefore, $r_{(0,T)\times \cO}u_n$ is Cauchy in $\frH_{p,\theta}^{\gamma+\alpha}(\cO,T)$, which converges to a function $r_{(0,T)\times \cO}u\in \frH_{p,\theta}^{\gamma+\alpha}(\cO,T)$.
We conclude that $u$ (extended to $\overline{\cO}^c$ by $0$) is a weak solution to \eqref{eqn. zero para}, and  \eqref{ineq zero para} also follows from the estimates of $u_n$. We remark that we actually have the representation $u=\cT_{\cO} f$.

\textbf{2}: Uniqueness. Let $u\in e_{(0,T)\times \cO}\frH_{p,\theta}^{\gamma+\alpha}(\cO,T)$ be a solution to \eqref{eqn. zero para} with $f=0$. We will show $u=0$. 
Following the argument in \cite[Remark 5.5]{Krylovhalf} (see also \cite[Remark 2.8]{Dirichlet}), we can take $u_n\in C_c^\infty([0,T]\times \cO)$ so that $u_n(0,\cdot)=0$ and $u_n\to r_{(0,T)\times \cO}u$ in $\frH_{p,\theta}^{\gamma+\alpha}(\cO,T)$.
Let $f_n:=\partial_t u_n - \Delta^{\alpha/2}u_n$. Then, by Lemma \ref{thm frac def}, $\Delta^{\alpha/2}(e_{(0,T)\times \cO}u_n)\to \Delta^{\alpha/2}u$ in $\psi^{-\alpha/2}\bH_{p,\theta}^\gamma(\cO,T)$ and thus
$$
f_n\to \partial_t u - \Delta^{\alpha/2}u =0
$$
as $n\to \infty$ in $\psi^{-\alpha/2}\bH_{p,\theta}^\gamma(\cO,T)$.
Due to \cite[Lemma 3.2$(ii)$]{Dirichlet}, we have $u_n=\cT_{\cO} f_n$,
and therefore $u_n$ satisfies \eqref{ineq. 1007-1}.  Thus we conclude  $u=0$. The lemma is proved.
\end{proof}

\begin{lem} \label{lem zero initial}
Let $p\in(1,\infty)$, $\gamma\in\bR$ and $\theta\in(d-1,d-1+p)$. Then, for any $u_0\in \psi^{\alpha/2-\alpha/p} B_{p,\theta}^{\gamma+\alpha-\alpha/p}(\cO)$, the equation
\begin{equation} \label{eqn. zero para ini}
\begin{cases}
\partial_t u = \Delta^{\alpha/2}u,\quad &(t,x)\in(0,T)\times \cO,
\\
u(0,x)=u_0,\quad & x\in \cO,
\\
u(t,x)=0,\quad &(t,x)\in(0,T)\times \overline{\cO}^c.
\end{cases}
\end{equation}
has a unique weak solution $u\in e_{(0,T)\times \cO}\frH_{p,\theta}^{\gamma+\alpha}(\cO,T)$, and for this solution we have
\begin{align} \label{ineq zero ini}
\|r_{(0,T)\times \cO}u\|_{\frH_{p,\theta}^{\gamma+\alpha}(\cO,T)} \leq N \|\psi^{-\alpha/2+\alpha/p}u_0\|_{B_{p,\theta}^{\gamma+\alpha-\alpha/p}(\cO)}.
\end{align}
\end{lem}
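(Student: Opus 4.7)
The strategy is to reduce the problem with nonzero initial data to the problem with zero initial data (already solved in Lemma \ref{lem zero para}) via a trace-type extension argument. Concretely, I would first construct an auxiliary function $v\in \frH_{p,\theta}^{\gamma+\alpha}(D,T)$ satisfying $v(0,\cdot)=u_0$ and
\begin{equation*}
\|v\|_{\frH_{p,\theta}^{\gamma+\alpha}(D,T)}\leq C\|\psi^{-\alpha/2+\alpha/p}u_0\|_{B_{p,\theta}^{\gamma+\alpha-\alpha/p}(D)}.
\end{equation*}
Then $w:=u-v$ should satisfy \eqref{eqn. zero para} with zero initial data and zero exterior data, and with forcing $f:=-\partial_t v+\Delta^{\alpha/2}v$. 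Because $v\in\frH_{p,\theta}^{\gamma+\alpha}(D,T)$, by the very definition of this space $\psi^{\alpha/2}\partial_t v\in \bH_{p,\theta}^{\gamma}(D,T)$, and by Lemma \ref{thm frac def}$(iii)$ (applied with $\lambda$ arbitrary and, since $v$ vanishes on $\overline{D}^c$, with trivial exterior part) we also have $\psi^{\alpha/2}\Delta^{\alpha/2}v\in\bH_{p,\theta}^{\gamma}(D,T)$, with both norms controlled by $\|v\|_{\frH}$. Thus $f\in\psi^{-\alpha/2}\bH_{p,\theta}^{\gamma}(D,T)$ with the same control, and Lemma \ref{lem zero para} produces a unique $w\in\frH_{p,\theta}^{\gamma+\alpha}(D,T)$ with $\|w\|_{\frH}\le C\|\psi^{\alpha/2}f\|_{\bH_{p,\theta}^{\gamma}}\le C\|u_0\|_{B}$. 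Setting $u:=v+w$ yields a weak solution satisfying \eqref{ineq zero ini}.

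The principal technical point is the construction of the extension $v$, i.e., the fact that $\psi^{\alpha/2-\alpha/p}B_{p,\theta}^{\gamma+\alpha-\alpha/p}(D)$ is precisely the trace space at $t=0$ of $\frH_{p,\theta}^{\gamma+\alpha}(D,T)$. This is exactly the role played by the Besov norm in the definition of $\frH_{p,\theta}^{\gamma+\alpha}(D,T)$, and the corresponding extension theorem is a standard ingredient in Krylov-type weighted Sobolev theory; see \cite[Remark 5.5]{Krylovhalf} for the half-space version and \cite[Remark 2.8]{Dirichlet} for the transfer to bounded $C^{1,1}$ open sets, on which the present space $\frH_{p,\theta}^{\gamma+\alpha}(D,T)$ is modelled. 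Concretely, writing $u_0=\psi^{\alpha/2-\alpha/p}\tilde u_0$ with $\tilde u_0\in B_{p,\theta}^{\gamma+\alpha-\alpha/p}(D)$, one can produce $\tilde v\in \bH_{p,\theta}^{\gamma+\alpha}(D,T)$ with $\partial_t\tilde v\in\bH_{p,\theta}^{\gamma}(D,T)$ and $\tilde v(0,\cdot)=\tilde u_0$ by a standard time-regularization (e.g.\ localizing and convolving in time with a Poisson-type kernel in the half-space model, then patching via the partition $\{\zeta_n\}$), and then set $v:=\psi^{\alpha/2-\alpha/p}\tilde v$. The required norm bound follows from Lemma \ref{lem space}$(iii)$.

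For uniqueness, suppose $u\in\frH_{p,\theta}^{\gamma+\alpha}(D,T)$ solves \eqref{eqn. zero para ini} with $u_0=0$. Then $u$ vanishes on $\overline{D}^c$, has zero initial trace, and satisfies $\partial_t u=\Delta^{\alpha/2}u$, so $u$ is a weak solution of \eqref{eqn. zero para} with $f=0$. The uniqueness part of Lemma \ref{lem zero para} therefore yields $u\equiv 0$, which finishes the proof. The only place where real work occurs is the construction of the extension $v$; every other step is either a direct invocation of Lemma \ref{lem zero para}, of Lemma \ref{thm frac def}$(iii)$, or of the definition of the space $\frH_{p,\theta}^{\gamma+\alpha}(D,T)$.
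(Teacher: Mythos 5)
Your overall architecture (extend $u_0$ to some $v\in\frH_{p,\theta}^{\gamma+\alpha}(D,T)$, subtract, and invoke Lemma \ref{lem zero para} for the resulting inhomogeneous problem with zero data; uniqueness from Lemma \ref{lem zero para}) is sound, and the reduction steps you describe — that $\psi^{\alpha/2}\partial_t v$ and, via Lemma \ref{thm frac def}$(iii)$, $\psi^{\alpha/2}\Delta^{\alpha/2}v$ both lie in $\bH_{p,\theta}^{\gamma}(D,T)$ with norms controlled by $\|v\|_{\frH_{p,\theta}^{\gamma+\alpha}(D,T)}$ — are all correct. The problem is that the step you yourself identify as ``the only place where real work occurs,'' namely the existence of an extension operator from $\psi^{\alpha/2-\alpha/p}B_{p,\theta}^{\gamma+\alpha-\alpha/p}(D)$ into $\frH_{p,\theta}^{\gamma+\alpha}(D,T)$, is not available in the sources you cite: \cite[Remark 5.5]{Krylovhalf} and \cite[Remark 2.8]{Dirichlet} are density statements, not surjectivity of the initial-trace map, and Krylov's trace theory in \cite{KrylovSome} is for $\alpha=2$. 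For $\gamma\geq 0$ such an extension does exist — but only because \cite[Theorem 2.9]{Dirichlet} already solves \eqref{eqn. zero para ini} in that range, so the solution itself is the extension; for $\gamma<0$, which is the only new content of this lemma, your extension theorem is essentially equivalent to the lemma itself, and the one-sentence sketch (``convolving in time with a Poisson-type kernel, then patching via $\{\zeta_n\}$'') leaves all of the weighted, negative-order verification undone. As written, the argument is circular-adjacent for $\gamma<0$.

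For comparison, the paper avoids the trace/extension question entirely: for $-1\leq\gamma<0$ it uses the representation of Lemma \ref{negative rep} to write $u_0=u_0^0+\sum_i D_i(\psi u_0^i)$ with each $u_0^i$ in the Besov space of order $\gamma+1+\alpha-\alpha/p\geq\alpha-\alpha/p$, solves the homogeneous initial-value problem for each $u_0^i$ by the known case $\gamma\geq0$, forms $v:=v_0+\sum_i D_i(\psi v_i)$, and then corrects the commutator error $h=\sum_i D_i(\psi\Delta^{\alpha/2}v_i)-\sum_i\Delta^{\alpha/2}(D_i\psi v_i)$ by Lemma \ref{lem zero para}; lower $\gamma$ is handled by iteration. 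If you want to keep your route, the cleanest fix is to prove your extension theorem by exactly this device: lift $u_0$ to higher regularity via $\Lambda_0,\Lambda_i$, extend the regular pieces (where the $\gamma\geq0$ theory applies), and recombine — at which point the two proofs coincide in substance.
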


\begin{proof}
If $\gamma\geq0$, then all the claims follow from \cite[Theorem 2.9]{Dirichlet}. Also, for any $\gamma\in \bR$, the uniqueness  follows from Lemma \ref{lem zero para}. Thus, we only need to prove the existence result and estimate \eqref{ineq zero ini} when $\gamma<0$.

Let $-1\leq\gamma<0$. Then, by Lemma \ref{negative rep}, there are $u_0^i\in \psi^{\alpha/2-\alpha/p} B_{p,\theta}^{\gamma+1+\alpha-\alpha/p}(\cO)$ $(i=0,1,\dots,d)$ such that $u_0=u_{0}^0+ \sum_{i=1}^d D_i \left(\psi u_{0}^i\right)$
and
\begin{align} \label{ineq 1025-8}
\|\psi^{-\alpha/2+\alpha/p} u_0\|_{B_{p,\theta}^{\gamma+\alpha-\alpha/p}(\cO)} \approx \sum_{i=0}^d\|\psi^{-\alpha/2+\alpha/p} u_0^i\|_{B_{p,\theta}^{\gamma+1+\alpha-\alpha/p}(\cO)}.
\end{align}
Since $\gamma+1\geq0$, by the result for $\gamma\geq0$, for each $i$ there exists a unique solution $v_i$ to \eqref{eqn. zero para ini} with $u_0^i$ in place of $u_0$. Moreover, for each $i$,
\begin{align*} 
\|r_{(0,T)\times \cO}v_i\|_{\frH_{p,\theta}^{\gamma+1+\alpha}(\cO,T)} \leq N \|\psi^{-\alpha/2+\alpha/p}u_0^i\|_{B_{p,\theta}^{\gamma+1+\alpha-\alpha/p}(\cO)}.
\end{align*}
By Lemma \ref{lem deriv}, if we denote $v:=v_0+ \sum_{i=1}^d D_i \left(\psi v_i\right)$, then $v \in e_{(0,T)\times \cO}\frH_{p,\theta}^{\gamma+\alpha}(\cO,T)$ and $v$ is a solution to
\begin{equation*}
\begin{cases}
\partial_t v = \Delta^{\alpha/2}v+ h,\quad &(t,x)\in(0,T)\times \cO,
\\
v(0,x)=u_0,\quad & x\in \cO,
\\
v(t,x)=0,\quad &(t,x)\in(0,T)\times \overline{\cO}^c,
\end{cases}
\end{equation*}
where $h=\sum_{i=1}^d D_i\left(\psi\Delta^{\alpha/2}v_i\right) - \sum_{i=1}^d \Delta^{\alpha/2}\left(D_i\psi v_i\right)$ in $(0,T)\times\cO$.
Here, by Lemmas \ref{thm frac def} and  \ref{lem deriv},  we have $h\in \psi^{-\alpha/2}\bH_{p,\theta}^{\gamma}(\cO,T)$. Thus, by Lemma \ref{lem zero para}, there exists a unique solution $w$ to
\begin{equation*}
\begin{cases}
\partial_t w = \Delta^{\alpha/2}w + h,\quad &(t,x)\in(0,T)\times \cO,
\\
w(0,x)=0,\quad & x\in \cO,
\\
w(t,x)=0,\quad &(t,x)\in(0,T)\times \overline{\cO}^c.
\end{cases}
\end{equation*}
Also, $w$ satisfies 
\begin{align} \label{ineq 1025-9}
\|r_{(0,T)\times \cO}w\|_{\frH_{p,\theta}^{\gamma+\alpha}(\cO,T)} &\leq N \|\psi^{\alpha/2} h\|_{\bH_{p,\theta}^\gamma(\cO,T)} \nonumber
\\
&\leq N \sum_{i=1}^d \|\psi^{-\alpha/2} r_{(0,T)\times \cO}v_i \|_{\bH_{p,\theta}^{\gamma+1+\alpha}(\cO,T)}.
\end{align}
Therefore,  $u:=v-w$ becomes a solution to \eqref{eqn. zero para ini}, and \eqref{ineq zero ini} also follows from \eqref{ineq 1025-8}--\eqref{ineq 1025-9}.

Repeating the above argument, one can treat the case $-(n+1)\leq \gamma<-n$ for $n=1,2,\cdots$ in order, and prove the lemma for all $\gamma<0$.  The lemma is proved.
\end{proof}

\begin{lem} \label{lem zero ellip}
Let $p\in(1,\infty)$, $\gamma\in\bR$ and $\theta\in(d-1,d-1+p)$. Then, for any $f\in \psi^{-\alpha/2}H_{p,\theta}^\gamma(\cO)$, the equation
\begin{equation} \label{eqn. zero ellip}
\begin{cases}
\Delta^{\alpha/2}u = f,\quad &x \in \cO,
\\
u(x)=0,\quad &x \in \overline{\cO}^c.
\end{cases}
\end{equation}
has a unique solution $u \in e_{\cO}H_{p,\theta-\alpha p/2}^{\gamma+\alpha}(\cO)$, and for this solution we have
\begin{align} \label{ineq zero ellip}
\|\psi^{-\alpha/2} r_{\cO}u \|_{H_{p,\theta}^{\gamma+\alpha}(\cO)} \leq N \|\psi^{\alpha/2} f\|_{H_{p,\theta}^\gamma(\cO)}.
\end{align}
\end{lem}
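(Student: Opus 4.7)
The proof parallels Lemma \ref{lem zero para}, with the stationary Green operator
$$
\cG_D f(x) := \int_D G_D(x,y) f(y)\, dy
$$
playing the role of $\cT_D$. For $\gamma \geq 0$, existence of $u := \cG_D f \in \psi^{\alpha/2} H^{\gamma+\alpha}_{p,\theta}(D)$ together with \eqref{ineq zero ellip} is \cite[Theorem 2.10]{Dirichlet}. The plan is therefore to establish \eqref{ineq zero ellip} for $\gamma < 0$ and $f \in C^{\infty}_c(D)$ by a duality argument based on the symmetry of the Green function, fill the intermediate range by interpolation, and then conclude by density and an approximation argument for uniqueness.

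The crucial observation is that the rotationally symmetric stable process yields a symmetric Green function, $G_D(x,y) = G_D(y,x)$, so that Fubini gives $(\cG_D f, g)_D = (f, \cG_D g)_D$ for $f,g \in C^{\infty}_c(D)$. Let $p',\theta'$ be the duals of Lemma \ref{lem space}(iv), i.e. $1/p+1/p'=1$ and $\theta/p+\theta'/p'=d$; a short calculation shows $\theta' \in (d-1,d-1+p')$ whenever $\theta \in (d-1,d-1+p)$. Assume first $\gamma \leq -\alpha$, so that $-\gamma-\alpha \geq 0$, and apply the already known estimate with $(-\gamma-\alpha,p',\theta')$ in place of $(\gamma,p,\theta)$ to $v := \cG_D g$:
$$
\|\psi^{-\alpha/2} v\|_{H^{-\gamma}_{p',\theta'}(D)} \leq C \|\psi^{\alpha/2} g\|_{H^{-\gamma-\alpha}_{p',\theta'}(D)}.
$$
Combining this with Lemma \ref{lem space}(iii) (to absorb the $\psi^{\pm\alpha/2}$ factors into weight shifts) and the dualities of Lemma \ref{lem space}(iv) between $H^{\gamma}_{p,\theta+\alpha p/2}(D)$ and $H^{-\gamma}_{p',\theta'-\alpha p'/2}(D)$, and between $H^{\gamma+\alpha}_{p,\theta-\alpha p/2}(D)$ and $H^{-\gamma-\alpha}_{p',\theta'+\alpha p'/2}(D)$, one obtains
$$
|(u,g)_D| = |(f,v)_D| \leq C \|\psi^{\alpha/2} f\|_{H^{\gamma}_{p,\theta}(D)} \|g\|_{H^{-\gamma-\alpha}_{p',\theta'+\alpha p'/2}(D)}.
$$
Taking the supremum over $g \in C^{\infty}_c(D)$, which is dense in $H^{-\gamma-\alpha}_{p',\theta'+\alpha p'/2}(D)$ by Lemma \ref{lem space}(i), yields \eqref{ineq zero ellip} for all $\gamma \leq -\alpha$.

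The remaining interval $-\alpha < \gamma < 0$ is filled by complex interpolation of the linear map $f \mapsto \cG_D f$ between endpoints $\gamma=0$ and $\gamma=-\alpha-1$, using the interpolation structure of the weighted Sobolev scale $H^{\cdot}_{p,\theta}(D)$ recorded in \cite[Proposition 2.4]{Lototsky}, exactly as in the parabolic case. The extension from smooth $f$ to arbitrary $f \in \psi^{-\alpha/2} H^{\gamma}_{p,\theta}(D)$ follows by approximation: take $f_n \in C^{\infty}_c(D)$ with $f_n \to f$; by \eqref{ineq zero ellip} the sequence $u_n := \cG_D f_n$ is Cauchy in $\psi^{\alpha/2} H^{\gamma+\alpha}_{p,\theta}(D)$, and Lemma \ref{thm frac def} allows passage to the limit in the distributional identity $(\Delta^{\alpha/2} u_n,\phi)_D = (f_n,\phi)_D$ for every $\phi \in C^{\infty}_c(D)$, so the limit $u$ is the desired weak solution. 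Uniqueness follows the scheme of Lemma \ref{lem zero para}: for a solution $u$ with $f=0$, approximate by $u_n \in C^{\infty}_c(D)$ in $\psi^{\alpha/2} H^{\gamma+\alpha}_{p,\theta}(D)$, observe $\tilde f_n := \Delta^{\alpha/2} u_n \to 0$ in $\psi^{-\alpha/2} H^{\gamma}_{p,\theta}(D)$ by Lemma \ref{thm frac def}, and use the identity $u_n = \cG_D \tilde f_n$ (valid for compactly supported smooth functions, as in \cite[Lemma 3.2]{Dirichlet}) together with the already established \eqref{ineq zero ellip} to conclude $u=0$. The technically subtlest step is the complex interpolation at negative regularity indices, but this is precisely the device already used and justified in the proof of Lemma \ref{lem zero para}, so the same justification transfers verbatim.
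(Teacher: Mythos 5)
Your proposal is correct and follows essentially the same route as the paper: the $\gamma\ge 0$ case is quoted from \cite[Theorem 2.10]{Dirichlet}, the range $\gamma\le-\alpha$ is handled by duality against the adjoint problem (your Green-function symmetry $(\cG_Df,g)_D=(f,\cG_Dg)_D$ is just the paper's identity $(u,g)_D=(\Delta^{\alpha/2}u,v)_D$ in different clothing), the gap $-\alpha<\gamma<0$ is closed by complex interpolation, and existence/uniqueness follow by density. The only cosmetic difference is that the paper states the key a priori bound $\|\psi^{-\alpha/2}u\|_{H^{\gamma+\alpha}_{p,\theta}(D)}\le C\|\psi^{\alpha/2}\Delta^{\alpha/2}u\|_{H^{\gamma}_{p,\theta}(D)}$ for arbitrary $u$ vanishing on $\overline{D}^c$, which yields uniqueness immediately, whereas you recover it through the representation $u_n=\cG_D\tilde f_n$ as in the parabolic case.
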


\begin{proof}
We first prove the following:  if  $u \in e_{\cO}H_{p,\theta-\alpha p/2}^{\gamma+\alpha}(\cO)$, then
\begin{align} \label{ineq. 1008}
\|\psi^{-\alpha/2} r_{\cO}u\|_{H_{p,\theta}^{\gamma+\alpha}(\cO)} \leq N \|\psi^{\alpha/2} \Delta^{\alpha/2} u\|_{H_{p,\theta}^{\gamma}(\cO)}.
\end{align}
 We only prove the result when $\gamma<0$, because this inequality follows from \cite[Theorem 2.10]{Dirichlet} if $\gamma\geq0$.  We note that  it is enough to treat the case $\gamma<-\alpha$. Indeed, once \eqref{ineq. 1008} is proved for $\gamma<-\alpha$, then we can use the complex interpolation of operators to prove \eqref{ineq. 1008}  for all $\gamma \in \bR$.

Now we assume $\gamma<-\alpha$. By Lemma \ref{lem space}$(i)$, it suffices to prove \eqref{ineq. 1008} for $u\in C_c^\infty(\cO)$. Let $g\in C_c^\infty(\cO)$. Since $-\gamma>\alpha$, one can find a solution $v\in e_{\cO}H_{p',\theta'-\alpha p'/2}^{-\gamma}(\cO)$ to \eqref{eqn. zero ellip} with $g$ in place of $f$. Thus, by Lemma \ref{lem space}$(iv)$,
\begin{align*}
|(r_{\cO}u,g)_{\cO}| &= |(\Delta^{\alpha/2}u,r_{\cO}v)_{\cO}| \leq N \|\psi^{\alpha/2}\Delta^{\alpha/2} u\|_{H_{p,\theta}^\gamma(\cO)} \|\psi^{-\alpha/2} r_{\cO}v\|_{H_{p',\theta'}^{-\gamma}(\cO)}
\\
&\leq N \|\psi^{\alpha/2}\Delta^{\alpha/2} u\|_{H_{p,\theta}^\gamma(\cO)} \|\psi^{\alpha/2} g\|_{H_{p',\theta'}^{-\gamma-\alpha}(\cO)}.
\end{align*}
Here, $p'$ and  $\theta'$ are defined as in the proof of Lemma \ref{lem zero para}, and for the last inequality we used \eqref{ineq zero ellip} with $g$ and $-\gamma-\alpha$ instead of $f$ and $\gamma$, respectively. Therefore, we have \eqref{ineq. 1008} for $\gamma<-\alpha$ due to Lemma \ref{lem space}$(iii)$.

Now we are ready to prove our main statement. The uniqueness and \eqref{ineq zero ellip} easily follow from  \eqref{ineq. 1008}. Thus, it only remains to prove the existence.
Since the case $\gamma\geq0$ is treated in \cite[Theorem 2.10]{Dirichlet}, we assume $\gamma<0$.
Let $f\in C_c^\infty(\cO)$. Then, we have a solution $u \in e_{\cO}H_{p,\theta-\alpha p/2}^{\alpha}(\cO)$ to \eqref{eqn. zero ellip}. Here, due to $\psi^{\alpha/2}H_{p,\theta}^{\alpha}(\cO) \subset \psi^{\alpha/2}H_{p,\theta}^{\gamma+\alpha}(\cO)$, we can apply \eqref{ineq. 1008} to get \eqref{ineq zero ellip}. For general data $f$,  we use the approximation argument used in the proof of Lemma \ref{lem zero para}.  The lemma is proved.
\end{proof}

Now we consider
\begin{equation} \label{parabolic sec 4}
\begin{cases}
\partial_t u(t,x)=\Delta^{\alpha/2}u(t,x),\quad &(t,x)\in(0,T)\times \cO,
\\
u(0,x)=0,\quad & x\in \cO,
\\
u(t,x)=g(t,x),\quad &(t,x)\in (0,T)\times \overline{\cO}^c,
\end{cases}
\end{equation}
and
\begin{equation} \label{elliptic sec 4}
\begin{cases}
\Delta^{\alpha/2}u(x)=0,\quad &x\in \cO,\\
u(x)=g(x),\quad & x\in \overline{\cO}^c.
\end{cases}
\end{equation}

\begin{lem}[Higher regularity for parabolic equation] \label{higher para}
 Let $0\leq \mu$, $\theta\in (d-1-\frac{\alpha p}{2},d-1+p+\frac{\alpha p}{2})$ and $g\in \psi^{\alpha/2}\bL_{p,\theta,\sigma}(\overline{\cO}^c,T)$.  Suppose $\sigma>-\theta-\alpha p/2$ if $\cO$ is bounded, and $\sigma=0$ if $\cO$ is a half space. Then, for any solution $u \in e_{(0,T)\times\cO}\frH_{p,\theta}^\mu(\cO,T) \oplus e_{(0,T)\times \overline{\cO}^c}\bL_{p,\theta-\alpha p/2,\sigma}(\overline{\cO}^c,T)$ to \eqref{parabolic sec 4}, we have $u\in e_{(0,T)\times \cO}\frH_{p,\theta}^\gamma(\cO,T)$ for any $\gamma\geq 0$, and moreover
\begin{align} \label{ineq. 08.01-8}
&\|\psi^{-\alpha/2}r_{(0,T)\times \cO}u\|_{\bH_{p,\theta}^{\gamma}(\cO,T)} \nonumber
\\
&\leq N \|\psi^{-\alpha/2}r_{(0,T)\times \cO}u\|_{\bH_{p,\theta}^{\mu}(\cO,T)} + N \|\psi^{-\alpha/2}g\|_{\bL_{p,\theta,\sigma}(\overline{\cO}^c,T)},
\end{align}
where $N=N(d,p,\alpha,\gamma,\mu, \theta,\sigma,\cO)$.
\end{lem}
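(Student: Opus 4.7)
\medskip

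\noindent\textbf{Proof plan.} The strategy is a bootstrap argument via the partition of unity $\{\zeta_n\}$ that defines the spaces $H^\gamma_{p,\theta}(D)$, using Lemma \ref{lem zero para} (zero exterior condition) on each localized piece and Lemma \ref{lem perturb} to control the resulting commutator terms summed over $n$.

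\medskip

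\noindent\textbf{Step 1: Localization.} For each $n \in \bZ$, set $w_n(t,x) := \zeta_{-n}(x) u(t,x)$. Since $\zeta_{-n} \in C_c^\infty(D)$, the function $w_n$ is supported in $D$, so $w_n(t,\cdot) = 0$ on $\overline{D}^c$, and $w_n(0,\cdot) = 0$ by the initial condition. Using $\partial_t u = \Delta^{\alpha/2}u$ on $D$ (which holds in the weak sense on the class $u \in \frH^\mu_{p,\theta}(D,T) \cap \psi^{\alpha/2}\bL_{p,\theta,\sigma}(\overline{D}^c,T)$ by Definition \ref{sol def} and Lemma \ref{thm frac def}), one obtains
\begin{equation*}
\partial_t w_n = \Delta^{\alpha/2} w_n + h_n, \qquad h_n := \zeta_{-n}\Delta^{\alpha/2} u - \Delta^{\alpha/2}(\zeta_{-n} u),
\end{equation*}
so that $w_n$ solves the zero-exterior/zero-initial parabolic problem \eqref{eqn. zero para} with forcing $h_n$.

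\medskip

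\noindent\textbf{Step 2: Solve the localized problem and sum.} The plan is to apply Lemma \ref{lem zero para} to each $w_n$ (or, equivalently, to the scaled pieces $\zeta_{-n}(e^n\cdot)u(t,e^n\cdot)$, which is the natural object in the definition \eqref{def sobolev} of $H^{\gamma+\alpha}_{p,\theta}(D)$), obtaining an $H^{\gamma'+\alpha}_p$-bound for the localized piece in terms of the $H^{\gamma'}_p$-norm of the corresponding scaled commutator. Summing these estimates with the weights $e^{n\theta}$ as in \eqref{def sobolev}, invoking the fact that $\sum_n \zeta_{-n} \approx 1$ on $D$, and using Lemma \ref{lem perturb} to control the sum of scaled commutators gives
\begin{equation*}
\|\psi^{-\alpha/2}u\|_{\bH^{\gamma'+\alpha}_{p,\theta}(D,T)} \le C\bigl(\|\psi^{-\alpha/2}u\|_{\bH^{0\vee(\gamma'+\alpha/2)}_{p,\theta}(D,T)} + \|\psi^{-\alpha/2}g\|_{\bL_{p,\theta,\sigma}(\overline{D}^c,T)}\bigr)
\end{equation*}
for any $\gamma' \in \bR$. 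Thus one gains $\alpha/2$ in regularity at each step.

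\medskip

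\noindent\textbf{Step 3: Iterate.} Starting from $u \in \frH^\mu_{p,\theta}(D,T)$, apply the Step 2 estimate with $\gamma' = \mu - \alpha/2$; since $\mu \ge 0$, the right-hand side is finite and yields $u \in \frH^{\mu + \alpha/2}_{p,\theta}(D,T)$. Iterating, after $k$ steps one obtains $u \in \frH^{\mu + k\alpha/2}_{p,\theta}(D,T)$. Choosing $k$ with $\mu + k\alpha/2 \ge \gamma$ and chaining the estimates yields \eqref{ineq. 08.01-8} (the $u_t$-term in the $\frH^{\gamma}$-norm is then controlled by Lemma \ref{thm frac def}$(iii)$, since $u_t = \Delta^{\alpha/2}u$).

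\medskip

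\noindent\textbf{Main obstacle.} The delicate point is the rigorous passage from the per-piece estimate from Lemma \ref{lem zero para} for $w_n$ to the global sum that reproduces the $\frH^{\gamma+\alpha}_{p,\theta}(D,T)$-norm of $u$. One must match the scaling convention used in the definition \eqref{def sobolev} of $H^\gamma_{p,\theta}(D)$ with the scaled commutator sum in Lemma \ref{lem perturb}; in particular, the $\zeta_{-n}$ in the localization and the (possibly different) cutoffs used to define $\|\cdot\|_{H^{\gamma+\alpha}_{p,\theta}(D)}$ must be reconciled, using a second family of cutoffs $\eta_{-n}$ with $\eta_{-n}\zeta_{-n}=\zeta_{-n}$ and the fact that at each scale only finitely many indices overlap (as in the proof of Lemma \ref{negative rep}). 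A standard approximation argument (first assume $g \in C_c^\infty((0,T)\times\overline{D}^c)$ and $f = 0$ with smooth $u$, then pass to the limit using the a priori estimate) handles the distributional subtleties in deriving and using $\partial_t w_n = \Delta^{\alpha/2}w_n + h_n$.
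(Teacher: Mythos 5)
Your overall architecture (localize with $\zeta_{-n}$, write the localized function as a solution with a commutator forcing term, control the commutators by Lemma \ref{lem perturb}, and bootstrap in increments of $\alpha/2$) matches the paper's proof. The gap is in the per-piece estimate of Step 2: you propose to apply Lemma \ref{lem zero para} to $w_n=\zeta_{-n}u$, i.e.\ the \emph{weighted Dirichlet} solvability result on $D$. That lemma bounds $\|w_n\|_{\frH^{\gamma'+\alpha}_{p,\theta}(D,T)}$ by the \emph{weighted} norm $\|\psi^{\alpha/2}h_n\|_{\bH^{\gamma'}_{p,\theta}(D,T)}$ of the commutator. But $h_n=\zeta_{-n}\Delta^{\alpha/2}u-\Delta^{\alpha/2}(\zeta_{-n}u)$ is \emph{not} supported near scale $e^{-n}$ (the nonlocal tail of $\Delta^{\alpha/2}(\zeta_{-n}u)$ spreads over all scales), so each $\|\psi^{\alpha/2}h_n\|_{\bH^{\gamma'}_{p,\theta}(D,T)}$ is itself a sum over all dyadic layers, and summing these over $n$ produces a double sum requiring off-diagonal decay that neither Lemma \ref{lem perturb} nor anything else in the paper supplies. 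Lemma \ref{lem perturb} controls precisely the single sum $\sum_n e^{n(\theta-\alpha p/2)}\|F_n\|^p_{H^{\gamma-\alpha}_p}$ of \emph{unweighted} $H_p$ norms of the \emph{scaled} commutators, which is matched to a different per-piece input.

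What the paper actually does: it applies the parabolic scaling $u_n(t,x):=u(e^{n\alpha}t,e^nx)$ (note the time rescaling, which you omit and which is needed for the constants to be uniform in $n$), observes that $v:=u_n(\cdot,\cdot)\zeta_{-n}(e^n\cdot)$ solves the Cauchy problem $\partial_t v=\Delta^{\alpha/2}v+F_n$ \emph{on all of $\bR^d$} with zero initial data (both sides of the localized identity vanish off the support of $\zeta_{-n}(e^n\cdot)$), and then invokes the whole-space result \cite[Theorem 1]{Mikul Cauchy} to get $\|\Delta^{\alpha/2}v\|_{\bH^{\gamma-\alpha}_p}\leq C\|F_n\|_{\bH^{\gamma-\alpha}_p}$ in the unweighted scale. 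Summing with the weights $e^{n(\theta-\alpha p/2)}$, applying Lemma \ref{lem perturb}, and using the equivalence $\|v\|_{H^\gamma_p}\approx\|v\|_{H^{\gamma-\alpha}_p}+\|\Delta^{\alpha/2}v\|_{H^{\gamma-\alpha}_p}$ then reassembles the weighted norm of $u$. Your parenthetical remark about the scaled pieces shows you sense the right object, but the tool you name (Lemma \ref{lem zero para}) does not deliver an unweighted $H_p$ per-piece bound; replacing it by the whole-space Cauchy estimate, together with the parabolic time scaling, is the missing ingredient.
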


\begin{proof}
$(i)$ We first note that it suffices to consider the case $\gamma=\mu+\alpha/2$.  Indeed, if the claim holds for  $\gamma=\mu+\alpha/2$, then  repeating the result with $\mu'=\mu+\alpha/2, \mu+2\alpha/2, \cdots$ in order, we prove the lemma for $\gamma=\mu+k \alpha/2$, $k\in \bN$. This certainly proves the lemma.

Let $\gamma=\mu+\alpha/2$. For each $n\in\bZ$, denote $u_n(t,x):=u(e^{n\alpha}t, e^n x)$ and $g_n(t,x) := g(e^{n\alpha}t,e^n x)$.
Then, $u_n(t,x)\zeta_{-n}(e^nx)\in \bH_p^\mu(e^{-n\alpha}T)$ is a weak solution to the equation
$$
\begin{cases}
\frac{\partial v}{\partial t}=\Delta^{\alpha/2} v+F_n,   \quad &: (t,x)\in  (0,e^{-n\alpha}T)\times \bR^d
\\
v(0,\cdot)=0 \quad &:  x\in \bR^d
\end{cases}
$$
where $F_n(t,x)= -\Delta^{\alpha/2}(u_n(\cdot,\cdot)\zeta_{-n}(e^n\cdot))(t,x) + \zeta_{-n}(e^nx) \Delta^{\alpha/2}u_n(t,x)$.
By Lemma \ref{lem perturb},  we have (note $\gamma-\alpha=\mu-\alpha/2$)
\begin{align} \label{ineq. 08.01-5}
&\sum_{n\in\bZ}e^{n(\theta-\alpha p/2)}\|F_n(e^{-n\alpha}t,\cdot)\|_{H_p^{\gamma-\alpha}}^p = \sum_{n\in\bZ}e^{n(\theta-\alpha p/2)}\|F_n(e^{-n\alpha}t,\cdot)\|_{H_p^{\mu-\alpha/2}}^p \nonumber
    \\
    &\leq N \|\psi^{-\alpha/2}r_{(0,T)\times \cO}u(t,\cdot)\|_{H_{p,\theta}^{\mu}(\cO)}^p 
     + N \|\psi^{-\alpha/2}g(t,\cdot)\|_{L_{p,\theta,\sigma}(\overline{\cO}^c)}^p. 
     \end{align}
Thus, we have $F_n\in \bH_p^{\gamma-\alpha}(e^{-n\alpha}T):=L_p((0,e^{-n\alpha}T);H_p^{\gamma-\alpha})$. Now we can apply \cite[Theorem 1]{Mikul Cauchy} to conclude $u_n(\cdot,\cdot)\zeta_{-n}(e^n\cdot)\in \bH_p^{\gamma}(e^{-n\alpha}T)$ and
\begin{align} 
\|\Delta^{\alpha/2}(u(\cdot,e^n\cdot)\zeta_{-n}(e^n\cdot))\|_{\bH_p^{\gamma-\alpha}(T)}^p &=e^{n\alpha}\|\Delta^{\alpha/2}(u_n(\cdot,\cdot)\zeta_{-n}(e^n\cdot))\|_{\bH_p^{\gamma-\alpha}(e^{-n\alpha}T)}^p   \nonumber
\\
&\leq N e^{n\alpha}\|F_{n}(\cdot,\cdot)\|_{\bH_p^{\gamma-\alpha}( e^{-n\alpha}T)}^p \nonumber
\\
&= N \|F_{n}(e^{-n\alpha}\cdot,\cdot)\|_{\bH_p^{\gamma-\alpha}(T)}^p.  \label{eqn 3.19.6}
\end{align}
By the relation
\begin{equation}
\label{frac rel}
\|v\|_{H_p^\gamma} \approx \left(\|v\|_{H_p^{\gamma-\alpha}}+\|\Delta^{\alpha/2}v\|_{H_p^{\gamma-\alpha}}\right),
\end{equation}
for any  $w\in H^{\gamma}_{p,\theta-\alpha p/2}(\cO)$ we have
\begin{align*}
&\|w\|^p_{H^{\gamma}_{p,\theta-\alpha p/2}(\cO)}  \nonumber 
\\
& \leq N \sum_n e^{n(\theta-\alpha p/2)} \left(\|w(e^n\cdot)\zeta_{-n}(e^n\cdot)\|^p_{H^{\mu}_p}+ \|\Delta^{\alpha/2}(w(e^n\cdot)\zeta_{-n}(e^n\cdot))\|^p_{H_p^{\gamma-\alpha}}\right) \nonumber 
\\
&\leq N \|w\|^p_{H^{\mu}_{p,\theta-\alpha p/2}(\cO)}+ N \sum_n e^{n(\theta-\alpha p/2)}  \|\Delta^{\alpha/2}(w(e^n\cdot)\zeta_{-n}(e^n\cdot))\|^p_{H_p^{\gamma-\alpha}}. 
\end{align*}
This together with \eqref{ineq. 08.01-5} and \eqref{eqn 3.19.6} easily yields \eqref{ineq. 08.01-8}. The lemma is proved.

\end{proof}

\begin{lem}[Higher regularity for elliptic equation] \label{higher ellip}
Let $0\leq \mu$,  and $\theta\in (d-1-\frac{\alpha p}{2},d-1+p+\frac{\alpha p}{2})$ and $g\in \psi^{\alpha/2}L_{p,\theta,\sigma}(\overline{\cO}^c)$.  Let $\sigma>-\theta-\alpha p/2$ if $\cO$ is bounded, and $\sigma=0$ if $\cO$ is a half  space. 
 Then, for a solution $u\in e_{\cO}H^{\mu}_{p,\theta-\alpha p/2}(\cO)\oplus e_{\overline{\cO}^c}L_{p,\theta-\alpha p/2,\sigma}(\overline{\cO}^c)$ to \eqref{elliptic sec 4}, we have $u\in e_{\cO}H_{p,\theta-\alpha p/2}^{\gamma}(\cO)$ for any $\gamma \in \bR$, and moreover
\begin{align} \label{ineq. 08.01-9}
&\|\psi^{-\alpha/2}r_{\cO}u\|_{H_{p,\theta}^{\gamma}(\cO)} \leq N \|\psi^{-\alpha/2}g\|_{L_{p,\theta,\sigma}(\overline{\cO}^c)} + N \|\psi^{-\alpha/2}r_{\cO}u\|_{H_{p,\theta}^{\mu}(\cO)},
\end{align}
where $N=N(d,p,\alpha,\gamma,\mu, \theta,\sigma,\cO)$. 
\end{lem}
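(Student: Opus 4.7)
The plan is to mimic Lemma~\ref{higher para} step by step, since the elliptic statement is its time-independent analogue. First I would observe that it suffices to establish \eqref{ineq. 08.01-9} in the single case $\gamma=\mu+\alpha/2$: iterating this gain of $\alpha/2$ derivatives with $\mu \mapsto \mu+\alpha/2,\ \mu+\alpha,\dots$ then delivers the estimate for all $\gamma\geq\mu$, while for $\gamma<\mu$ the embedding $H^{\mu}_{p,\theta}(D)\hookrightarrow H^{\gamma}_{p,\theta}(D)$ is immediate.

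For the base case I would rescale by setting $u_n(x):=u(e^n x)$ and localize via $v_n(x):=\zeta_{-n}(e^n x)\,u_n(x)$, which is compactly supported in the rescaled domain $e^{-n}D$. Writing the natural commutator
\[
F_n := \Delta^{\alpha/2}\bigl(u_n\,\zeta_{-n}(e^n\cdot)\bigr) - \zeta_{-n}(e^n\cdot)\,\Delta^{\alpha/2} u_n,
\]
the key observation is that the subtracted term is the zero distribution on $\bR^d$: indeed $\Delta^{\alpha/2}u=0$ on $D$, and the scaling together with the support property $\operatorname{supp}\zeta_{-n}(e^n\cdot)\subset e^{-n}D$ show that pairing $\zeta_{-n}(e^n\cdot)\Delta^{\alpha/2}u_n$ against any $\phi\in C_c^\infty(\bR^d)$ produces a test function supported in $e^{-n}D$, on which the equation forces zero. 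Consequently $\Delta^{\alpha/2}v_n=F_n$ as a distribution on $\bR^d$.

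Next I would apply the $\bR^d$-level equivalence \eqref{frac rel} to the compactly supported $v_n$ to obtain
\[
\|v_n\|_{H_p^{\gamma}} \leq C\bigl(\|v_n\|_{H_p^{\gamma-\alpha}}+\|F_n\|_{H_p^{\gamma-\alpha}}\bigr),
\]
raise to the $p$-th power, multiply by $e^{n(\theta-\alpha p/2)}$, and sum over $n\in\bZ$. The summed left-hand side is comparable to $\|\psi^{-\alpha/2}u\|_{H_{p,\theta}^{\gamma}(D)}^p$ by the very definition \eqref{def sobolev} of the weighted Sobolev norm. The sum of $\|v_n\|_{H_p^{\gamma-\alpha}}^p$ terms is bounded by $C\|\psi^{-\alpha/2}u\|_{H_{p,\theta}^{\mu}(D)}^p$ since $\gamma-\alpha=\mu-\alpha/2\leq\mu$. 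The $F_n$-sum is precisely the object controlled by Lemma~\ref{lem perturb} with its parameter equal to $\mu-\alpha/2\geq-\alpha/2$ and $\lambda=0$; because $u=g$ on $\overline{D}^c$, this delivers the two terms on the right-hand side of \eqref{ineq. 08.01-9}.

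I do not expect a serious obstacle: once the reduction and the rescaling are set up, everything is handled by the two earlier tools (the commutator estimate Lemma~\ref{lem perturb} and the Bessel-potential equivalence \eqref{frac rel}). The only subtle point is the distributional verification that $\zeta_{-n}(e^n\cdot)\Delta^{\alpha/2}u_n\equiv 0$ despite $u$ having non-trivial exterior values, but this follows cleanly from the support of $\zeta_{-n}$ and the equation on $D$.
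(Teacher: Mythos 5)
Your proposal is correct and follows essentially the same route as the paper: reduce to the single gain $\gamma=\mu+\alpha/2$ and iterate, observe that $\zeta_{-n}(e^n\cdot)\Delta^{\alpha/2}u_n$ vanishes so that $\Delta^{\alpha/2}\bigl(u_n\zeta_{-n}(e^n\cdot)\bigr)=F_n$ is the commutator, control the weighted sum of $\|F_n\|_{H_p^{\mu-\alpha/2}}^p$ by Lemma \ref{lem perturb} with $\lambda=0$, and conclude via the equivalence \eqref{frac rel}. No gaps.
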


\begin{proof}
We repeat the argument in the proof of Lemma \ref{higher para}. 
As before, we assume $\gamma =\mu+\alpha/2$.

 Since $u$ is a weak solution to \eqref{elliptic sec 4},  for any $n\in \bZ$, $u_n(x):=u(e^n x)$ satisfies
\begin{equation*} 
\Delta^{\alpha/2}(u_n(\cdot)\zeta_{-n}(e^n\cdot))(x) = F_n(x),\quad x\in \bR^d,
\end{equation*}
where $F_n(x) = - \Delta^{\alpha/2}(u_n(\cdot)\zeta_{-n}(e^n\cdot))(x) + \zeta_{-n}(e^nx) \Delta^{\alpha/2}u_n(x)$.
   By Lemma \ref{lem perturb},  
\begin{align*} 
   &\sum_{n\in\bZ}e^{n(\theta-\alpha p/2)}\|\Delta^{\alpha/2}(u_n(\cdot)\zeta_{-n}(e^n\cdot))\|_{H_p^{\gamma-\alpha}}^p =\sum_{n\in\bZ}e^{n(\theta-\alpha p/2)}\|F_n\|_{H_p^{\mu-\alpha/2}}^p 
    \\
    &\leq N\|\psi^{-\alpha/2}r_{\cO}u\|_{H_{p,\theta}^{\mu}(\cO)}^p + N \|\psi^{-\alpha/2}g\|_{L_{p,\theta,\sigma}(\overline{\cO}^c)}^p.
\end{align*}
This and \eqref{frac rel} prove \eqref{ineq. 08.01-9}. 
 The lemma is proved.
\end{proof}

\vspace{1em}
\textbf{Proof of Theorem \ref{main thm para}}

\begin{proof} The uniqueness follows from Lemma \ref{lem zero para}.   

Suppose that Theorem \ref{main thm para} holds when $u_0=0$ and $f=0$, and let $u_1$ denote the  solution  in this case. Also, let $u_2$ and $u_3$ denote the solution  from Lemma \ref{lem zero para} and Lemma \ref{lem zero initial}, respectively. Then, $u:=u_1+u_2+u_3$ becomes a solution to \eqref{para def} satisfying \eqref{est main para}. Thus, the theorem follows from the following lemma.  \end{proof}

\begin{lem} \label{thm neg ext}
Let  $\gamma,\lambda\in\bR$, $\theta\in(d-1,d-1+p)$ and $g\in \psi^{-\alpha/2}\bH_{p,\theta,\sigma}^\lambda(\overline{\cO}^c,T)$. Suppose $\sigma>-\theta-\alpha p/2$ if $\cO$ is bounded, and $\sigma=0$ if $\cO$ is a half space. Then, equation \eqref{parabolic sec 4} has a solution $u \in e_{(0,T)\times\cO}\frH^{\gamma+\alpha}_{p,\theta}(\cO,T) \oplus e_{(0,T)\times\overline{\cO}^c}\bH^{\lambda}_{p,\theta-\alpha p/2,\sigma}(\overline{\cO}^c,T)$ and
\begin{align} \label{ineq ext para}
\|r_{(0,T)\times \cO}u\|_{\frH_{p,\theta}^{\gamma+\alpha}(\cO,T)} \leq N \|\psi^{-\alpha/2} g\|_{\bH_{p,\theta,\sigma}^\lambda(\overline{\cO}^c,T)}.
\end{align}
\end{lem}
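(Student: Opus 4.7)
My plan is to construct $u$ for smooth $g$ via the probabilistic formula of Lemma~\ref{lem prob para}, establish the a priori estimate \eqref{ineq ext para} by a duality argument that trades regularity of $g$ in $\overline{D}^c$ against regularity of an adjoint solution in $D$, and then pass to the limit. First, for $g\in C_c^\infty((0,T)\times\overline{D}^c)$, I set $u:=\cQ_D g$ on $D$ and $u:=g$ on $\overline{D}^c$; by Lemma~\ref{lem prob para}, $u$ is a weak solution to \eqref{parabolic sec 4} lying in $\frH_{p,\theta}^{0}(D,T)$, and combining the zero-th order bound from Lemmas~\ref{lem para pp} and \ref{lem zeroth para} with the bootstrap in Lemma~\ref{higher para} places $u$ in $\frH_{p,\theta}^{\gamma+\alpha}(D,T)$ for every $\gamma\ge 0$.

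The core of the proof is the estimate \eqref{ineq ext para} for arbitrary $\gamma,\lambda\in\bR$. By the duality in Lemma~\ref{lem space}(iv),
\begin{align*}
\|\psi^{-\alpha/2}u\|_{\bH_{p,\theta}^{\gamma+\alpha}(D,T)} = \sup_{h\in C_c^\infty((0,T)\times D)}\frac{\int_0^T\!\!\int_D u(t,x)h(t,x)\,dxdt}{\|\psi^{\alpha/2}h\|_{\bH_{p',\theta'}^{-\gamma-\alpha}(D,T)}}.
\end{align*}
Given such an $h$, I let $w\in \frH_{p',\theta'}^{-\gamma}(D,T)$ solve the forward Dirichlet problem $\partial_t w=\Delta^{\alpha/2}w + h(T-\cdot,\cdot)$ with zero initial and zero exterior data, furnished by Lemma~\ref{lem zero para}, and set $v(t,x):=w(T-t,x)$; then $v$ satisfies the backward adjoint equation $\partial_t v+\Delta^{\alpha/2}v=-h$ on $(0,T)\times D$ with $v(T,\cdot)=0$, $v\equiv 0$ on $\overline{D}^c$, and $\|\psi^{-\alpha/2}v\|_{\bH_{p',\theta'}^{-\gamma}(D,T)}\le C\|\psi^{\alpha/2}h\|_{\bH_{p',\theta'}^{-\gamma-\alpha}(D,T)}$. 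The plan is then to justify the identity
\begin{align*}
\int_0^T\!\!\int_D u(t,x)h(t,x)\,dxdt=\int_0^T\!\!\int_{\overline{D}^c}g(t,x)\Delta^{\alpha/2}v(t,x)\,dxdt,
\end{align*}
obtained by pairing $\partial_t u=\Delta^{\alpha/2}u$ against $v$, transferring $\Delta^{\alpha/2}$ via \eqref{eqn 3.14.5}, integrating by parts in $t$ (using $u(0,\cdot)=0=v(T,\cdot)$), and invoking $v\equiv 0$ on $\overline{D}^c$; rigorously this is first verified on $C_c^\infty$-approximants of $u$ and $v$ in their respective $\frH$-spaces (as in the uniqueness proof of Lemma~\ref{lem zero para}) and then extended by continuity. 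Combining this identity with H\"older, the duality on $\overline{D}^c$ from Lemma~\ref{lem space}(iv), and Lemma~\ref{lem out}(i) applied with $(p',\theta',\sigma',-\lambda,-\gamma)$ in place of $(p,\theta,\sigma,\lambda,\gamma)$,
\begin{align*}
\bigg|\int_0^T\!\!\int_{\overline{D}^c}g\,\Delta^{\alpha/2}v\,dxdt\bigg|
&\le C\|\psi^{-\alpha/2}g\|_{\bH_{p,\theta,\sigma}^{\lambda}(\overline{D}^c,T)}\|\psi^{\alpha/2}\Delta^{\alpha/2}v\|_{\bH_{p',\theta',\sigma'}^{-\lambda}(\overline{D}^c,T)}\\
&\le C\|\psi^{-\alpha/2}g\|_{\bH_{p,\theta,\sigma}^{\lambda}(\overline{D}^c,T)}\|\psi^{-\alpha/2}v\|_{\bH_{p',\theta'}^{-\gamma}(D,T)}\\
&\le C\|\psi^{-\alpha/2}g\|_{\bH_{p,\theta,\sigma}^{\lambda}(\overline{D}^c,T)}\|\psi^{\alpha/2}h\|_{\bH_{p',\theta'}^{-\gamma-\alpha}(D,T)}.
\end{align*}
Taking the supremum over $h$ controls the $\bH_{p,\theta}^{\gamma+\alpha}$-component of the $\frH^{\gamma+\alpha}$-norm, and the time-derivative component $\|\psi^{\alpha/2}u_t\|_{\bH_{p,\theta}^{\gamma}(D,T)}$ then follows from $u_t=\Delta^{\alpha/2}u$ on $D$ and inequality \eqref{ineq 1025-1}.

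For general $g\in\psi^{\alpha/2}\bH_{p,\theta,\sigma}^{\lambda}(\overline{D}^c,T)$, I approximate $g$ by $g_n\in C_c^\infty((0,T)\times\overline{D}^c)$ in this norm (Lemma~\ref{lem space}(i) in space plus time mollification); the corresponding $u_n$ are Cauchy in $\frH_{p,\theta}^{\gamma+\alpha}(D,T)$ by the a priori estimate of the previous paragraph, and the limit $u$ is a weak solution to \eqref{parabolic sec 4} satisfying \eqref{ineq ext para}. The main technical hurdle will be the rigorous justification of the duality identity under only the minimal regularity assumed on $u$ and $v$: both must be approximated by $C_c^\infty$-functions with strict control of the nonlocal pairings via \eqref{eqn 3.14.5} and of the time integration by parts, and this is also the place where the restriction $\sigma=0$ in the half-space case becomes essential because there the integration at infinity in $\overline{D}^c$ cannot be absorbed by a weight.
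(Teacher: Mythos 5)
Your construction of the solution for smooth $g$ (probabilistic representation plus the bootstrap of Lemma \ref{higher para}) matches the paper's Case 1, but your treatment of general $\lambda$ is genuinely different. The paper handles $\lambda<0$ by induction: it uses Lemma \ref{negative rep} to write $g=g_0+\sum_i D_i(\psi g_i)$ with each $g_i$ one order smoother, solves the exterior problem for each $g_i$, assembles $v=v_0+\sum_i D_i(\psi v_i)$, and corrects the resulting commutator forcing term $h$ by subtracting the zero-exterior-data solution from Lemma \ref{lem zero para}. You instead run a single space-time duality argument against the backward adjoint solution, reducing everything to the pairing $\int_0^T\int_{\overline{D}^c} g\,\Delta^{\alpha/2}v$ and then invoking Lemma \ref{lem out}$(i)$ with the dual parameters $(p',\theta',\sigma',-\lambda,-\gamma)$; I checked that the hypotheses of that lemma transform correctly (in particular $\sigma'<-\theta'+dp'+\alpha p'/2$ is equivalent to $\sigma>-\theta-\alpha p/2$, and $\theta\in(d-1,d-1+p)$ gives $\theta'\in(d-1,d-1+p')$). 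Your route is more unified — it treats all $\gamma,\lambda$ at once rather than peeling off one order of $\lambda$ at a time — and it is very much in the spirit of the paper's own proof of the negative-order case of Lemma \ref{lem zero para}, which uses the same time-reversed adjoint. What the paper's lifting argument buys is that it avoids the Green's-type identity you must justify; note, however, that for $g,h$ smooth and compactly supported your identity follows directly from Fubini's theorem applied to the kernel representations $u=\cQ_Dg$ and $v(t,\cdot)=\int_t^T T^D_{s-t}h(s,\cdot)\,ds$, since $\Delta^{\alpha/2}v(s,z)=c_d\int_D|y-z|^{-d-\alpha}v(s,y)\,dy$ for $z\in\overline{D}^c$ — this sidesteps the abstract integration-by-parts in time entirely and would make your "main technical hurdle" essentially painless. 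With that observation the proposal is complete and correct.
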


\begin{proof}
Since $\frH_{p,\theta}^{\gamma_1}(\cO,T) \subset \frH_{p,\theta}^{\gamma_2}(\cO,T)$ for $\gamma_1 \geq \gamma_2$, we only need to prove the lemma for $\gamma\geq 0$. 

\vspace{1mm}
{\textbf{Case 1 $\lambda\geq0$.}}
It suffices to assume $\lambda=0$. If $g \in C_c^\infty((0,T)\times \overline{\cO}^c)$, then all the claims follow from Lemmas \ref{lem para pp}, \ref{lem zeroth para}, \ref{lem prob para} and \ref{higher para}. For general $g$, one can use the standard approximation argument  as in the proof of Lemma \ref{lem zero para}. 

{\textbf{Case 2 $-1\leq\lambda<0$.}}
By Lemma \ref{negative rep}, there exist $g_i\in \psi^{\alpha/2}\bH_{p,\theta,\sigma}^{\lambda+1}(\overline{\cO}^c,T) \, (i=0,1,\dots,d)$ such that $g=g_0 + \sum_{i=1}^d D_i \left(\psi g_i\right)$
and
\begin{align} \label{ineq 1025-5}
\|\psi^{-\alpha/2} g\|_{H_{p,\theta}^{\lambda}(\cO)} \approx \sum_{i=0}^d\|\psi^{-\alpha/2} g_i\|_{H_{p,\theta}^{\lambda+1}(\cO)}.
\end{align}
Since $\lambda+1\geq0$, by the result for $\lambda\geq0$, there exists a solution $v_i$ to \eqref{parabolic sec 4} with $g_i$ in place of $g$ for each $i=0,1,\dots,d$, which satisfies \eqref{ineq ext para} for any $\gamma \in \bR$. In particular, for given $\gamma\geq 0$, we have  
\begin{align*}
\|r_{(0,T)\times \cO}v_i\|_{\frH_{p,\theta}^{\gamma+1+\alpha}(\cO,T)} \leq N \|\psi^{-\alpha/2} g_i\|_{\bH_{p,\theta,\sigma}^{\lambda+1}(\cO,T)}.
\end{align*}
 Then, for $v:=v_0+ \sum_{i=1}^d D_i \left(\psi v_i\right)$, by Lemma \ref{lem deriv}, $r_{(0,T)\times \cO}v \in \frH_{p,\theta}^{\gamma+\alpha}(\cO,T)$ and $v$ is a solution to
\begin{equation*}
\begin{cases}
\partial_t u = \Delta^{\alpha/2}u + h,\quad &(t,x)\in(0,T)\times \cO,
\\
u(0,x)=0,\quad & x\in \cO,
\\
u(t,x)=g(t,x),\quad &(t,x)\in(0,T)\times \overline{\cO}^c,
\end{cases}
\end{equation*}
where $h=\sum_{i=1}^d D_i\left(\psi\Delta^{\alpha/2}v_i\right) - \sum_{i=1}^d \Delta^{\alpha/2}\left(D_i\psi v_i\right)$.
Here, Theorem \ref{thm frac def} and Lemma \ref{lem deriv} yield $h\in \psi^{-\alpha/2}\bH_{p,\theta}^{\gamma}(\cO,T)$. Thus, by Lemma \ref{lem zero para}, there exists a unique solution $w$ to
\begin{equation*}
\begin{cases}
\partial_t u = \Delta^{\alpha/2}u + h,\quad &(t,x)\in(0,T)\times \cO,
\\
u(0,x)=0,\quad & x\in \cO,
\\
u(t,x)=0,\quad &(t,x)\in (0,T)\times \overline{\cO}^c.
\end{cases}
\end{equation*}
Also, $w$ satisfies 
\begin{align} \label{ineq 1025-7}
\|r_{(0,T)\times \cO}w\|_{\frH_{p,\theta}^{\gamma+\alpha}(\cO,T)} &\leq N \|\psi^{\alpha/2} h\|_{\bH_{p,\theta}^\gamma(\cO,T)} \nonumber
\\
&\leq N \sum_{i=1}^d \|\psi^{-\alpha/2} r_{(0,T)\times \cO}v_i\|_{\bH_{p,\theta}^{\gamma+1+\alpha}(\cO,T)}.
\end{align}
Then, due to \eqref{ineq 1025-5}--\eqref{ineq 1025-7}, $u:=v-w$ is a solution to \eqref{parabolic sec 4} satisfying \eqref{ineq ext para}.

{\textbf{Case 3 $\lambda<-1$.}}
One can repeat the argument in Case 2 to treat the case $\gamma<-1$.

The lemma is proved.
\end{proof}

 The following lemma is a version of Lemma \ref{thm neg ext}. The proof is similar to the one of Lemma \ref{thm neg ext} and thus omitted here.

\begin{lem}
Let  $\gamma,\lambda\in\bR$,   $\theta\in(d-1,d-1+p)$ and $g\in \psi^{-\alpha/2}H_{p,\theta,\sigma}^\lambda(\overline{\cO}^c)$. Suppose $\sigma>-\theta-\alpha p/2$ if $\cO$ is bounded, and $\sigma=0$ is $\cO$ is a half space. Then, equation \eqref{elliptic sec 4} has a solution $u\in e_{\cO}H^{\gamma+\alpha}_{p,\theta-\alpha p/2}(\cO) \oplus e_{\overline{\cO}^c}H^{\lambda}_{p,\theta-\alpha p/2,\sigma}(\overline{\cO}^c)$, and for this solution we have
\begin{align*}
\|\psi^{-\alpha/2}r_{\cO}u\|_{H_{p,\theta}^{\gamma+\alpha}(\cO)} \leq N \|\psi^{-\alpha/2} g\|_{H_{p,\theta,\sigma}^\lambda(\cO)}.
\end{align*}
\end{lem}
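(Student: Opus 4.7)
The plan is to mirror, step for step, the proof of Lemma \ref{thm neg ext}, substituting each parabolic input by its elliptic counterpart: replace Lemmas \ref{lem para pp}, \ref{lem zeroth para}, \ref{lem prob para}, \ref{higher para}, and \ref{lem zero para} by Lemmas \ref{lem zeroth ell}, \ref{lem prob ellip}, \ref{higher ellip}, and \ref{lem zero ellip}, respectively. Because of the continuous embedding $H_{p,\theta,\sigma}^{\lambda_1}\subset H_{p,\theta,\sigma}^{\lambda_2}$ for $\lambda_1\geq \lambda_2$, it suffices to construct a solution satisfying the estimate for $\gamma\geq 0$, and we only need to verify each step for the three ranges $\lambda\geq 0$, $-1\leq \lambda<0$, $\lambda<-1$ separately.

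\emph{Case $\lambda\geq 0$.} It is enough to treat $\lambda=0$. For $g\in C^{\infty}_c(\overline{D}^c)$, set $u:=\cK_D g$ on $D$ and $u:=g$ on $\overline{D}^c$. Lemma \ref{lem prob ellip} shows $u$ is a weak solution to \eqref{elliptic sec 4}, and Lemma \ref{lem zeroth ell} gives the zeroth order bound $\|\psi^{-\alpha/2}u\|_{L_{p,\theta}(D)}\le C\|\psi^{-\alpha/2}g\|_{L_{p,\theta,\sigma}(\overline{D}^c)}$. Applying Lemma \ref{higher ellip} with $\mu=0$ bootstraps this to $u\in \psi^{\alpha/2}H^{\gamma+\alpha}_{p,\theta}(D)$ with the desired inequality for any $\gamma\ge 0$. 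For general $g$, a standard Cauchy-sequence argument (using density of $C^{\infty}_c(\overline{D}^c)$ in $\psi^{\alpha/2}L_{p,\theta,\sigma}(\overline{D}^c)$ from Lemma \ref{lem space}(i)) extends the construction and the estimate.

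\emph{Case $-1\leq \lambda<0$.} Apply Lemma \ref{negative rep} to decompose
\[
g=g_0+\sum_{i=1}^d D_i(\psi g_i),\qquad g_i\in \psi^{\alpha/2} H^{\lambda+1}_{p,\theta,\sigma}(\overline{D}^c),
\]
with $\sum_{i}\|\psi^{-\alpha/2}g_i\|_{H^{\lambda+1}_{p,\theta,\sigma}(\overline{D}^c)}\approx \|\psi^{-\alpha/2}g\|_{H^{\lambda}_{p,\theta,\sigma}(\overline{D}^c)}$. Since $\lambda+1\ge 0$, the previous case yields solutions $v_i\in \psi^{\alpha/2}H^{\gamma+1+\alpha}_{p,\theta}(D)$ to \eqref{elliptic sec 4} with $g_i$ in place of $g$ and the corresponding estimate. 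Set $v:=v_0+\sum_i D_i(\psi v_i)$; Lemma \ref{lem deriv} ensures $v\in \psi^{\alpha/2}H^{\gamma+\alpha}_{p,\theta}(D)$, while $v|_{\overline{D}^c}=g$ and
\[
\Delta^{\alpha/2}v=\sum_{i=1}^d D_i\bigl(\psi\,\Delta^{\alpha/2}v_i\bigr)-\sum_{i=1}^d\Delta^{\alpha/2}\bigl(D_i\psi\cdot v_i\bigr)=:h\quad\text{on } D.
\]
By Lemma \ref{thm frac def} applied to each $v_i$ (valid since $v_i=g_i$ on $\overline{D}^c$ lies in the required exterior weighted space) together with Lemma \ref{lem deriv}, we have $h\in \psi^{-\alpha/2}H^{\gamma}_{p,\theta}(D)$ with controlled norm. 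Now invoke Lemma \ref{lem zero ellip} to produce $w\in \psi^{\alpha/2}H^{\gamma+\alpha}_{p,\theta}(D)$ solving $\Delta^{\alpha/2}w=h$ on $D$ with $w=0$ on $\overline{D}^c$, controlled by $\|\psi^{\alpha/2}h\|_{H^{\gamma}_{p,\theta}(D)}$. Then $u:=v-w$ is the desired solution, and combining the three bounds gives \eqref{ineq ext para}'s elliptic analogue.

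\emph{Case $\lambda<-1$.} Iterate the decomposition $\lceil -\lambda\rceil$ times, each application reducing the regularity deficit by $1$, until we reach a level $\lambda'\ge 0$ covered by the first case.

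The main obstacle is the control of the commutator-like term $h$ in the second case: one must verify carefully that $\psi\,\Delta^{\alpha/2}v_i$ and $\Delta^{\alpha/2}(D_i\psi\cdot v_i)$ both live in $\psi^{-\alpha/2}H^{\gamma}_{p,\theta}(D)$, which requires checking the joint conditions of Lemma \ref{thm frac def}(iii) on both $D$ and $\overline{D}^c$ for the arguments to which $\Delta^{\alpha/2}$ is applied (in particular, that the weight shifts produced by multiplication by $\psi$ and by $D_i\psi$ remain consistent with the allowed range of $\theta$ and $\sigma$). Once this bookkeeping is verified, the rest reduces to applying the already established elliptic estimates.
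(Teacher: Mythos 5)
Your proposal is correct and follows exactly the route the paper intends: the paper omits this proof, stating only that it is "similar to the one of Lemma \ref{thm neg ext}," and your argument is precisely that parabolic proof transplanted with the elliptic substitutes (Lemmas \ref{lem zeroth ell}, \ref{lem prob ellip}, \ref{higher ellip}, \ref{lem zero ellip}), including the three-case split in $\lambda$ and the $\Lambda_i$-decomposition with the commutator correction term $h$. The only cosmetic slip is that the reduction to $\gamma\geq 0$ should be justified by the embedding $\psi^{\alpha/2}H^{\gamma_1+\alpha}_{p,\theta}(D)\subset \psi^{\alpha/2}H^{\gamma_2+\alpha}_{p,\theta}(D)$ for $\gamma_1\geq\gamma_2$ (the solution space on $D$), not by the embedding of the exterior spaces in $\lambda$.
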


\textbf{Proof of Theorem \ref{main thm ellip}}

\begin{proof} 
It suffices to repeat the proof of Theorem \ref{main thm para}, replacing the results for parabolic equations by their corresponding elliptic versions.
\end{proof}

\appendix

\section{Auxiliary results}

Recall that $\cO$ is a half space or bounded $C^{1,1}$ open set.
Let $\fO$ be either $\cO$ or $\overline{\cO}^c$.

\begin{lem}
Let $d\geq2$ and $\alpha\in(0,2)$. Then,
\begin{align} \label{070820}
\int_{\bR^{d-1}} \left( 1\wedge |x|^{-d-\alpha} \right) dx' = \int_{\bR^{d-1}} \left( 1\wedge |(x^1,x')|^{-d-\alpha} \right) dx' \leq N(\alpha,d) \left( 1\wedge |x^1|^{-1-\alpha} \right).
\end{align}
\end{lem}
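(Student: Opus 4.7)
The plan is to split according to whether $|x^1| \leq 1$ or $|x^1| > 1$, since the desired bound $1\wedge |x^1|^{-1-\alpha}$ switches behavior at $|x^1|=1$.

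First I would treat the easy regime $|x^1|\leq 1$, where the target bound is just a constant. Here I would further split the $x'$-integral into $|x'|\leq 1$ and $|x'|>1$. On $\{|x'|\leq 1\}$ the integrand is bounded by $1$ and the region has finite measure, contributing $O(1)$. On $\{|x'|>1\}$ I would use the crude bound $1\wedge |(x^1,x')|^{-d-\alpha}\leq |x'|^{-d-\alpha}$ and pass to polar coordinates in $\bR^{d-1}$:
\begin{equation*}
\int_{\{|x'|>1\}}|x'|^{-d-\alpha}\,dx' = C(d)\int_1^{\infty} s^{-2-\alpha}\,ds<\infty,
\end{equation*}
which is finite because $\alpha>0$.

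Next I would treat the decisive regime $|x^1|>1$, where the bound $C|x^1|^{-1-\alpha}$ is what forces the exponent $1+\alpha$. Since $|(x^1,x')|\geq |x^1|>1$, the minimum simplifies and
\begin{equation*}
\int_{\bR^{d-1}}1\wedge |(x^1,x')|^{-d-\alpha}\,dx' = \int_{\bR^{d-1}}\bigl(|x^1|^2+|x'|^2\bigr)^{-(d+\alpha)/2}dx'.
\end{equation*}
The key step is the scaling $x'=|x^1|y'$, which gives $dx'=|x^1|^{d-1}dy'$ and turns the right-hand side into
\begin{equation*}
|x^1|^{-(d+\alpha)+(d-1)}\int_{\bR^{d-1}}(1+|y'|^2)^{-(d+\alpha)/2}dy' = C(d,\alpha)\,|x^1|^{-1-\alpha},
\end{equation*}
where the remaining integral converges since $(d+\alpha)-(d-1)=1+\alpha>1$.

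There is no real obstacle here; the only point that requires attention is making sure the dimension count in the scaling argument gives exactly the exponent $-1-\alpha$, which it does, and verifying convergence of the model integral $\int_{\bR^{d-1}}(1+|y'|^2)^{-(d+\alpha)/2}dy'$, which holds because $\alpha>0$. Combining the two cases yields the stated inequality with a constant depending only on $d$ and $\alpha$.
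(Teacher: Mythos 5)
Your proof is correct and follows essentially the same strategy as the paper: split at $|x^1|=1$, observe that for $|x^1|>1$ the minimum is the power term and a scaling $x'\mapsto |x^1|y'$ produces exactly the exponent $-1-\alpha$, and show the integral is $O(1)$ when $|x^1|\leq 1$. The only (harmless) difference is that you handle the regime $|x^1|\leq 1$ by a direct split of the $x'$-integral rather than by performing the scaling first as the paper does, which is if anything slightly cleaner.
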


\begin{proof}
By the change of variables,
\begin{align*}
\int_{\bR^{d-1}} \left( 1\wedge |x|^{-d-\alpha} \right) dx' = N(d) |x^1|^{d-1} \int_{\bR^{d-1}} \left( 1\wedge |x^1|^{-d-\alpha} |(1,x')|^{-d-\alpha} \right) dx'.
\end{align*}
Here, if $|x^1|>1$, then 
\begin{align*}
&\int_{\bR^{d-1}} \left( 1\wedge |x^1|^{-d-\alpha} |(1,x')|^{-d-\alpha} \right) dx' 
\\
&= |x^1|^{-d-\alpha} \int_{\bR^{d-1}} |(1,x')|^{-d-\alpha} dx' \leq N(\alpha,d) |x^1|^{-d-\alpha}.
\end{align*}
Thus, we have \eqref{070820} when $|x^1|>1$. Next, assume $|x^1| \leq 1$. Then,
\begin{align*}
&|x^1|^{d-1}\int_{\bR^{d-1}} \left( 1\wedge |x^1|^{-d-\alpha} |(1,x')|^{-d-\alpha} \right) dx' 
\\
&= |x^1|^{d-1}\int_{|x'|\leq(|x^1|^{-2}-1)^{1/2}} dx'+ |x^1|^{-1-\alpha}\int_{|x'|>(|x^1|^{-2}-1)^{1/2}} |(1,x')|^{-d-\alpha} dx'
\\
&\leq N(d) (1-|x^1|^2)^{(d-1)/2} + |x^1|^{-1-\alpha}\int_{|x'|>(|x^1|^{-2}-1)^{1/2}} |(1,x')|^{-d-\alpha} dx'.
\end{align*}
Here, one can easily show that the last two terms are bounded. Thus, we get \eqref{070820} and the lemma is proved.

\end{proof}

\begin{lem} \label{aux para half}
Let $\cO$ be a half space.
Suppose that $\nu_0,\nu_1 \in \bR$ satisfy
\begin{align} \label{aux half assu}
\nu_0+\nu_1>-\frac{2}{\alpha}, \quad 2>\nu_1 
 \neq -\frac{2}{\alpha}.
\end{align}
Then, for any $(t,x)\in (0,\infty)\times \fO$,
\begin{align} \label{aux half}
&\int_{\overline{\fO}^c} \left( t^{-d/\alpha-1} \wedge |x-z|^{-d-\alpha} \right) \left( 1 \wedge \frac{d_z^{\alpha/2}}{\sqrt{t}} \right)^{\nu_0} d_z^{\nu_1\alpha/2} dz  \nonumber
\\
&\leq N \left( t^{\nu_1/2-1} \wedge  \left( d_x^{\nu_1\alpha/2-\alpha} \vee t^{\nu_1/2+1/\alpha}d_x^{-1-\alpha} \right) \right),
\end{align}
where $N=N(d,\alpha,\nu_0,\nu_1)$.
Moreover, if $\cO$ is bounded, the same result holds for $x\in \fO$ such that $d_x\leq 2diam(\cO)$ with a constant $N$ depend also on $\cO$.
\end{lem}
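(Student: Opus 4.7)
I would handle the half-space case first; the bounded case follows by localization. Without loss of generality take $D=\bR^d_+=\{x^1>0\}$ and consider $x\in\fD=D$ with $d_x=x^1$ and $z\in\overline{\fD}^c=\{z^1\le 0\}$ with $d_z=-z^1$ (the other configurations are symmetric). Apply Fubini, integrating the tangential variables $z'\in\bR^{d-1}$ first. Writing $a:=t^{1/\alpha}$ and rescaling by $\tilde z'=(z'-x')/a$, the factor $t^{-d/\alpha-1}\wedge |x-z|^{-d-\alpha}=a^{-d-\alpha}(1\wedge|((z^1-x^1)/a,\tilde z')|^{-d-\alpha})$ factors out, and \eqref{070820} applied to the variable $(z^1-x^1)/a$ gives
\begin{equation*}
\int_{\bR^{d-1}} \bigl(t^{-d/\alpha-1}\wedge|x-z|^{-d-\alpha}\bigr)\,dz'
\;\le\; C\bigl(t^{-1/\alpha-1}\wedge|x^1-z^1|^{-1-\alpha}\bigr).
\end{equation*}
This reduces the estimate \eqref{aux half} to a one-dimensional integral $I(t,x^1)$ in the variable $s:=-z^1\in(0,\infty)$, with $|x^1-z^1|=x^1+s$.

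Next I would introduce the scaled variables $u:=s/a$ and $b:=d_x/a=d_x/t^{1/\alpha}$, so that $x^1+s=a(b+u)$ and $(1\wedge d_z^{\alpha/2}/\sqrt{t})^{\nu_0}=(1\wedge u^{\alpha/2})^{\nu_0}$. A short computation yields $I(t,x^1)=t^{\nu_1/2-1}J(b)$, where
\begin{equation*}
J(b):=\int_0^\infty \bigl(1\wedge(b+u)^{-1-\alpha}\bigr)\bigl(1\wedge u^{\alpha/2}\bigr)^{\nu_0} u^{\nu_1\alpha/2}\,du.
\end{equation*}
Since $d_x^{\nu_1\alpha/2-\alpha}=t^{\nu_1/2-1}b^{\nu_1\alpha/2-\alpha}$ and $t^{\nu_1/2+1/\alpha}d_x^{-1-\alpha}=t^{\nu_1/2-1}b^{-1-\alpha}$, the target bound \eqref{aux half} is equivalent to the two uniform estimates $J(b)\le C$ and $J(b)\le C(b^{\nu_1\alpha/2-\alpha}\vee b^{-1-\alpha})$.

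For the uniform bound, split $J(b)$ at $u=1$: on $(0,1)$ the integrand is $\le u^{(\nu_0+\nu_1)\alpha/2}$, which is integrable exactly because $\nu_0+\nu_1>-2/\alpha$; on $(1,\infty)$ it is $\le (1+u)^{-1-\alpha}u^{\nu_1\alpha/2}$, integrable exactly because $\nu_1<2$. Both growth/decay conditions are supplied by the hypothesis \eqref{aux half assu}. For the decay bound, assume $b$ is large. The contribution from $(0,1)$ is controlled by $b^{-1-\alpha}\int_0^1 u^{(\nu_0+\nu_1)\alpha/2}\,du\le Cb^{-1-\alpha}$. For the contribution from $(1,\infty)$, substitute $u=bv$ to obtain $b^{\nu_1\alpha/2-\alpha}\int_{1/b}^\infty(1+v)^{-1-\alpha}v^{\nu_1\alpha/2}\,dv$. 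The behavior of this remaining integral depends on $\nu_1$: if $\nu_1>-2/\alpha$, the integral converges as $b\to\infty$ and gives the factor $b^{\nu_1\alpha/2-\alpha}$; if $\nu_1<-2/\alpha$, the singularity at $v=0$ contributes $b^{-\nu_1\alpha/2-1}$, yielding net decay $b^{-1-\alpha}$; the forbidden case $\nu_1=-2/\alpha$ would produce a logarithmic divergence, which is precisely the reason for the hypothesis $\nu_1\neq-2/\alpha$. Combining all regions yields $J(b)\le C(b^{\nu_1\alpha/2-\alpha}\vee b^{-1-\alpha})$.

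The main obstacle is precisely this last step: bookkeeping the three separate regimes ($\nu_1>-2/\alpha$ vs.\ $\nu_1<-2/\alpha$, and the splits at $u\sim 1$ and $u\sim b$) in order to see both terms of the $\vee$ appearing and why the critical value must be excluded. For the bounded $D$ case with $d_x\le 2\,\mathrm{diam}(D)$, I would use the $C^{1,1}$ regularity of $\partial D$: locally flatten the boundary in a neighborhood of the nearest point of $\partial D$ to $x$, use the equivalence of $d_z$ with the flattened-coordinate distance, and control the integral over the region where $d_z\gtrsim 1$ trivially using the bound $|x-z|^{-d-\alpha}d_z^{\nu_1\alpha/2}\lesssim d_z^{\nu_1\alpha/2-d-\alpha}$, reducing everything to the half-space estimate just proved (with constants now depending on $D$).
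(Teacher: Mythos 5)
Your proof is correct and follows essentially the same route as the paper: reduce to one dimension via \eqref{070820}, scale out $t$ (you use the dimensionless variable $b=d_x/t^{1/\alpha}$ where the paper normalizes $t=1$, which is equivalent), and then split the one-dimensional integral with the same case analysis on $\nu_1$ versus $-2/\alpha$ to produce the two terms of the maximum. Your sketch of the bounded case is thinner than the paper's (which covers $\partial D$ by finitely many balls and, when $x$ lies outside a given ball, substitutes a surrogate point $\tilde x$ inside it), but the localization idea is the same.
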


\begin{proof}
{\bf{1.}} $\cO$ is a half space.

In this case, it is enough to prove \eqref{aux half} when $\fO=\cO$, $t=1$ and $d=1$. Indeed, by the change of variables,
\begin{align*}
&\int_{\overline{\fO}^c} \left( t^{-d/\alpha-1} \wedge |x-z|^{-d-\alpha} \right) \left( 1 \wedge \frac{|z^1|^{\alpha/2}}{\sqrt{t}} \right)^{\nu_0} |z^1|^{\nu_1\alpha/2} dz
\\
&= t^{d/\alpha+\nu_1/2} \int_{\overline{\fO}^c} \left( t^{-d/\alpha-1} \wedge |x-t^{1/\alpha}z|^{-d-\alpha} \right) \left( 1 \wedge |z^1|^{\alpha/2} \right)^{\nu_0} |z^1|^{\nu_1\alpha/2} dz
\\
&= t^{\nu_1/2-1} \int_{\overline{\fO}^c} \left( 1 \wedge |t^{-1/\alpha}x-z|^{-d-\alpha} \right) \left( 1 \wedge |z^1|^{\alpha/2} \right)^{\nu_0} |z^1|^{\nu_1\alpha/2} dz
\\
&\leq N t^{\nu_1/2-1} \left( 1\wedge \left( |t^{-1/\alpha}x^1|^{\nu_1 \alpha/2-\alpha} \vee |t^{-1/\alpha}x^1|^{-1-\alpha} \right) \right) 
\\
&= N \left( t^{\nu_1/2-1} \wedge  \left( |x^1|^{\nu_1\alpha/2} \vee t^{\nu_1/2+1/\alpha}|x^1|^{-1-\alpha} \right) \right).
\end{align*} 
Therefore, we may assume $t=1$. Moreover, due to \eqref{070820}, we can further assume $d=1$. That is, we will show that
\begin{align} \label{aux half one}
\int_{-\infty}^0 \left( 1\wedge |x-z|^{-1-\alpha} \right) \left(1\wedge |z|^{\alpha/2} \right)^{\nu_0} |z|^{\nu_1\alpha/2} dz \leq N \left(1\wedge \left( |x|^{\nu_1\alpha/2-\alpha} \vee |x|^{-1-\alpha} \right) \right).
\end{align}

Let $x\in (0,1)$. Since $1\wedge |x-z|^{-1-\alpha} \leq 1\wedge |z|^{-1-\alpha}$ for $z\in(-\infty,0)$, we get
\begin{align*}
&\int_{-\infty}^0 \left( 1\wedge |x-z|^{-1-\alpha} \right) \left(1\wedge |z|^{\alpha/2} \right)^{\nu_0} |z|^{\nu_1\alpha/2} dz 
\\
&\leq \int_{-1}^0 |z|^{(\nu_0+\nu_1)\alpha/2} dz  + \int_{-\infty}^{-1} |z|^{-1+\nu_1\alpha/2-\alpha} dz.
\end{align*}
Here, the last two terms are finite due to \eqref{aux half assu}. Thus, we have \eqref{aux half one} when $x\in(0,1)$.

Next, let $x\in[1,\infty)$. For $z\in(-1,0)$, $|x-z|\approx |x|$. Thus,
\begin{align} \label{070914-1}
&\int_{-1}^0 \left( 1\wedge |x-z|^{-1-\alpha} \right) \left(1\wedge |z|^{\alpha/2} \right)^{\nu_0} |z|^{\nu_1\alpha/2} dz \nonumber
\\
&\leq N|x|^{-1-\alpha} \int_{-1}^0 |z|^{(\nu_0+\nu_1)\alpha/2} dz \leq N |x|^{-1-\alpha} \leq N \left( |x|^{-1-\alpha} \vee |x|^{\nu_1\alpha/2-\alpha} \right).
\end{align}
By the change of variables,
\begin{align} \label{070914-2}
&\int_{-\infty}^{-1} \left( 1\wedge |x-z|^{-1-\alpha} \right) \left(1\wedge |z|^{\alpha/2} \right)^{\nu_0} |z|^{\nu_1\alpha/2} dz \nonumber
\\
&\leq |x|^{\nu_1\alpha/2-\alpha} \int_{-\infty}^{-|x|^{-1}} |1-z|^{-1-\alpha} |z|^{\nu_1\alpha/2} dz \nonumber
\\
&\leq N |x|^{\nu_1\alpha/2-\alpha}\left(1_{\nu_1>-2/\alpha} + 1_{\nu_1<-2/\alpha}|x|^{-\nu_1\alpha/2-1}\right).
\end{align}
Thus, \eqref{070914-1} and \eqref{070914-2} yield \eqref{aux half one} when $x\in[1,\infty)$.

{\bf{2.}} $\cO$ is bounded and $d_z\leq 2diam(\cO)$.

Since $\cO$ is bounded, for any $R>0$, there exists $x_1,\dots,x_n\in\partial \cO$ such that if $y\in (\cup_{i=1}^n B_{R/2}(x_i))^c$ then $d_y>R/4$. Then,
\begin{align*}
&\int_{\overline{\fO}^c} \left( t^{-d/\alpha-1} \wedge |x-z|^{-d-\alpha} \right) \left( 1 \wedge \frac{d_z^{\alpha/2}}{\sqrt{t}} \right)^{\nu_0} d_z^{\nu_1\alpha/2} dz 
\\
&\leq \sum_{i=1}^n \int_{\overline{\fO}^c \cap B_{R/2}(x_i)} \cdots dz + \int_{\cO_{R/4}} \cdots dz =: \sum_{i=1}^n I_i(t,x) + II(t,x),
\end{align*}
where $\cO_S:=\{z\in \overline{\fO}^c:d_z\geq S\}$.

$(1)$ We estimate $I_i(x)$ for fixed $i\in\{1,2,\dots,n\}$.

Note that by reducing $R$ (if necessary), one can consider a $C^{1,1}$-bijective flattening boundary map $\Phi=(\Phi^1,\cdots,\Phi^d)$ defined on $B_R(x_i)$ such that $\Phi(B_R(x_i)\cap \fO)\subset \bR_{+}^d$ and $d_x\approx \Phi^1(x)$ on $B_R(x_i)\cap \fO$. Thus, for $x\in B_R(x_i) \cap \fO$, using the case $\cO=\bR_+^d$ above, one can easily estimate $I_i$.

Next, assume $x\in \fO\setminus B_R(x_i)$. Then, for any $z\in \overline{\fO}^c\cap B_{R/2}(x_i)$, $|x-z|>R/2$, and one can take $\tilde{x}\in \fO\cap B_R(x_i)$ such that $d_x\leq N(\cO,R) d_{\tilde{x}}$. Thus, for $z\in \overline{\fO}^c\cap B_R(x_i)$,
\begin{equation} \label{eq8152005}
|\tilde{x}-z|\leq |\tilde{x}-x|+|x-z| \leq 3diam(\cO)+R+|x-z| \leq N(\cO,R)|x-z|.
\end{equation}
Hence,
\begin{align*}
I_i(t,x) &\leq NI_i(t,\tilde{x}) \leq N \left( t^{\nu_1/2-1} \wedge  \left( d_{\tilde{x}}^{\nu_1\alpha/2-\alpha} \vee t^{\nu_1/2+1/\alpha}d_{\tilde{x}}^{-1-\alpha} \right) \right)
\\
&\leq N \left( t^{\nu_1/2-1} \wedge  \left( d_x^{\nu_1\alpha/2-\alpha} \vee t^{\nu_1/2+1/\alpha}d_x^{-1-\alpha} \right) \right).
\end{align*}

$(2)$ Next we estimate $II(x)$. 
Note that if $z\in \cO_{R/4}$, then for $y\in \partial \cO$ such that $d_z=|y-z|$,
\begin{align} \label{eq8151621}
d_z\leq |x-z| \leq |x-y|+|y-z| \leq 3diam(\cO)+d_z \leq N(\cO,R) d_z.
\end{align}
That is, $|x-z|\approx d_z$. Thus, there exists $c>0$ such that
\begin{align*}
II(t,x) &= \int_{\cO_{t^{1/\alpha}}} \cdots dz + \int_{\cO_{R/4} \cap \{d_z< t^{1/\alpha}\}} \cdots dz
\\
&\leq N \int_{|x-z|>c(t^{1/\alpha}\vee d_x)} |x-z|^{-d-\alpha+\nu_1\alpha/2} dz
\\
&\quad + N 1_{R/4<t^{1/\alpha}} \int_{|x-z|\leq c t^{1/\alpha}} t^{-d/\alpha-1-\nu_0\alpha/2} |x-z|^{(\nu_0+\nu_1)\alpha/2} dz
\\
&\leq N \left( t^{\nu_1/2-1} \wedge d_x^{\nu_1\alpha/2-\alpha} \right) + N 1_{R/4<t^{1/\alpha}} t^{\nu_1/2-1}
\\
&\leq N \left( t^{\nu_1/2-1} \wedge \left( d_x^{\nu_1\alpha/2-\alpha} \vee t^{\nu_1/2+1/\alpha}d_x^{-1-\alpha} \right) \right).
\end{align*}
The lemma is proved.
\end{proof}

\begin{lem} \label{aux dom}
Let $\cO$ be a bounded $C^{1,1}$ open set. Suppose that $\nu_0,\nu_1 \in \bR$ satisfy
\begin{align} \label{aux dom assu}
\nu_0+\nu_1>-\frac{2}{\alpha}.
\end{align}
Then, for any $(t,z)\in (0,\infty)\times \overline{\cO}^c$,
\begin{align} \label{23.05.07.1603}
\int_{\cO} |x-z|^{-d}\left( 1 \wedge \frac{d_x^{\alpha/2}}{\sqrt{t}} \right)^{\nu_0} d_x^{\nu_1\alpha/2} dx \leq N d_z^{-d} (t^{-\nu_0/2}+1),
\end{align}
where $N=N(d,\alpha,\cO,\nu_0,\nu_1)$.
\end{lem}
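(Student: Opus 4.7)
Since $|x-z|\geq d_z$ for every $x\in D$ and $z\in \overline{D}^c$, one has $|x-z|^{-d}\leq d_z^{-d}$, so pulling this out reduces \eqref{23.05.07.1603} to proving
$$
J(t) := \int_D \left(1\wedge \frac{d_x^{\alpha/2}}{\sqrt{t}}\right)^{\nu_0} d_x^{\nu_1\alpha/2}\,dx \leq C(t^{-\nu_0/2}+1), \qquad t>0.
$$
The plan is to split $D = D_1 \cup D_2$ with $D_1:=\{x\in D: d_x \leq t^{1/\alpha}\}$ and $D_2:=\{x\in D: d_x > t^{1/\alpha}\}$. On $D_1$ the cutoff $(1\wedge d_x^{\alpha/2}/\sqrt t)^{\nu_0}$ equals $t^{-\nu_0/2} d_x^{\nu_0\alpha/2}$, so the integrand becomes $t^{-\nu_0/2} d_x^{(\nu_0+\nu_1)\alpha/2}$; on $D_2$ the cutoff equals $1$, and I would regroup $d_x^{\nu_1\alpha/2} = d_x^{(\nu_0+\nu_1)\alpha/2}\cdot d_x^{-\nu_0\alpha/2}$. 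Both reductions are deliberately chosen to isolate the power $d_x^{(\nu_0+\nu_1)\alpha/2}$ so that the joint hypothesis $\nu_0+\nu_1 > -2/\alpha$ can be invoked.

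That hypothesis makes the exponent $(\nu_0+\nu_1)\alpha/2$ strictly greater than $-1$, so $d_x^{(\nu_0+\nu_1)\alpha/2}$ is integrable near $\partial D$. Combined with the standard boundary-strip estimate $|\{x\in D:d_x\leq r\}|\leq C r$ (a consequence of Lemma \ref{aux lem 2} via a finite covering of $\partial D$), this yields
$$
\int_{D_1} d_x^{(\nu_0+\nu_1)\alpha/2}\,dx \leq C\bigl(t^{(\nu_0+\nu_1)/2+1/\alpha}\wedge 1\bigr), \qquad \int_D d_x^{(\nu_0+\nu_1)\alpha/2}\,dx \leq C,
$$
the $\min$ accounting for the fact that once $t^{1/\alpha}\geq\mathrm{diam}(D)$ the strip $D_1$ fills $D$.

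For the $D_2$ piece, the key point is that $d_x>t^{1/\alpha}$ forces $d_x^{-\nu_0\alpha/2}\leq t^{-\nu_0/2}$ when $\nu_0\geq 0$ and $d_x^{-\nu_0\alpha/2}\leq \mathrm{diam}(D)^{-\nu_0\alpha/2}$ when $\nu_0<0$; in either case it is majorized by $C(1+t^{-\nu_0/2})$, so after combining with $\int_D d_x^{(\nu_0+\nu_1)\alpha/2}\,dx\leq C$ the $D_2$-contribution is directly bounded by $C(t^{-\nu_0/2}+1)$. The $D_1$-contribution comes out as $C(t^{\nu_1/2+1/\alpha}\wedge t^{-\nu_0/2})$. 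To verify this is $\leq C(t^{-\nu_0/2}+1)$, I would split $t\geq 1$ (where the minimum already gives $t^{-\nu_0/2}$) and $t\leq 1$ (where the term $t^{\nu_1/2+1/\alpha}$ is bounded by $1$ when $\nu_1/2+1/\alpha\geq 0$, and by $t^{-\nu_0/2}$ when $\nu_1/2+1/\alpha<0$, the latter step using $(\nu_0+\nu_1)/2+1/\alpha>0$, which is exactly the hypothesis). Reinserting the factor $d_z^{-d}$ completes the proof.

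The main obstacle is the regrouping itself: because the assumption controls only $\nu_0+\nu_1$, neither $d_x^{\nu_0\alpha/2}$ nor $d_x^{\nu_1\alpha/2}$ is individually manageable, and on each subregion one has to rewrite the integrand so that only the joint exponent $(\nu_0+\nu_1)\alpha/2$ sits inside the boundary-strip integral. A secondary (but unavoidable) annoyance is the sign-analysis of $\nu_1/2+1/\alpha$ and the $t\lessgtr 1$ dichotomy in the final comparison; that is precisely where the strictness of the hypothesis enters.
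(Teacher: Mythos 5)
Your proof is correct and follows essentially the same route as the paper's: pull the factor $|x-z|^{-d}\le d_z^{-d}$ out of the integral and reduce everything to the boundary-distance estimate $\int_D d_x^{\lambda}\,dx<\infty$ for $\lambda>-1$ (Lemma \ref{aux lem 2}), with the hypothesis $\nu_0+\nu_1>-2/\alpha$ entering through the exponent $(\nu_0+\nu_1)\alpha/2>-1$. The only cosmetic differences are that you resolve the cutoff by splitting $D$ at $d_x=t^{1/\alpha}$ whereas the paper uses the pointwise bounds $(1\wedge a)^{\nu_0}\le a^{\nu_0}$ for $\nu_0\ge0$ and $(1\wedge a)^{\nu_0}\le 1+a^{\nu_0}$ for $\nu_0<0$, and that your final case analysis in $t$ is unnecessary, since the $D_1$-contribution $t^{-\nu_0/2}\bigl(t^{(\nu_0+\nu_1)/2+1/\alpha}\wedge 1\bigr)$ is trivially $\le t^{-\nu_0/2}$.
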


\begin{proof}
Note first that $|x-z|\geq d_z$ for $x\in \cO$ and $z\in \overline{\cO}^c$. Thus, by \eqref{21.09.20.1850} and \eqref{aux dom assu}, one can easily show that if $\nu_0\geq0$, then
\begin{align*} 
\int_{\cO} |x-z|^{-d}\left( 1 \wedge \frac{d_x^{\alpha/2}}{\sqrt{t}} \right)^{\nu_0} d_x^{\nu_1\alpha/2} dx &\leq N d_z^{-d} \int_{\cO} d_x^{(\nu_0+\nu_1)\alpha/2} t^{-\nu_0/2} dx \leq N d_z^{-d}t^{-\nu_0/2}.
\end{align*}
Therefore, it remains to prove \eqref{23.05.07.1603} when $\nu_0<0$. In this case, we have $\nu_1>-\frac{2}{\alpha}$ and thus \eqref{21.09.20.1850} yields
\begin{align*} 
&\int_{\cO} |x-z|^{-d}\left( 1 \wedge \frac{d_x^{\alpha/2}}{\sqrt{t}} \right)^{\nu_0} d_x^{\nu_1\alpha/2} dx 
\\
&\leq N d_z^{-d} \int_{\cO} d_x^{(\nu_0+\nu_1)\alpha/2} t^{-\nu_0/2} dx + N d_z^{-d} \int_{\cO} d_x^{\nu_1 \alpha/2} dx \leq N d_z^{-d}(t^{-\nu_0/2}+1).
\end{align*}
\end{proof}

\begin{lem} \label{aux lem est}
(i) Let $-1<\nu_0<\nu_1$. Then, for $x\in \cO$,
\begin{align} \label{aux ineq 1}
\int_{\overline{\cO}^c} d_z^{\nu_0} |x-z|^{-d-\nu_1} dz \leq N d_x^{\nu_0-\nu_1},
\end{align}
where $N=N(d,\cO,\nu_0,\nu_1)$.

(ii) If $\cO$ is bounded, and $-1<\nu_0<\nu_1$, then for $z\in \overline{\cO}^c$,
\begin{align} \label{aux ineq 2}
\int_{\cO} d_x^{\nu_0} |x-z|^{-d-\nu_1} dx \leq N d_z^{\nu_0-\nu_1} (1+d_z)^{-d-\nu_0},
\end{align}
where $N=N(d,\cO,\nu_0,\nu_1)$.
\end{lem}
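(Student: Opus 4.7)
My plan is to estimate both integrals by a dyadic decomposition of the domain of integration centered at the singularity of $|x-z|^{-d-\nu_1}$, using Lemma \ref{aux lem 2} to control the mass of the weight $d^{\nu_0}$ on balls anchored at boundary points. The basic geometric fact driving everything is that $|x-z| \geq d_x \vee d_z$ whenever $x\in D$ and $z\in \overline{D}^c$, because $B_{d_x}(x)\subset D$ and $B_{d_z}(z)\subset \overline{D}^c$; this tells us the singularity is separated from the boundary at the correct scale.

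For part (i), fix $x\in D$ and let $x_0\in\partial D$ be a closest boundary point, so $|x-x_0|=d_x$. I would decompose
\[
\overline{D}^c \;=\; \bigsqcup_{k\geq 0} A_k, \qquad A_k := \bigl\{ z\in \overline{D}^c : 2^k d_x \leq |x-z| < 2^{k+1} d_x \bigr\},
\]
noting that no $z\in\overline{D}^c$ lies closer than $d_x$ to $x$. On $A_k$ one has $|x-z|^{-d-\nu_1}\leq C(2^k d_x)^{-d-\nu_1}$, and the triangle inequality gives $A_k\subset B_{2^{k+2}d_x}(x_0)$, so Lemma \ref{aux lem 2} (whose hypothesis $\nu_0>-1$ is exactly what is assumed) furnishes $\int_{A_k} d_z^{\nu_0}\,dz \leq C(2^k d_x)^{d+\nu_0}$. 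Combining,
\[
\int_{A_k} d_z^{\nu_0} |x-z|^{-d-\nu_1}\,dz \;\leq\; C\, 2^{k(\nu_0-\nu_1)} d_x^{\nu_0-\nu_1},
\]
and summing the geometric series in $k$ (which converges thanks to the hypothesis $\nu_0<\nu_1$) yields the claimed $C d_x^{\nu_0-\nu_1}$.

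For part (ii), I would split according to the size of $d_z$ relative to $\mathrm{diam}(D)$. In the far regime $d_z \geq \mathrm{diam}(D)$, one has $|x-z|\approx d_z$ uniformly for $x\in D$ and $(1+d_z)^{-d-\nu_0}\approx d_z^{-d-\nu_0}$, so the bound reduces to $\int_D d_x^{\nu_0}\,dx\leq C$; this follows by covering $D$ with finitely many balls and using Lemma \ref{aux lem 2} on those meeting $\partial D$ together with the trivial interior bound. In the near regime $d_z<\mathrm{diam}(D)$, the factor $(1+d_z)^{-d-\nu_0}$ is comparable to $1$, and I would mirror the construction from part (i): with $x_0\in\partial D$ closest to $z$, set $B_k:=\{x\in D : 2^k d_z \leq |x-z| < 2^{k+1} d_z\} \subset B_{2^{k+2}d_z}(x_0)$, and apply Lemma \ref{aux lem 2} to obtain the same per-annulus bound $C\, 2^{k(\nu_0-\nu_1)} d_z^{\nu_0-\nu_1}$.

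The subtlest step, and the main obstacle, is the termination of the dyadic sum in part (ii): the boundedness of $D$ forces $B_k$ to become empty once $2^k d_z \gtrsim \mathrm{diam}(D)$, capping the relevant $k$ at $K\sim \log_2(\mathrm{diam}(D)/d_z)$. One must then handle this cutoff carefully so that the finite geometric sum lands on the clean exponent $\nu_0-\nu_1$ appearing on the right-hand side, paying attention to the interplay with the sign of $\nu_0-\nu_1$ in the parameter range where the lemma is invoked in the paper (cf.\ its use in the proof of Lemma \ref{lem out}, where the constraint $\upsilon p>-\theta+d-\alpha p/2$ translates precisely into $\nu_0<\nu_1$).
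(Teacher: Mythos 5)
Your proof is correct and takes a genuinely different route from the paper's. The paper proves $(i)$ by an explicit change-of-variables computation on the half space (reducing matters to $\int_{-\infty}^0(-z^1)^{\nu_0}(1-z^1)^{-1-\nu_1}\,dz^1<\infty$) and then, for bounded $D$, covers $\partial D$ by finitely many balls, transports the half-space estimate through $C^{1,1}$ flattening maps, and treats the far region $\{d_z\geq R/4\}$ separately; part $(ii)$ is handled the same way. Your dyadic decomposition into annuli anchored at the nearest boundary point, combined with Lemma \ref{aux lem 2}, replaces all of this with a single uniform argument that works verbatim for the half space and the bounded case and never invokes a flattening map. The only thing you must check is that Lemma \ref{aux lem 2} is available for all radii $r$, including $r\gg \mathrm{diam}(D)$ in part $(i)$ where $\overline{D}^c$ is unbounded; this does hold (for bounded $D$ one has $d_z\approx|z-x_0|$ once $|z-x_0|\gtrsim \mathrm{diam}(D)$, and $d+\nu_0>0$), and the lemma is stated with a constant independent of $r$. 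Your approach is the more economical of the two.

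The sign issue you flag in part $(ii)$ is real, but it is a defect of the statement rather than of your argument: for small $d_z$ the bound $Cd_z^{\nu_0-\nu_1}$ can only hold if $\nu_0<\nu_1$, since the contribution of $\{x\in D: |x-z|\approx \mathrm{diam}(D)\}$ to the left-hand side is bounded below by a positive constant. The paper's own proof tacitly uses $\nu_0<\nu_1$ in exactly the same places (the half-space integral above, and the identity $\int_{|x-z|>d_z}|x-z|^{-d+\nu_0-\nu_1}\,dx=Cd_z^{\nu_0-\nu_1}$ in the estimate of $II(x)$). In every application either $\nu_0<\nu_1$ holds, as you observe for Lemma \ref{lem out}, or only the regime $d_z>\mathrm{diam}(D)$ is used (Lemma \ref{lem zeroth ell}$(ii)$), where your far-regime argument gives the bound for any $\nu_0>-1$. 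So under $\nu_0<\nu_1$ your finite geometric sum is simply dominated by the convergent infinite one and no further care at the cutoff is needed.
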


\begin{proof}
$(i)$ We first prove \eqref{aux ineq 1}.

\textbf{1.} First, assume $\cO$ is a half space. By the change of variables $z\to x^1z$,
\begin{align} \label{22.05.31.1400}
\int_{\overline{\cO}^c} d_z^{\nu_0} |x-z|^{-d-\nu_1} dz &= \int_{\overline{\cO}^c} (-z^1)^{\nu_0} |x^1 e_1-z|^{-d-\nu_1} dz \nonumber
\\
&= N(d) (x^1)^{\nu_0-\nu_1} \int_{\overline{\cO}^c} (-z^1)^{\nu_0} |e_1-z|^{-d-\nu_1} dz.
\end{align}
Here, using the change of variables $z'\to(1-z^1)z'$,
\begin{align*}
\int_{\overline{\cO}^c} (-z^1)^{\nu_0} |e_1-z|^{-d-\nu_1} dz &= 
n(d) \int_{\overline{\cO}^c} (-z^1)^{\nu_0} (1-z^1)^{-1-\nu_1} |(1,z')|^{-d-\nu_1} dz^1 dz' 
\\
&\leq \int_{-\infty}^0 (-z^1)^{\nu_0} (1-z^1)^{-1-\nu_1} dz^1 <\infty.
\end{align*}
This and \eqref{22.05.31.1400} yield \eqref{aux ineq 1} when $\cO$ is a half space.

\textbf{2.} Let $\cO$ be a bounded $C^{1,1}$ open set. Since $\cO$ is bounded, for any $R>0$, there exists $x_1,\dots,x_n\in\partial \cO$ such that if $y\in (\cup_{i=1}^n B_{R/2}(x_i))^c$ then $d_y>R/4$.
Thus,
\begin{align*}
\int_{\overline{\cO}^c} d_z^{\nu_0} |x-z|^{-d-\nu_1} dz &\leq \sum_{i=1}^n \int_{\overline{\cO}^c \cap B_{R/2}(x_i)} d_z^{\nu_0} |x-z|^{-d-\nu_1} dz \nonumber
\\
&\quad+ \int_{\cO_{R/4}} d_z^{\nu_0} |x-z|^{-d-\nu_1} dz =: \sum_{i=1}^n I_i(x) + II(x),
\end{align*}
where $\cO_S:=\{z\in \overline{\cO}^c : d_z \geq S\}$ for $S>0$.

$(1)$ We estimate $I_i(x)$ for fixed $i\in\{1,2,\dots,n\}$.

First, if $x\in B_R(x_i) \cap \cO$, then by considering $C^{1,1}$-bijective flattening boundary map and using the case $\cO=\bR_+^d$, one can easily estimate $I_i$.

Next, assume $x\in \cO\setminus B_R(x_j)$ for some $i\neq j$. In this case, take $\tilde{x}\in \cO \cap B_R(x_i)$ such that $d_x\leq N(\cO,R) d_{\tilde{x}}$. Thus, using \eqref{eq8152005}, we have
\begin{align*} 
I_i(x) \leq N \int_{\overline{\cO}^c\cap B_{R/2}(x_i)} d_z^{\nu_0} |\tilde{x}-z|^{-d-\nu_1} dz \leq N d_{\tilde{x}}^{\nu_0-\nu_1} \leq N d_{x}^{\nu_0-\nu_1}.
\end{align*}

$(2)$  
We now estimate $II(x)$. 

Let $z\in \cO_{R/4}$ and take $y\in \partial \cO$ such that $d_z=|y-z|$. Then, by \eqref{eq8151621},
\begin{align*}
II(x) \leq N \int_{\cO_{R/4}} |x-z|^{-d+\nu_0-\nu_1} dz \leq N \int_{|x-z|>d_x} |x-z|^{-d+\nu_0-\nu_1} dz = N d_x^{\nu_0-\nu_1}.
\end{align*}

$(ii)$ Now we prove \eqref{aux ineq 2}.
Due to \eqref{aux ineq 1}, we only need to prove \eqref{aux ineq 2} when $d_z>1$. In this case,
\begin{align*}
\int_{\cO} d_x^{\nu_0} |x-z|^{-d-\nu_1} dx &\leq d_z^{-d-\nu_1} \int_{\cO} d_x^{\nu_0} dx \leq N d_z^{-d-\nu_1} 
\\
&= N d_z^{\nu_0-\nu_1} d_z^{-d-\nu_0} \leq N d_z^{\nu_0-\nu_1} (1+d_z)^{-d-\nu_0}.
\end{align*}
Here, the second inequality  easily follows from \eqref{21.09.20.1850}. The lemma is proved.

\end{proof}

\end{document}